\newtheorem{prop}{Proposition}[section]
\newtheorem{cor}{Corollary}[section]
\newtheorem{lem}{Lemma}[section]
\renewcommand*{\NAT@spacechar}{~}
\newbox\xdottedarrow@box
\newcommand*\xdottedarrow
\newcommand*\xdottedarrow@t[1][1.5em]
\newcommand*\xdottedarrow@m[1][]
\relax\detokenize{#1}\relax
\newbox\xdotdasharrow@box
\newcommand*\xdotdasharrow
\newcommand*\xdotdasharrow@t[1][1.5em]
\newcommand*\xdotdasharrow@m[1][]
\relax\detokenize{#1}\relax
\newbox\xdasharrow@box
\newcommand*\xdasharrow
\newcommand*\xdasharrow@t[1][1.5em]
\newcommand*\xdasharrow@m[1][]
\relax\detokenize{#1}\relax
\newbox\xregarrow@box
\newcommand*\xregarrow
\newcommand*\xregarrow@t[1][1.5em]
\newcommand*\xregarrow@m[1][]
\relax\detokenize{#1}\relax
\title{The TSP with drones: The benefits of retraversing the arcs}
\author[]{Nicola Morandi\textsuperscript{a,$\star$}, Roel Leus\textsuperscript{a}, Jannik Matuschke\textsuperscript{b}, Hande Yaman\textsuperscript{a}}
\affil[]{\textsuperscript{a}\small Research Centre for Operations Research and Statistics, KU Leuven}
\affil[]{\textsuperscript{b}Research Centre for Operations Management, KU Leuven}
\affil[]{\textsuperscript{$\star$}nicola.morandi@kuleuven.be}
\date{}
\begin{document}

\renewcommand{\lpindent}{\hspace{0pt}}

\maketitle

\begin{abstract}
\footnotesize In the Traveling Salesman Problem with Drones (TSP-mD), a truck and multiple drones cooperate to serve customers in the minimum amount of time. The drones are launched and retrieved by the truck at customer locations, and each of their flights must not consume more energy than allowed by their batteries. Most problem settings in the literature restrict the feasible truck routes to cycles, i.e., closed paths, which never visit a node more than once. \emph{Revisiting} a node, however, may lower the time required to serve all the customers. Additionally, we observe that optimal solutions for the \mbox{TSP-mD} may \emph{retraverse} arcs, i.e., optimal truck routes may contain the same arcs multiple times. We refer to such solutions as \emph{arc-retraversing}, and include them in our solution space by modeling the truck route as a closed walk. We describe Euclidean instances where all the optimal solutions are arc-retraversing. The necessity of arc retraversals does not seem to have been investigated in previous studies, and those that allow node revisits seem to assume that there always exists an optimal solution without arc retraversals. We prove that under certain conditions, which are commonly met in the literature, this assumption is correct. When these conditions are not met, however, excluding arc-retraversing solutions might result in an increase of the optimal value; we identify cases where \emph{a priori} and \emph{a posteriori} upper bounds hold on such increase. Finally, we prove that there is no polynomial-time heuristic that can approximate the metric TSP-mD within a constant factor, unless P=NP. We identify a (non-constant) approximation factor explicitly when the truck can visit all the nodes.
\end{abstract}

\section{Introduction}
Since the pioneering introduction of the Flying Sidekick Traveling Salesman Problem (FSTSP) by \cite{murraychu15}, the scientific literature about applications of drones to routing and parcel delivery has grown at a remarkable pace. At the time of writing, searching the words ``truck drone routing'' by Google Scholar produced $19\, 200$ results, $20\%$ of which from just 2021. An impressive number of surveys on the topic already appeared, e.g., those by \cite{macrina20} and \cite{otto18review}. The interest in drones' applications also comes from public institutions and the private sector. The European Commission forecasts more than $100\, 000$ people employed and an economic impact of over $10$ billion Euros per year in the European drone sector by 2035 \citep{EUcomm}. At the same time, the e-commerce multinational Amazon obtained the approval from the relevant US authority for its Prime Air service ``beyond visual line of sight'' \citep{Amazon}. From the algorithmic point of view, the FSTSP and its generalizations can model a wide range of routing applications with cooperating vehicles, which are not necessarily drones; in principle, any vehicle with limited fuel, or traveling person with limited payload capacity, could play the role of the drone in these problems.

In the FSTSP\footnote{There are numerous similar problem variants studied in literature under the name FSTSP or TSP with a Drone (TSP-D). The generic setting we outlined here under the name FSTSP is equivalent to the one studied by \citet{agatz18} and \citet{roberti21}.}, a truck and a drone cooperate to visit all the customers in a given network in the minimum amount of time.
The drone can only serve one customer per sortie (i.e., drone flight). 
A natural generalization of FSTSP is the TSP with Drones (\mbox{TSP-mD}), where multiple drones are allowed to serve multiple customers per sortie, with the length of each sortie bounded by a limited battery capacity. 
Most studies of FSTSP and \mbox{TSP-mD} impose the additional constraint that the truck cannot visit customers multiple times, with the notable exception of the work by \citet{agatz18}, \cite{bouman18}, and \cite{tang19}.
As pointed out by \citet{roberti21}, allowing such \emph{revisits} poses additional computational challenges which cannot be easily accommodated by many of the existing approaches in literature. Indeed, by adding sufficiently many copies of the nodes to the underlying graph, one can always reduce revisiting truck routes to cycles; this approach, however, does not appear to be computationally viable in practice.
\citet{roberti21} also state that an analysis of the cost savings by revisits had, at the time of their writing, not been conducted yet.

In this paper, we observe that optimal solutions for \mbox{TSP-mD} may not only need to revisit nodes, but also \emph{retraverse} arcs of the underlying directed graph, i.e., in the course of its tour the truck might need to repeatedly travel directly from customer $i$ to customer $j$ for some fixed pair of customers $i, j$. As we show, excluding such \emph{arc-retraversing} solutions can lead to a significant increase in the optimal value.
The necessity of arc retraversals does not seem to have been investigated in previous studies, and those studies that allow node revisits seem to operate under the assumption that there always exists an optimal solution without retraversals.
In fact, the integer programming (IP) formulation proposed in the seminal paper by \citet{agatz18} for FSTSP with node revisits implicitly excludes certain arc-retraversing solutions.
We prove that this implicit assumption is correct under specific conditions, which the FSTSP setting studied by \citet{agatz18} meets. 
However, when these conditions are not satisfied (e.g., when allowing multiple customers to be visited by a single sortie), the optimal value might increase significantly when excluding arc-retraversing solutions. 
We provide asymptotically tight \emph{a priori} (i.e., solution-independent) and \emph{a posteriori} (i.e., solution-dependent) upper bounds on such increase.
In particular, the optimal value can increase by a factor of at most $1+2m$ (where $m$ is the number of drones) in the worst case when excluding arc-retraversing solutions. The same worst-case increase holds when excluding node-revisits, giving a partial answer to the question raised by \cite{roberti21}.
Finally, we provide an approximation algorithm whose approximation guarantee depends on the speed and the number of available drones, and show that unless P=NP, no approximation algorithm can obtain a guarantee that does not depend on these two parameters.

The remainder of the paper is organized as follows. A review of relevant 
work is presented in Section~\ref{sec_literature}, while the TSP-mD itself is formally defined in Section~\ref{sec_prob_descr}. In Section~\ref{sec_arcretrav}, we describe problem settings where it suffices to consider solutions that are not arc-retraversing. We establish the aforementioned upper bounds on the increase of the optimal value in Section~\ref{sec_notincluding}. Finally, Section~\ref{sec_approx} contains the proofs of our approximability results. We conclude with Section~\ref{sec_conclusion}.
To prove the results of Section~\ref{sec_arcretrav}, we solve a number of instances via an MILP formulation; we describe the relevant instances and provide the MILP formulation in Appendices~\ref{app_inst} and \ref{app_model}, respectively.

\section{Related literature}
\label{sec_literature}
In this section, we review the drone routing literature that specifically addressed truck-drone(s) operation problems with exact methods. We further focus on settings with a single truck that visits or delivers to customers in parallel to the drone, and where the completion time is minimized.

For a fast entry point to the literature, not exclusively from the operations research perspective, we refer the reader to the surveys (in alphabetical order) by \cite{boysen21survey}, \cite{ding21survey}, \cite{li21survey}, \cite{macrina20survey}, \cite{merkert20survey}, \cite{otto18review}, \cite{persson21survey}, and \cite{roca19survey}. For surveys with a focus on drone applications to routing and parcel delivery, we refer the reader to \cite{coutinho18review}, \cite{khoufi19review}, \cite{chung20review}, \cite{macrina20}, \cite{rojas21review}, and \cite{moshref21review}, in chronological order of publication from 2018 to 2021. Finally, we mention the instructive overview of the challenges ahead for drone-aided routing provided by \cite{poikonen21}.

The FSTSP was introduced by \cite{murraychu15}. They provided an MILP model, and solved the problem via heuristic methods. A two-stage decomposition for solving the FSTSP was developed by \cite{yurek18}, by which they were able to solve instances with $12$ nodes to optimality within one hour of computations. \cite{dellamico21opletters} provided two novel formulations for the FSTSP, and further refined them in a follow-up paper \citep{dellamico21inttransopres}; they also proposed a branch-and-bound algorithm capable of solving instances with up to $19$ vertices in one hour of CPU time \citep{dellamico21omega}. Recently, \cite{freitas21} proposed a novel MILP formulation for the FSTSP, by which they solved instances with up to 10 nodes in an average of less than two minutes.

\cite{jeong19} proposed a mathematical model to solve a generalization of the FSTSP that incorporates circle-shaped no-fly zones for drones, and parcel weights. They reported exact solutions for instances with up to 10 customers. \cite{luo19}, \cite{gonzalez20}, and \cite{ha21} tackled variants of the FSTSP where sorties can contain multiple customers; the first authors tailored their variant to traffic patrolling applications. \cite{vasquez21} and \cite{boccia21} considered a variant of the FSTSP that further allows \emph{loops}, i.e., sorties with the same launch and landing location. They both solved instances with up to $20$ vertices in a reasonable amount of time, the former by a Benders decomposition and the latter via branch-and-cut.
In a follow-up paper, \cite{boccia21C} also solved the FSTSP by combining a branch-and-cut with a column generation procedure. \cite{jeon21} introduced a variant where both delivery and pick-up demands are met by allowing every sortie to visit one delivery and one pick-up location (in this order) before landing on the truck. They solved instances with up to nine customers via an MILP model, within 30 minutes of CPU time on average.

\cite{agatz18} introduced a variant of the FSTSP where the drone can perform loop sorties and the truck can revisit a customer. They named this variant TSP with a Drone (\mbox{TSP-D}), and proposed a model that contains a large number of binary variables, one for each feasible truck-drone joint operation. The same authors devised a dynamic programming approach in \cite{bouman18}, and solved instances with up to 16 nodes, within three hours of computations on average. \cite{tang19} proposed a constraint programming approach for the \mbox{TSP-D}, and solved instances with up to $18$ nodes, in an average of less than 10 minutes of computations. To the best of our knowledge, these three papers are the only ones in the literature that solved a variant of the FSTSP with a single truck and node revisits. In particular, \cite{agatz18} is the only study that proposed an IP formulation for the problem; in Section~\ref{sec_prob_descr}, we describe arc-retraversing solutions that are not allowed by this formulation.

There is no general consensus in the literature on whether the \mbox{TSP-D} should allow node revisits, as per the original definition in \cite{agatz18}. In the remainder of this section, we classify the references by their own definition of the \mbox{TSP-D}. Some studies, e.g., \cite{schermer20} and \cite{eladle21}, opted for excluding node revisits in the \mbox{TSP-D}. The former proposed MILP formulations capable of directly solving instances with up to $10$ customers within one hour of CPU time, and up to $20$ customers when embedded in a branch-and-cut algorithm.  The latter provided an MILP model that solved instances with up to $24$ nodes. \cite{zhu22} tackled a variant of the \mbox{TSP-D} where the truck is also an electric vehicle, and must visit recharge stations periodically. Their electric truck is only allowed to revisit the locations corresponding to recharging stations. They developed a branch-and-price algorithm by which they solved instances with up to 10 nodes in one hour. Finally, \cite{roberti21} proposed a branch-and-price algorithm to effectively solve \mbox{TSP-D} instances with up to $39$ nodes within one hour of CPU time.

The FSTSP was soon generalized to the multiple drones case. In 2019, \cite{seifried19} proposed an MILP model based on vehicle flows. \cite{murray20} solved instances with up to eight nodes via an MILP model within one hour of computations. In their setting, the launch and retrieval times for the drones are not negligible. \cite{dellamico21networks} tackled a further variant where the drones are allowed to wait for the truck by hovering, and their retrieval gives rise to a nested scheduling problem at any customer location. They provided four formulations and solved instances with 10 customers to optimality in one hour. \cite{jeong19conf} and \cite{luo21} further allowed multiple customers in a single sortie, and solved instances with up to 10 customers in a reasonable amount of time. The latter further took the drones' payload into account in the battery energy consumption. \cite{cavani21} identified a number of symmetry-breaking and valid inequalities, and solved instances with up to $25$ nodes to optimality via branch-and-cut, with a time limit of two hours.

For sake of completeness, we also mention a number of studies on the further generalization to the multiple trucks case: among other ones, \cite{kitjacharoenchai19}, \cite{bakir20}, \cite{tamke21}, and \cite{zhou22}. From the theoretical side, \cite{wang17} provided several worst-case bounds on the optimal value, involving the number of vehicles and their relative speed.

\section{Problem statement}
\label{sec_prob_descr}
The TSP-mD can be defined as follows. Let $N$ be the set of nodes including the customer locations and the depot (denoted by $0$), and $A$ be the set of arcs $\left( i,j\right)$ for any pair of distinct nodes $i,j\in N$. The resulting graph $G=\left( N,A \right)$ is complete and directed. A non-capacitated truck and $m$ identical drones cooperate to serve all the customers in $N$. We denote by $N^{\mathit{dr}}\subseteq N$ and $N^{\mathit{tr}}\subseteq N$ the subsets of the locations that can be served by the drones and by the truck, respectively. The depot $0$ belongs to $N^{\mathit{tr}}$, and $N^{\mathit{dr}}\cup N^{\mathit{tr}}=N$. Notice that if a node $i\in N\smallsetminus N^{\mathit{tr}}$, then the truck cannot traverse any of the arcs that are incident to $i$.

We associate two distinct metrics $\ell$ and $\ell'$ with the arcs in $A$, representing the time it takes for the truck and for the drones, respectively, to traverse the arcs. In particular, by defining $\ell$ and $\ell'$ as metrics, we implicitly require that they are symmetric, i.e., $\ell_{ij}=\ell_{ji}$ and $\ell'_{ij}=\ell'_{ji}$ for all $\left( i,j\right)\in A$. 
We assume without loss of generality that the truck travels at unit speed: therefore, $\ell_{ij}$ also measures the length of arc $\left( i,j\right)$, for every $\left( i,j\right)\in A$. We denote the maximum speedup of the drones compared to the truck by
\begin{equation}
\alpha = \max\!\left\lbrace \nicefrac{\ell_{ij}}{\ell'_{ij}}\; :\; \left( i, j\right)\in A,\;\ell'_{ij}>0\text{, and }i,j\in N^{\mathit{dr}}\cap N^{\mathit{tr}}\right\rbrace.
\end{equation}
When it further holds that $\ell_{ij}=\alpha\cdot\ell'_{ij}$ for all distinct $i,j\in N^{\mathit{dr}}\cap N^{\mathit{tr}}$, we say that the two metrics are \emph{proportional}.

The truck route consists of a closed walk that starts and ends at $0$, and serves all the nodes contained therein. The drones can be independently launched onto airborne routes, hereafter referred to as \emph{sorties}, and retrieved by the truck at the nodes on its route. Accordingly, we represent a sortie~$\pi$ by a tuple of nodes $\left( i_{1}, \dots, i_{r}\right)$, with $r\geq 3$, where $i_{1}, i_{r}\in N^{\mathit{tr}}$ and $i_{2}, \ldots, i_{r-1}\in N^{\mathit{dr}}$. The nodes $i_{1}$ and $i_{r}$ represent the starting and ending nodes of $\pi$, respectively. The set $\left\lbrace i_{2}, \ldots, i_{r-1}\right\rbrace$, which we denote by $N\!\left[ \pi\right]$, represents the nodes that are served by the drone while flying along $\pi$. The drone can serve multiple customers in a single sortie. The amount of energy required to perform a sortie $\pi=\left( i_{1}, \dots, i_{r}\right)$ is given by
\begin{equation}
\label{energy_consumption}
    w^{\mathit{dr}}\cdot\sum_{q=1}^{r-1}\ell'_{i_{q}i_{q+1}}+\sum_{p=2}^{r-1}\left(w_{i_{p}}\cdot\sum_{q=1}^{p-1}\ell'_{i_{q}i_{q+1}}\right),
\end{equation}
where $w^{\mathit{dr}}$ is the weight of a single drone, and $w_{i}\geq 0$ is the payload to serve the customer at node $i$, for every $i\in N\!\left[\pi\right]$. The energy consumed by any sortie must not exceed the maximum value $B>0$ allowed by the battery, and every time a drone lands, its battery is swapped with a fully-charged one in a negligible amount of time. We denote the set of the feasible sorties by $P$. By a slight abuse of notation, we denote by $\ell'_{\pi}$ the duration of a feasible sortie $\pi$, i.e., $\ell'_{\pi}=\sum_{q=1}^{r-1}\ell'_{i_{q}i_{q+1}}$. By setting $w_{i}=0$ for every node $i\in N^{\mathit{dr}}$ in the energy consumption~\eqref{energy_consumption}, we obtain that $\ell'_{\pi}\leq\nicefrac{B}{w^{\mathit{dr}}}$ for every $\pi\in P$; we denote this upper bound on the duration of the feasible sorties by $L$.

Notice that, because of the energy consumption \eqref{energy_consumption}, it might be infeasible to reverse the order of the nodes of a sortie, as illustrated in Figure~\ref{fig:asym_sortie}: with $w^{\mathit{dr}}=10$, 
$B=350$ and a payload $w_{C}=5$ at customer C, the energy consumption of the sortie on the left is $300 + 10\cdot 5=350\leq B$, while the one of its inverted counterpart is $300 + 20\cdot 5=400>B$. We refer to such sorties as \emph{non-invertible}. The invertibility of the sorties is a crucial property in the results of Section~\ref{sec_arcretrav}; in fact, this property is satisfied by most of the settings in the related literature.

\begin{figure}
\begin{minipage}{0.45\textwidth}
\centering
\begin{tikzpicture}[scale=0.3,transform shape]
  \tikzset{VertexStyle/.append style={scale=1.67}}
  \Vertex[x=0.0,y=0.0,L={A}]{A}
  \Vertex[x=10.0,y=0.0,L={B}]{B}
  \Vertex[x=0.0,y=5.77,L={C}]{C}
  
  \tikzstyle{LabelStyle}=[scale=1.67,fill=white,sloped]
  \tikzstyle{EdgeStyle}=[post, color=red]
  \Edge(A)(B)
  
  \tikzstyle{EdgeStyle}=[post, color=blue, dashed]
  \Edge[label=$10$](A)(C)
  \Edge[label=$20$](C)(B)
\end{tikzpicture}
\end{minipage}
\begin{minipage}{0.45\textwidth}
\centering
\begin{tikzpicture}[scale=0.3,transform shape]
  \tikzset{VertexStyle/.append style={scale=1.67}}
  \Vertex[x=0.0,y=0.0,L={A}]{A}
  \Vertex[x=10.0,y=0.0,L={B}]{B}
  \Vertex[x=0.0,y=5.77,L={C}]{C}
  
  \tikzstyle{LabelStyle}=[scale=1.67,fill=white,sloped]
  \tikzstyle{EdgeStyle}=[post, color=red]
  \Edge(B)(A)
  
  \tikzstyle{EdgeStyle}=[post, color=blue, dashed]
  \Edge[label=$20$](B)(C)
  \Edge[label=$10$](C)(A)
\end{tikzpicture}
\end{minipage}
\captionof{figure}{A couple of sorties, one being the inversion of the other. When $w^{\textit{dr}}=10$, $w_{C}=5$ and $B=350$, the sortie on the right is infeasible.}
\label{fig:asym_sortie}
\end{figure}
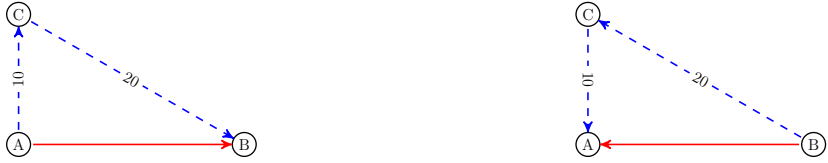

In the TSP-mD, both the truck and the drones are allowed to wait for each other at customer locations. The whole operation is complete when all the customers are served, and both the truck and the drones have returned to the depot. We minimize the completion time. Figure~\ref{fig:feas_sol} shows a feasible solution of an instance with 10 nodes (including the depot~$A$) and three drones.

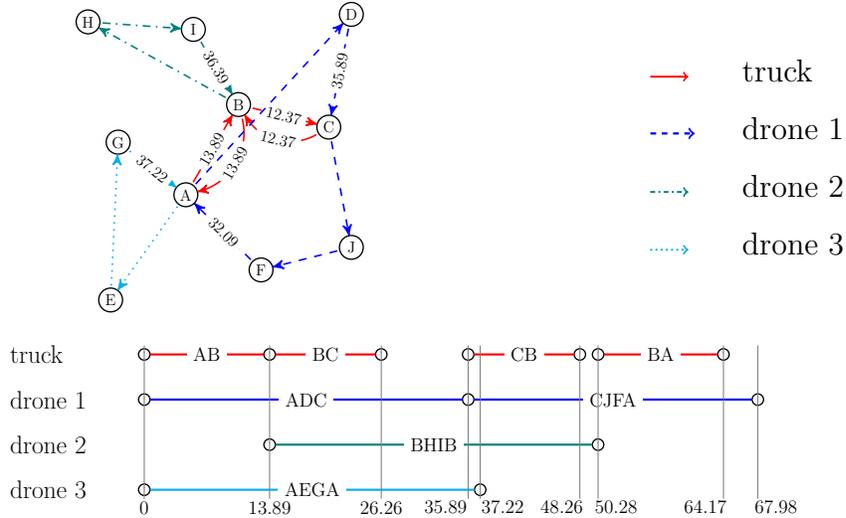
\begin{figure}
\begin{minipage}{0.7\textwidth}
\begin{center}
\begin{tikzpicture}[scale=1,transform shape]
  \tikzset{VertexStyle/.append style={scale=0.5}}
  \Vertex[x=3.0,y=4.0]{A}
  \Vertex[x=3.7,y=5.2]{B}
  \Vertex[x=4.9,y=4.9]{C}
  \Vertex[x=5.2,y=6.4]{D}
  \Vertex[x=2.0,y=2.6]{E}
  \Vertex[x=4.0,y=3.0]{F}
  \Vertex[x=2.1,y=4.7]{G}
  \Vertex[x=1.7,y=6.3]{H}
  \Vertex[x=3.1,y=6.2]{I}
  \Vertex[x=5.2,y=3.3]{J}
  
  \tikzstyle{LabelStyle}=[scale=0.5,fill=white,sloped]
  \tikzstyle{EdgeStyle}=[post, color=red]
  \Edge[label=$13.89$](A)(B)
  \Edge[label=$12.37$](B)(C)
  \tikzstyle{EdgeStyle}=[post, bend left=45, color=red]
  \Edge[label=$12.37$](C)(B)
  \Edge[label=$13.89$](B)(A)
  \tikzstyle{EdgeStyle}=[post, bend left= 45, color=blue, dashed]
  
  \tikzstyle{EdgeStyle}=[post, color=blue, dashed]
  \Edge(A)(D)
  \Edge[label=$35.89$](D)(C)
  
  \Edge(C)(J)
  \Edge(J)(F)
  \Edge[label=$32.09$](F)(A)
  
  \tikzstyle{EdgeStyle}=[post, color=cyan, dotted]
  \Edge(A)(E)
  \Edge(E)(G)
  \Edge[label=$37.22$](G)(A)
  
  \tikzstyle{EdgeStyle}=[post, color=teal, dash dot]
  \Edge(B)(H)
  \Edge(H)(I)
  \Edge[label=$36.39$](I)(B)
\end{tikzpicture}
\end{center}
\end{minipage}
\begin{minipage}{0.25\textwidth}
\begin{enumerate}
\itemsep 0.2em
\item[\textcolor{red}{$\xregarrow$}\quad] truck
\item[\textcolor{blue}{$\xdasharrow$}\quad] drone 1
\item[\textcolor{teal}{$\xdotdasharrow$}\quad] drone 2
\item[\textcolor{cyan}{$\xdottedarrow$}\quad] drone 3
\end{enumerate}
\end{minipage}
\begin{center}
\begin{tikzpicture}[scale=0.12,transform shape]

\node[font=\fontsize{80}{0}\selectfont, anchor=west] at (-15,0) {truck};
\node[font=\fontsize{80}{0}\selectfont, anchor=west] at (-15,-5) {drone 1};
\node[font=\fontsize{80}{0}\selectfont, anchor=west] at (-15,-10) {drone 2};
\node[font=\fontsize{80}{0}\selectfont, anchor=west] at (-15,-15) {drone 3};
    
  \tikzset{VertexStyle/.append style={scale=2}}
  \Vertex[L=',x=0.0,y=0.0]{A1}
  \Vertex[L=',x=0.0,y=-5.0]{A2}
  \Vertex[L=',x=0.0,y=-15.0]{A4}
  
  \Vertex[L=',x=13.89, y=0.0]{B1}
  \Vertex[L=',x=26.26, y=0.0]{C1}
  \Vertex[L=',x=35.89, y=0.0]{D1}
  \Vertex[L=',x=48.26, y=0.0]{E1}
  \Vertex[L=',x=50.28, y=0.0]{F1}
  \Vertex[L=',x=64.17, y=0.0]{G1}
  
  \Vertex[L=',x=35.89, y=-5.0]{D2}
  \Vertex[L=',x=67.98, y=-5.0]{H2}
  
  \Vertex[L=',x=13.89, y=-10.0]{B3}
  \Vertex[L=',x=50.28, y=-10.0]{F3}
  
  \Vertex[L=',x=37.22, y=-15.0]{I4}
  
  \tikzstyle{EdgeStyle}=[color=red, scale=5]
  \Edge[label=AB](A1)(B1)
  \Edge[label=BC](B1)(C1)
  \Edge[label=CB](D1)(E1)
  \Edge[label=BA](F1)(G1)
  
  \tikzstyle{EdgeStyle}=[color=blue, scale=5]
  \Edge[label=ADC](A2)(D2)
  \Edge[label=CJFA](D2)(H2)
  
  \tikzstyle{EdgeStyle}=[color=teal, scale=5]
  \Edge[label=BHIB](B3)(F3)
  
  \tikzstyle{EdgeStyle}=[color=cyan, scale=5]
  \Edge[label=AEGA](A4)(I4)
  
  \draw[gray] (0.0,1.0) -- (0.0,-16.0);
  \node[scale=5] (A) at (0.0,-17.0) {$0$};
  \draw[gray] (13.89,1.0) -- (13.89,-16.0);
  \node[scale=5] (B) at (13.89,-17.0) {$13.89$};
  \draw[gray] (26.26,1.0) -- (26.26,-16.0);
  \node[scale=5] (C) at (26.26,-17.0) {$26.26$};
  \draw[gray] (35.89,1.0) -- (35.89,-16.0);
  \node[scale=5, label={[xshift=-2.5cm, yshift=-2.0cm, scale=5]$35.89$}] (D) at (35.89,-17.0) {};
  \draw[gray] (37.22,1.0) -- (37.22,-16.0);
  \node[scale=5, label={[xshift=2.5cm, yshift=-2cm, scale=5]$37.22$}] (E) at (37.22,-17.0) {};
  \draw[gray] (48.26,1.0) -- (48.26,-16.0);
  \node[scale=5, label={[xshift=-2.0cm, yshift=-2.0cm, scale=5]$48.26$}] (F) at (48.26,-17.0) {};
  \draw[gray] (50.28,1.0) -- (50.28,-16.0);
  \node[scale=5, label={[xshift=2.0cm, yshift=-2cm, scale=5]$50.28$}] (G) at (50.28,-17.0) {};
  \draw[gray] (64.17,1.0) -- (64.17,-16.0);
  \node[scale=5, label={[xshift=-2.0cm, yshift=-2.0cm, scale=5]$64.17$}] (H) at (64.17,-17.0) {};
  \draw[gray] (67.98,1.0) -- (67.98,-16.0);
  \node[scale=5, label={[xshift=2.0cm, yshift=-2cm, scale=5]$67.98$}] (I) at (67.98,-17.0) {};
\end{tikzpicture}
\captionof{figure}{A feasible solution for an instance with 10 nodes, three drones, and proportional metrics with $\alpha = \nicefrac{4}{3}$. The routes of the truck and the drones (at the top) and the corresponding Gantt chart are shown.}
\label{fig:feas_sol}
\end{center}
\end{figure}

In line with the FSTSP variant of \cite{agatz18}, we allow the truck to visit customers multiple times. 
Note that this also opens the possibility for the truck to traverse the same arc multiple times, i.e., there may be pairs of customers $i, j$ such that the truck travels directly from customer $i$ to customer $j$ on multiple occasions throughout its tour.
We refer to solutions in which this happens as \emph{arc-retraversing}. Figure~\ref{fig:twice_sol} shows an example of an (optimal) arc-retraversing solution for an instance with five nodes and two drones. Notice that the truck can traverse the same edge $\left\lbrace i,j\right\rbrace$ twice in the two opposite directions $\left( i,j\right)$ and $\left( j,i\right)$ without retraversing the same arc. In Section~\ref{time-saving_subsection}, we describe instances where all the optimal solutions are arc-retraversing. Consequently, it is necessary to allow this type of solutions in our problem statement.

We observe that some arc-retraversing solutions are not feasible in the IP model of \cite{agatz18}. In particular, the solution space of the latter IP does not include any solutions in which the truck traverses the same arc twice when during both traversals the drone is not airborne. Indeed, the two distinct traversals of the same arc constitute identical operations and thus correspond to the same binary variable. In Section~\ref{results_subsec}, we prove that in the FSTSP model studied by \cite{agatz18}, at least one optimal solution is not arc-retraversing, and hence the formulation indeed finds an optimal solution. 
However, as soon as the FSTSP is generalized as to incorporate payloads, multiple customers per sortie, or multiple drones, excluding arc-retraversing solutions may lead to excluding all optimal solutions.

We distinguish between the TSP-mD and the two restrictions where arcs cannot be retraversed (m-CIRCUIT) and nodes cannot be revisited (m-CYCLE).
\begin{equation}
\label{relax}
\text{TSP-mD}\leq\text{m-CIRCUIT}\leq\text{m-CYCLE}\leq\text{TSP}.
\end{equation}
The definitions of these restrictions of the TSP-mD are functional to the results of Section~\ref{sec_notincluding}.

\begin{figure}
\begin{minipage}{0.7\textwidth}
\begin{center}
\begin{tikzpicture}[scale=0.85,transform shape,rotate=45]
  \tikzset{VertexStyle/.append style={scale=0.5}}
  \Vertex[x=0.0,y=0.0,L=\rotatebox{-45}{A}]{A}
  \Vertex[x=0.0,y=2.0,L=\rotatebox{-45}{B}]{B}
  \Vertex[x=2.0,y=0.0,L=\rotatebox{-45}{C}]{C}
  \Vertex[x=0.0,y=4.8,L=\rotatebox{-45}{D}]{D}
  \Vertex[x=4.8,y=0.0,L=\rotatebox{-45}{E}]{E}
  
  \tikzstyle{LabelStyle}=[scale=0.5,fill=white,sloped]
  \tikzstyle{EdgeStyle}=[post, color=red]
  \Edge[label=$14.14$](B)(C)
  
  \tikzstyle{EdgeStyle}=[post, bend left=30, color=red]
  \Edge[label=$14.14$](C)(B)
  \Edge[label=$10.00$](A)(B)
  \Edge[label=$10.00$](C)(A)
  
  \tikzstyle{EdgeStyle}=[post, bend left=45, color=red]
  \Edge[label=$14.14$](B)(C)
  
  \tikzstyle{EdgeStyle}=[post, bend left= 45, color=blue, dashed]
  \Edge[label=$21.00$](B)(D)
  \Edge(D)(B)
  
  \tikzstyle{EdgeStyle}=[post, bend left= 45, color=teal, dash dot]
  \Edge[label=$21.00$](C)(E)
  \Edge(E)(C)
\end{tikzpicture}
\end{center}
\end{minipage}
\hfill
\begin{minipage}{0.25\textwidth}
{\small
\begin{enumerate}
\item[\textcolor{red}{$\xregarrow$}\quad] truck
\item[\textcolor{blue}{$\xdasharrow$}\quad] drone 1
\item[\textcolor{teal}{$\xdotdasharrow$}\quad] drone 2
\end{enumerate}
}
\end{minipage}
\begin{center}
\begin{tikzpicture}[scale=0.12,transform shape]
\node[font=\fontsize{70}{0}\selectfont, anchor=west] at (-15,0) {truck};
\node[font=\fontsize{70}{0}\selectfont, anchor=west] at (-15,-5) {drone 1};
\node[font=\fontsize{70}{0}\selectfont, anchor=west] at (-15,-10) {drone 2};

  \tikzset{VertexStyle/.append style={scale=2}}
  \Vertex[L=',x=0.0,y=0.0]{A1}
  \Vertex[L=',x=10.0,y=0.0]{B1}
  \Vertex[L=',x=24.14,y=0.0]{C1}
  \Vertex[L=',x=38.28,y=0.0]{D1}
  \Vertex[L=',x=52.42,y=0.0]{E1}
  \Vertex[L=',x=62.42,y=0.0]{F1}
  
  \Vertex[L=',x=10.0,y=-5.0]{B2}
  \Vertex[L=',x=31.0,y=-5.0]{G2}
  
  \Vertex[L=',x=24.14,y=-10.0]{C3}
  \Vertex[L=',x=45.14,y=-10.0]{H3}
  
  \tikzstyle{EdgeStyle}=[color=red, scale=5]
  \Edge[label=AB](A1)(B1)
  \Edge[label=BC](B1)(C1)
  \Edge[label=CB](C1)(D1)
  \Edge[label=BC](D1)(E1)
  \Edge[label=CA](E1)(F1)
  
  \tikzstyle{EdgeStyle}=[color=blue, scale=5]
  \Edge[label=BDB](B2)(G2)
  
  \tikzstyle{EdgeStyle}=[color=teal, scale=5]
  \Edge[label=CEC](C3)(H3)
  
  \draw[gray] (0.0,1.0) -- (0.0,-11.0);
  \node[scale=5] (A) at (0.0,-12.0) {$0$};
  \draw[gray] (10.0,1.0) -- (10.0,-11.0);
  \node[scale=5] (B) at (10.0,-12.0) {$10.00$};
  \draw[gray] (24.14,1.0) -- (24.14,-11.0);
  \node[scale=5] (C) at (24.14,-12.0) {$24.14$};
  \draw[gray] (31.0,1.0) -- (31.0,-11.0);
  \node[scale=5] (D) at (31.0,-12.0) {$31.00$};
  \draw[gray] (38.22,1.0) -- (38.28,-11.0);
  \node[scale=5] (E) at (38.28,-12.0) {$38.28$};
  \draw[gray] (45.14,1.0) -- (45.14,-11.0);
  \node[scale=5] (F) at (45.14,-12.0) {$45.14$};
  \draw[gray] (52.42,1.0) -- (52.42,-11.0);
  \node[scale=5] (G) at (52.42,-12.0) {$52.42$};
  \draw[gray] (62.42,1.0) -- (62.42,-11.0);
  \node[scale=5] (H) at (62.42,-12.0) {$62.42$};
\end{tikzpicture}
\captionof{figure}{An optimal solution of an instance with five nodes, $L=28.0$, and proportional metrics with $\alpha=\nicefrac{4}{3}$. The truck traverses arc $\left( B,C\right)$ twice.}
\label{fig:twice_sol}
\end{center}
\end{figure}
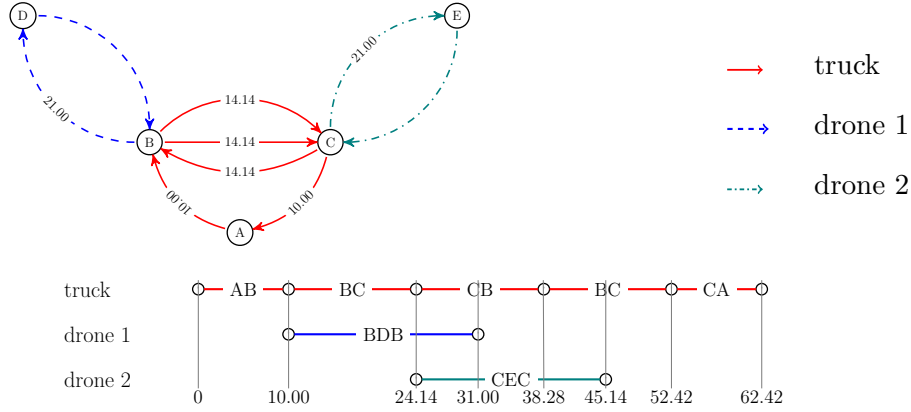

\section{Arc-retraversing solutions}
\label{sec_arcretrav}
In this section, we describe instances for which all the optimal solutions are arc-retraversing (Section~\ref{time-saving_subsection}), and provide conditions under which there always exists an optimal solution that is not arc-retraversing (Section~\ref{results_subsec}).

\subsection{Necessity of arc-retraversing solutions}
\label{time-saving_subsection}
We prove that it is necessary to include arc-retraversing solutions in the solution space of the \mbox{TSP-mD} by describing instances whose optimal solutions are all arc-retraversing. We show two instances that possess this property, with Euclidean metrics and all-zero payloads. 
The first instance has a single drone (Proposition~\ref{prop_retrav1}), while the second one satisfies $N=N^{\mathit{tr}}=N^{\mathit{dr}}$ (Proposition~\ref{prop_retrav2}). The proofs of the corresponding results require us to solve the TSP-mD a number of times.
In the Appendix~\ref{app_model} of this paper, we provide the MILP model by which we solve the relevant instances.

\begin{prop}
\label{prop_retrav1}
There exists an instance with a single drone, all-zero payloads, and Euclidean metrics such that all the optimal solutions are arc-retraversing.
\end{prop}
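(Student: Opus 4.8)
The plan is to prove the statement by exhibiting a single explicit instance and certifying two facts about it: that it admits an arc-retraversing optimal solution, and that every non-arc-retraversing feasible solution is strictly suboptimal. First I would fix a small set of points in the plane, declare the truck and drone travel times to be (proportional) Euclidean distances with some speedup $\alpha$, set all payloads to zero, use a single drone ($m=1$), and choose the sortie-duration bound $L$; the concrete coordinates and parameters would be collected in Appendix~\ref{app_inst}. The geometry is to be chosen so that serving every customer efficiently forces the truck to \emph{launch} and later \emph{retrieve} the drone at two locations $i,j$ while also needing to pass directly from $i$ to $j$ on a separate occasion, compelling it to traverse the directed arc $\left( i,j\right)$ twice in the \emph{same} direction (as opposed to the cheaper back-and-forth traversal of the undirected edge, which is not arc-retraversing). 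The five-node, two-drone configuration of Figure~\ref{fig:twice_sol} is the guiding template: the single-drone instance must reproduce the same oscillation of the truck between a launch node and a retrieval node, now driven by a single long loop sortie rather than by two drones.

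Next I would display a concrete feasible solution whose truck route retraverses one arc, compute its completion time $z^*$, and establish that $z^*$ equals the optimal value of the \mbox{TSP-mD} on this instance. Since the problem couples a combinatorial routing/sortie-assignment decision with a continuous scheduling (timing) decision, I would certify optimality by solving the MILP formulation of Appendix~\ref{app_model} to global optimality: the finitely many binary decisions encode the truck closed walk together with the launch/retrieval structure of the sortie, while for each fixed combinatorial decision the optimal waiting schedule is the value of an embedded linear program. Solving this MILP to provable optimality yields $z^*$ together with a matching lower bound.

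To conclude that \emph{every} optimum is arc-retraversing, I would invoke the restriction \mbox{m-CIRCUIT}, in which the truck is forbidden to retraverse any arc; by definition its feasible region is exactly the set of non-arc-retraversing solutions, so by \eqref{relax} its optimal value $z^*_{\mathrm{C}}$ satisfies $z^* \le z^*_{\mathrm{C}}$. The crux is to upgrade this to a strict inequality $z^* < z^*_{\mathrm{C}}$, again by solving the corresponding MILP (with the arc-traversal counts capped at one) to optimality, or equivalently by enumerating the finitely many non-retraversing combinatorial structures and optimizing the schedule of each. A strict gap means that no non-arc-retraversing feasible solution attains the value $z^*$, i.e.\ all optimal solutions of the \mbox{TSP-mD} retraverse an arc, which is precisely the claim.

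The main obstacle is the rigorous certification of optimality for both the unrestricted and the arc-restricted problem, since a single nonoptimal bound would break the argument; in particular the strict gap $z^*_{\mathrm{C}} - z^*$ must be genuine and not an artifact of numerical tolerance. I would address this on two fronts. To make the gap robust, I would separate the ``retraversal-friendly'' cluster containing $i$ and $j$ from the remaining customers by a wide Euclidean distance, so that every circuit (non-retraversing) alternative is forced into a clearly quantifiable detour and $z^*_{\mathrm{C}} - z^*$ is bounded well away from zero. To keep the optimality proof tractable, I would use as few nodes as the phenomenon allows, so that the MILP instances in Appendix~\ref{app_model} can be solved to global optimality and the strict inequality read off directly.
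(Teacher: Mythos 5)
Your proposal is correct and follows essentially the same route as the paper: exhibit an explicit Euclidean single-drone instance, solve the arc-retraversal-forbidden restriction to provable optimality via the MILP of Appendix~\ref{app_model}, and exhibit a feasible arc-retraversing solution with strictly smaller completion time, which forces every optimum to be arc-retraversing. The only (harmless) difference is that you additionally certify optimality of the retraversing solution for the unrestricted problem, whereas the paper only needs its feasibility together with the strict gap below the restricted optimum.
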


\begin{proof}
The proof is complete if we can find an instance with Euclidean metrics, all-zero payloads and $m =1$, and two feasible solutions $\mathcal{S}_{1}$ and $\mathcal{S}_{2}$ such that $\mathcal{S}_{1}$ is optimal under the further restriction that no arc can be retraversed, and the respective objective function values $\mathrm{OBJ}_{1}$ and $\mathrm{OBJ}_{2}$ satisfy $\mathrm{OBJ}_{1}>\mathrm{OBJ}_{2}$.

Such an instance is described in Appendix~\ref{subapp_thrice_onlyonce_sol}, and shown in Figures~\ref{fig:thrice_onlyonce_sol} and \ref{fig:thrice_sol}; these figures represent solutions $\mathcal{S}_{1}$ and $\mathcal{S}_{2}$, and their objective function values $\mathrm{OBJ}_{1}$ and $\mathrm{OBJ}_{2}$ amount to $1084.09$ and $1050.36$, respectively. These solutions were obtained by solving the MILP model in Appendix~\ref{app_model} to optimality by a commercial solver.\end{proof}

\begin{figure}
\begin{minipage}{0.8\textwidth}
\begin{center}
\begin{tikzpicture}[scale=0.25,transform shape]
  \tikzset{VertexStyle/.append style={scale=1.67}}
  \Vertex[x=0.0,y=0.0,L={A}]{A}
  \Vertex[x=-1.0,y=3.0,L={B}]{B}
  \Vertex[x=1.0,y=3.0,L={C}]{C}
  \Vertex[x=-6.0,y=5.0,L={D}]{D}
  \Vertex[x=6.0,y=5.0,L={E}]{E}
  \Vertex[x=-18.0,y=10.0,L={F}]{F}
  \Vertex[x=-5.0,y=10.0,L={G}]{G}
  \Vertex[x=5.0,y=10.0,L={H}]{H}
  \Vertex[x=18.0,y=10.0,L={I}]{I}
  \Vertex[x=-3.0,y=14.0,L={J}]{J}
  \Vertex[x=3.0,y=14.0,L={K}]{K}
  \Vertex[x=-6.0,y=15.0,L={L}]{L}
  \Vertex[x=6.0,y=15.0,L={M}]{M}
  \Vertex[x=-1.0,y=17.0,L={N}]{N}
  \Vertex[x=1.0,y=17.0,L={O}]{O}
  
  \tikzstyle{LabelStyle}=[scale=1.67,fill=white,sloped]
  \tikzstyle{EdgeStyle}=[post, color=red]
  \Edge[label=$10.00$](H)(G)
  \Edge[label=$200.00$](I)(H)
  
  \tikzstyle{EdgeStyle}=[post, bend left=60, color=red]
  \Edge[label=$210.00$](H)(F)
  
  \tikzstyle{EdgeStyle}=[post, bend left=20, color=red]
  \Edge[label=$10.00$](G)(H)
  
  \tikzstyle{EdgeStyle}=[post, bend right=30, color=red]
  \Edge[label=$228.09$](A)(I)
  \tikzstyle{EdgeStyle}=[post, bend right=35, color=red]
  \Edge[label=$228.09$](F)(A)
  
  \tikzstyle{EdgeStyle}=[post, color=blue, dashed]
  \Edge[label=$43.04$](G)(B)
  \Edge(B)(C)
  \Edge(C)(H)
  
  \Edge(O)(N)
  
  \Edge[label=$33.90$](H)(K)
  \Edge(K)(M)
  \Edge(M)(H)
  
  \Edge[label=$33.90$](G)(J)
  \Edge(J)(L)
  \Edge(L)(G)
  
  \tikzstyle{EdgeStyle}=[post, bend left=35, color=blue, dashed]
  \Edge[label=$43.04$](H)(O)
  \Edge(N)(G)
  
  \tikzstyle{EdgeStyle}=[post, bend right=20, color=blue, dashed]
  \Edge[label=$32.02$](G)(D)
  \Edge(D)(G)
  
  \tikzstyle{EdgeStyle}=[post, bend left=20, color=blue, dashed]
  \Edge[label=$32.02$](H)(E)
  \Edge(E)(H)
\end{tikzpicture}
\end{center}
\end{minipage}
\hfill
\begin{minipage}{0.15\textwidth}
{\small
\begin{enumerate}
\item[\textcolor{red}{$\xregarrow$}\quad] truck
\item[\textcolor{blue}{$\xdasharrow$}\quad] drone
\end{enumerate}
}
\end{minipage}
\begin{center}
\begin{tikzpicture}[scale=0.15,transform shape]
\node[font=\fontsize{60}{0}\selectfont, anchor=west] at (-7,0) {truck};
\node[font=\fontsize{60}{0}\selectfont, anchor=west] at (-7,-5.0) {drone};

  \tikzset{VertexStyle/.append style={scale=1}}
  \Vertex[L=',x=0.0,y=0.0]{A1}
  \Vertex[L=',x=5.0,y=0.0]{B1}
  \Vertex[L=',x=10.0,y=0.0]{C1}
  \Vertex[L=',x=15.0,y=0.0]{D1}
  \Vertex[L=',x=20.0,y=0.0]{E1}
  \Vertex[L=',x=35.0,y=0.0]{H1}
  \Vertex[L=',x=40.0,y=0.0]{I1}
  \Vertex[L=',x=50.0,y=0.0]{K1}
  \Vertex[L=',x=55.0,y=0.0]{L1}
  \Vertex[L=',x=60.0,y=0.0]{M1}
  
  \Vertex[L=',x=10.0,y=-5.0]{C2}
  \Vertex[L=',x=15.0,y=-5.0]{D2}
  \Vertex[L=',x=20.0,y=-5.0]{E2}
  \Vertex[L=',x=25.0,y=-5.0]{F2}
  \Vertex[L=',x=30.0,y=-5.0]{G2}
  \Vertex[L=',x=35.0,y=-5.0]{H2}
  \Vertex[L=',x=40.0,y=-5.0]{I2}
  \Vertex[L=',x=45.0,y=-5.0]{J2}
  \Vertex[L=',x=50.0,y=-5.0]{K2}
  
  \tikzstyle{EdgeStyle}=[color=red, scale=3]
  \Edge[label=AI](A1)(B1)
  \Edge[label=IH](B1)(C1)
  \Edge[label=HG](D1)(E1)
  \Edge[label=GH](H1)(I1)
  \Edge[label=HF](K1)(L1)
  \Edge[label=FA](L1)(M1)
  
  \tikzstyle{EdgeStyle}=[color=blue, scale=3]
  \Edge[label=HKMH](C2)(D2)
  \Edge[label=HONG](D2)(F2)
  \Edge[label=GJLG](F2)(G2)
  \Edge[label=GDG](G2)(H2)
  \Edge[label=GBCH](H2)(J2)
  \Edge[label=HEH](J2)(K2)
  
  \draw[gray] (0.0,1.0) -- (0.0,-6.0);
  \node[scale=5] (A) at (0.0,-9.5) {\small $0.00$};
  \draw[gray] (5.0,1.0) -- (5.0,-6.0);
  \node[scale=5] (B) at (5.0,2.0) {\small $228.09$};
  \draw[gray] (10.0,1.0) -- (10.0,-6.0);
  \node[scale=5] (C) at (10.0,-9.5) {\small $428.09$};
  \draw[gray] (15.0,1.0) -- (15.0,-6.0);
  \node[scale=5] (D) at (15.0,2.0) {\small $461.99$};
  \draw[gray] (20.0,1.0) -- (20.0,-6.0);
  \node[scale=5] (E) at (20.0,-9.5) {\small $471.99$};
  \draw[gray] (25.0,1.0) -- (25.0,-6.0);
  \node[scale=5] (F) at (25.0,2.0) {\small $505.03$};
  \draw[gray] (30.0,1.0) -- (30.0,-6.0);
  \node[scale=5] (G) at (30.0,-9.5) {\small $538.93$};
  \draw[gray] (35.0,1.0) -- (35.0,-6.0);
  \node[scale=5] (H) at (35.0,2.0) {\small $570.95$};
  \draw[gray] (40.0,1.0) -- (40.0,-6.0);
  \node[scale=5] (I) at (40.0,-9.5) {\small $580.95$};
  \draw[gray] (45.0,1.0) -- (45.0,-6.0);
  \node[scale=5] (J) at (45.0,2.0) {\small $613.99$};
  \draw[gray] (50.0,1.0) -- (50.0,-6.0);
  \node[scale=5] (K) at (50.0,-9.5) {\small $646.01$};
  \draw[gray] (55.0,1.0) -- (55.0,-6.0);
  \node[scale=5] (L) at (55.0,2.0) {\small $856.01$};
  \draw[gray] (60.0,1.0) -- (60.0,-6.0);
  \node[scale=5] (M) at (60.0,-9.5) {\small $1084.09$};
\end{tikzpicture}
\captionof{figure}{An optimal solution of an instance with $15$ nodes, proportional metrics with $\alpha=1$, and $L=43.04$, when the truck is further constrained to traverse an arc at most once. The truck and the drone can visit the nodes in $N^{\mathit{tr}}=\left\lbrace A,F,G,H, I\right\rbrace$ and $N^{\mathit{dr}}=\left\lbrace B,C,D,E,J,K,L,M,N,O\right\rbrace$, respectively.}
\label{fig:thrice_onlyonce_sol}
\end{center}
\end{figure}
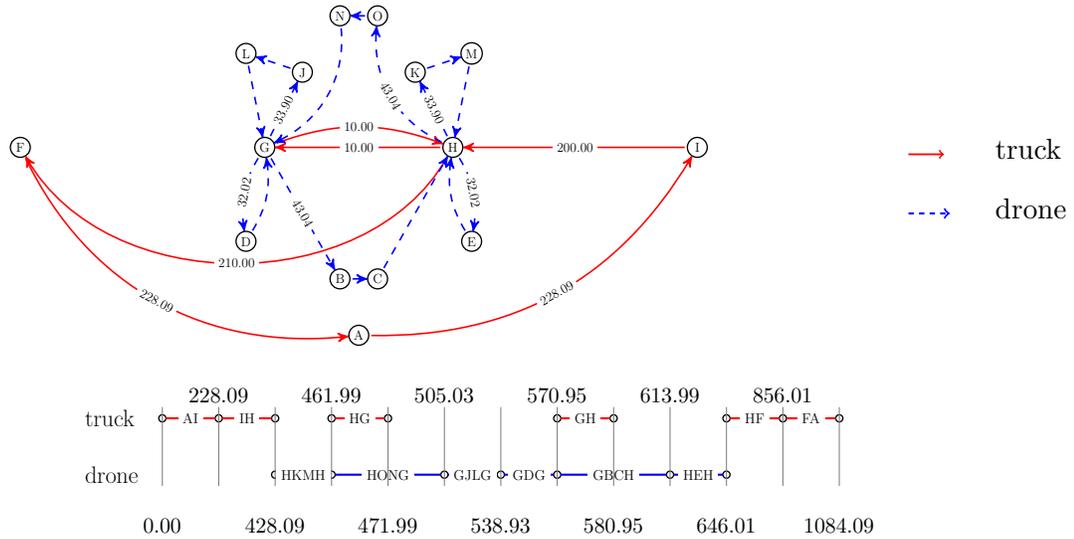

\begin{figure}
\begin{minipage}{0.8\textwidth}
\begin{center}
\begin{tikzpicture}[scale=0.25,transform shape]
  \tikzset{VertexStyle/.append style={scale=1.67}}
  \Vertex[x=0.0,y=0.0,L={A}]{A}
  \Vertex[x=-1.0,y=3.0,L={B}]{B}
  \Vertex[x=1.0,y=3.0,L={C}]{C}
  \Vertex[x=-6.0,y=5.0,L={D}]{D}
  \Vertex[x=6.0,y=5.0,L={E}]{E}
  \Vertex[x=-18.0,y=10.0,L={F}]{F}
  \Vertex[x=-5.0,y=10.0,L={G}]{G}
  \Vertex[x=5.0,y=10.0,L={H}]{H}
  \Vertex[x=18.0,y=10.0,L={I}]{I}
  \Vertex[x=-3.0,y=14.0,L={J}]{J}
  \Vertex[x=3.0,y=14.0,L={K}]{K}
  \Vertex[x=-6.0,y=15.0,L={L}]{L}
  \Vertex[x=6.0,y=15.0,L={M}]{M}
  \Vertex[x=-1.0,y=17.0,L={N}]{N}
  \Vertex[x=1.0,y=17.0,L={O}]{O}
  
  \tikzstyle{LabelStyle}=[scale=1.67,fill=white,sloped]
  \tikzstyle{EdgeStyle}=[post, color=red]
  \Edge[label=$200.00$](F)(G)
  \Edge[label=$10.00$](H)(G)
  \Edge[label=$200.00$](H)(I)
  
  \tikzstyle{EdgeStyle}=[post, bend left=20, color=red]
  \Edge[label=$10.00$](G)(H)
  
  \tikzstyle{EdgeStyle}=[post, bend right=20, color=red]
  \Edge[label=$10.00$](G)(H)
  
  \tikzstyle{EdgeStyle}=[post, bend right=60, color=red]
  \Edge[label=$10.00$](G)(H)
  
  \tikzstyle{EdgeStyle}=[post, bend left=40, color=red]
  \Edge[label=$10.00$](H)(G)
  
  \tikzstyle{EdgeStyle}=[post, bend left=30, color=red]
  \Edge[label=$228.09$](A)(F)
  \Edge[label=$228.09$](I)(A)
  
  \tikzstyle{EdgeStyle}=[post, color=blue, dashed]
  \Edge[label=$43.04$](G)(B)
  \Edge(B)(C)
  \Edge(C)(H)
  
  \Edge[label=$22.06$](H)(K)
  \Edge(K)(J)
  \Edge(J)(G)
  
  \Edge[label=$43.02$](G)(L)
  \Edge(M)(H)
  
  \Edge[label=$43.02$](H)(E)
  \Edge(D)(G)
  
  \Edge(N)(O)
  
  \tikzstyle{EdgeStyle}=[post, bend left=30, color=blue, dashed]
  \Edge[label=$43.04$](G)(N)
  \Edge(O)(H)
  
  \tikzstyle{EdgeStyle}=[post, bend left=50, color=blue, dashed]
  \Edge(L)(M)
  \Edge(E)(D)
\end{tikzpicture}
\end{center}
\end{minipage}
\hfill
\begin{minipage}{0.15\textwidth}
{\small
\begin{enumerate}
\item[\textcolor{red}{$\xregarrow$}\quad] truck
\item[\textcolor{blue}{$\xdasharrow$}\quad] drone
\end{enumerate}
}
\end{minipage}
\begin{center}
\begin{tikzpicture}[scale=0.15,transform shape]
\node[font=\fontsize{60}{0}\selectfont, anchor=west] at (-7,0) {truck};
\node[font=\fontsize{60}{0}\selectfont, anchor=west] at (-7,-5.0) {drone};

  \tikzset{VertexStyle/.append style={scale=1}}
  \Vertex[L=',x=0.0,y=0.0]{A1}
  \Vertex[L=',x=5.0,y=0.0]{B1}
  \Vertex[L=',x=10.0,y=0.0]{C1}
  \Vertex[L=',x=15.0,y=0.0]{D1}
  \Vertex[L=',x=20.0,y=0.0]{E1}
  \Vertex[L=',x=25.0,y=0.0]{F1}
  \Vertex[L=',x=30.0,y=0.0]{G1}
  \Vertex[L=',x=35.0,y=0.0]{H1}
  \Vertex[L=',x=40.0,y=0.0]{I1}
  \Vertex[L=',x=45.0,y=0.0]{J1}
  \Vertex[L=',x=50.0,y=0.0]{K1}
  \Vertex[L=',x=55.0,y=0.0]{L1}
  \Vertex[L=',x=60.0,y=0.0]{M1}
  \Vertex[L=',x=65.0,y=0.0]{N1}
  \Vertex[L=',x=70.0,y=0.0]{O1}
  
  \Vertex[L=',x=10.0,y=-5.0]{C2}
  \Vertex[L=',x=15.0,y=-5.0]{D2}
  \Vertex[L=',x=20.0,y=-5.0]{E2}
  \Vertex[L=',x=25.0,y=-5.0]{F2}
  \Vertex[L=',x=30.0,y=-5.0]{G2}
  \Vertex[L=',x=35.0,y=-5.0]{H2}
  \Vertex[L=',x=40.0,y=-5.0]{I2}
  \Vertex[L=',x=45.0,y=-5.0]{J2}
  \Vertex[L=',x=50.0,y=-5.0]{K2}
  \Vertex[L=',x=55.0,y=-5.0]{L2}
  \Vertex[L=',x=60.0,y=-5.0]{M2}
  
  \tikzstyle{EdgeStyle}=[color=red, scale=3]
  \Edge[label=AF](A1)(B1)
  \Edge[label=FG](B1)(C1)
  \Edge[label=GH](C1)(D1)
  \Edge[label=HG](E1)(F1)
  \Edge[label=GH](G1)(H1)
  \Edge[label=HG](I1)(J1)
  \Edge[label=GH](K1)(L1)
  \Edge[label=HI](M1)(N1)
  \Edge[label=IA](N1)(O1)
  
  \tikzstyle{EdgeStyle}=[color=blue, scale=3]
  \Edge[label=GBCH](C2)(E2)
  \Edge[label=HKJG](E2)(G2)
  \Edge[label=GLMH](G2)(I2)
  \Edge[label=HEDG](I2)(K2)
  \Edge[label=GNOH](K2)(M2)
  
  \draw[gray] (0.0,1.0) -- (0.0,-6.0);
  \node[scale=5] (A) at (0.0,-9.5) {\small $0.00$};
  \draw[gray] (5.0,1.0) -- (5.0,-6.0);
  \node[scale=5] (B) at (5.0,2.0) {\small $228.09$};
  \draw[gray] (10.0,1.0) -- (10.0,-6.0);
  \node[scale=5] (C) at (10.0,-9.5) {\small $428.09$};
  \draw[gray] (15.0,1.0) -- (15.0,-6.0);
  \node[scale=5] (D) at (15.0,2.0) {\small $438.09$};
  \draw[gray] (20.0,1.0) -- (20.0,-6.0);
  \node[scale=5] (E) at (20.0,-9.5) {\small $471.13$};
  \draw[gray] (25.0,1.0) -- (25.0,-6.0);
  \node[scale=5] (F) at (25.0,2.0) {\small $481.13$};
  \draw[gray] (30.0,1.0) -- (30.0,-6.0);
  \node[scale=5] (G) at (30.0,-9.5) {\small $493.19$};
  \draw[gray] (35.0,1.0) -- (35.0,-6.0);
  \node[scale=5] (H) at (35.0,2.0) {\small $503.19$};
  \draw[gray] (40.0,1.0) -- (40.0,-6.0);
  \node[scale=5] (I) at (40.0,-9.5) {\small $536.21$};
  \draw[gray] (45.0,1.0) -- (45.0,-6.0);
  \node[scale=5] (J) at (45.0,2.0) {\small $546.21$};
  \draw[gray] (50.0,1.0) -- (50.0,-6.0);
  \node[scale=5] (K) at (50.0,-9.5) {\small $579.23$};
  \draw[gray] (55.0,1.0) -- (55.0,-6.0);
  \node[scale=5] (L) at (55.0,2.0) {\small $589.23$};
  \draw[gray] (60.0,1.0) -- (60.0,-6.0);
  \node[scale=5] (M) at (60.0,-9.5) {\small $622.27$};
  \draw[gray] (65.0,1.0) -- (65.0,-6.0);
  \node[scale=5] (N) at (65.0,2.0) {\small $822.27$};
  \draw[gray] (70.0,1.0) -- (70.0,-6.0);
  \node[scale=5] (O) at (70.0,-9.5) {\small $1050.36$};
\end{tikzpicture}
\captionof{figure}{A feasible solution of the instance shown in Figure~\ref{fig:thrice_onlyonce_sol}, without any constraints on the number of traversals of any arc.}
\label{fig:thrice_sol}
\end{center}
\end{figure}
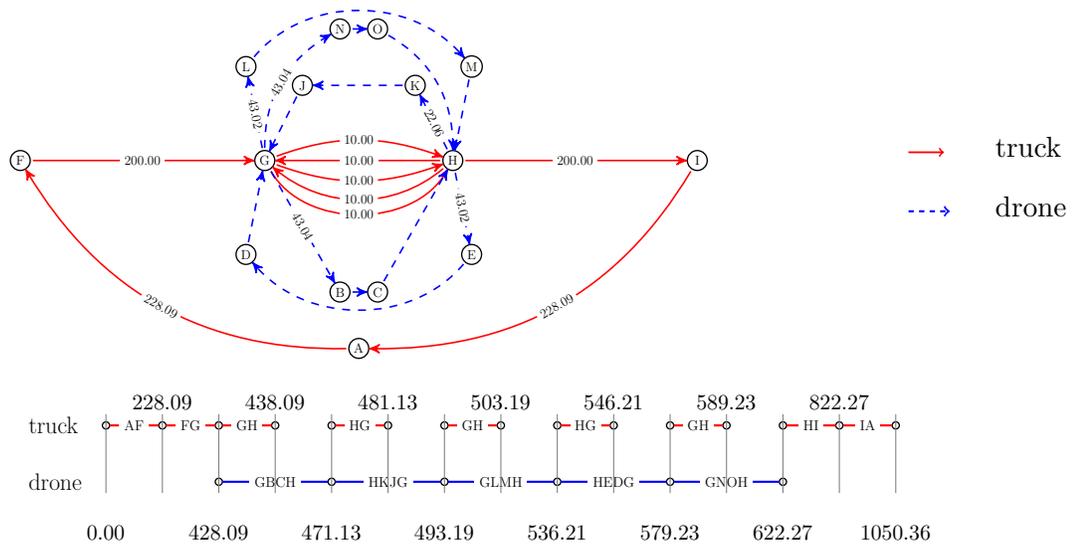

Notice that, in the instance shown in Figure~\ref{fig:thrice_onlyonce_sol}, the truck and the drones are not allowed to visit all the nodes, i.e., $N^{\mathit{tr}}\neq N$ and $N^{\mathit{dr}}\neq N$. An analogous result to Proposition~\ref{prop_retrav1} also holds when $N=N^{\mathit{dr}}=N^{\mathit{tr}}$, even when the feasible sorties serve only one customer.
\begin{prop}
\label{prop_retrav2}
There exists an instance with Euclidean metrics and all-zero payloads such that $N=N^{\mathit{dr}}=N^{\mathit{tr}}$, the feasible sorties only serve one customer, and all the optimal solutions are arc-retraversing.
\end{prop}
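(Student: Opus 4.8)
Following the same strategy as in the proof of Proposition~\ref{prop_retrav1}, it suffices to exhibit a single Euclidean instance satisfying $N=N^{\mathit{dr}}=N^{\mathit{tr}}$, with all-zero payloads and a sufficiently small sortie-duration bound $L$ so that every feasible sortie serves exactly one customer, together with two feasible solutions $\mathcal{S}_{1}$ and $\mathcal{S}_{2}$ such that $\mathcal{S}_{1}$ is optimal among the non-arc-retraversing solutions (that is, optimal for the m-CIRCUIT restriction) while $\mathcal{S}_{2}$ is arc-retraversing with $\mathrm{OBJ}_{2}<\mathrm{OBJ}_{1}$. Once such an instance is in hand, the argument is identical to the previous proof: the strict inequality certifies that no non-arc-retraversing solution can be optimal, so every optimal solution must retraverse some arc.

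The plan is to build the instance around two closely spaced \emph{hub} nodes, say $G$ and $H$, joined by a short edge, with the depot far away and several drone customers placed symmetrically on either side of the hubs, each reachable from exactly one hub by a round trip of length just under $L$. Because every sortie carries a single customer and returns to its launch hub, and because only $m$ drones are available, the truck is forced to shuttle repeatedly between $G$ and $H$ to launch and later retrieve the drones serving the two clusters; this oscillation is what drives the repeated use of the arc $(G,H)$ (or $(H,G)$). The key design tension, relative to Proposition~\ref{prop_retrav1}, is that here the truck is permitted to visit \emph{every} node directly. We therefore place the drone customers far enough from the truck's natural spine that driving to them is strictly slower than serving them by sortie, so that the shuttle pattern---rather than a direct truck detour---remains the unique cheap option, while keeping the configuration Euclidean.

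With the instance fixed, I would take $\mathcal{S}_{2}$ to be the natural arc-retraversing schedule in which the truck crosses the hub edge several times, launching and retrieving the single-customer sorties in an interleaved order that keeps both the truck and the drones almost continuously busy; its makespan $\mathrm{OBJ}_{2}$ can be read off directly from the resulting Gantt chart. To obtain $\mathcal{S}_{1}$ and its value $\mathrm{OBJ}_{1}$, I would solve the \mbox{TSP-mD} on this instance under the additional constraint that each arc is traversed at most once, using the MILP formulation of Appendix~\ref{app_model} together with that extra restriction, and solve it to optimality with a commercial solver. The proof is then complete upon verifying numerically that $\mathrm{OBJ}_{1}>\mathrm{OBJ}_{2}$.

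The main obstacle is the one that the full-access condition $N=N^{\mathit{tr}}=N^{\mathit{dr}}$ introduces: because the truck may reach any customer, there are many more candidate non-retraversing routes to rule out than in the setting of Proposition~\ref{prop_retrav1}, where the restriction $N^{\mathit{tr}}\neq N$ already excluded most truck paths. Establishing that $\mathcal{S}_{1}$ is genuinely the best non-arc-retraversing solution therefore cannot be done by inspection; it is precisely this exhaustive comparison that the exact MILP solve performs, and the delicate part is choosing the coordinates and the value of $L$ so that the optimal m-CIRCUIT value provably exceeds the arc-retraversing makespan $\mathrm{OBJ}_{2}$ by a strict margin.
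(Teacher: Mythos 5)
Your proposal follows essentially the same approach as the paper: the paper likewise exhibits a concrete instance (a five-node configuration with two hub customers $B,C$ near the depot, two remote drone-only targets $D,E$, $m=2$ drones, and $L$ chosen so that only single-customer loop sorties are feasible), solves the arc-retraversal-free restriction to optimality via the MILP of Appendix~\ref{app_model}, and compares the resulting value ($76.14$) against an explicit arc-retraversing solution ($62.42$). The only remaining work in your version is to pin down the actual coordinates and run the exact solve --- precisely the computational certificate the paper supplies in Appendix~\ref{subapp_twice_sol}.
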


\begin{proof}We follow an analogous argument to that of Proposition~\ref{prop_retrav1}. Consider the instance described in Appendix~\ref{subapp_twice_sol} and shown in Figures \ref{fig:twice_sol} and \ref{fig:once_sol}. On the one hand, the solution shown in Figure~\ref{fig:once_sol} is optimal under the condition that no arc-retraversing solutions are allowed, and leads to a completion time of $76.14$; on the other hand, that of Figure~\ref{fig:twice_sol} (which is optimal and retraverses one arc) leads to a strictly lower completion time, namely, $62.42$. \end{proof}

For the instance shown in Figures \ref{fig:twice_sol} and \ref{fig:once_sol}, not allowing arc-retraversing solutions leads to an increase of $18\%$ of the optimal completion time.

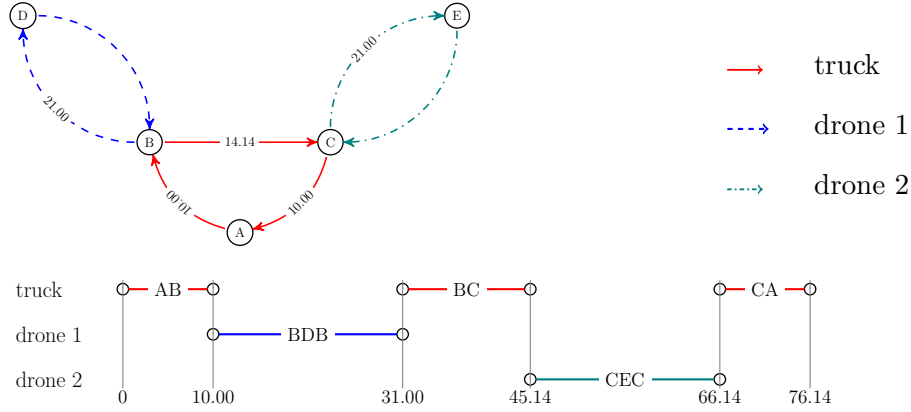
\begin{figure}
\begin{minipage}{0.7\textwidth}
\begin{center}
\begin{tikzpicture}[scale=0.85,transform shape,rotate=45]
  \tikzset{VertexStyle/.append style={scale=0.5}}
  \Vertex[x=0.0,y=0.0,L=\rotatebox{-45}{A}]{A}
  \Vertex[x=0.0,y=2.0,L=\rotatebox{-45}{B}]{B}
  \Vertex[x=2.0,y=0.0,L=\rotatebox{-45}{C}]{C}
  \Vertex[x=0.0,y=4.8,L=\rotatebox{-45}{D}]{D}
  \Vertex[x=4.8,y=0.0,L=\rotatebox{-45}{E}]{E}
  
  \tikzstyle{LabelStyle}=[scale=0.5,fill=white,sloped]
  \tikzstyle{EdgeStyle}=[post, color=red]
  \Edge[label=$14.14$](B)(C)
  
  \tikzstyle{EdgeStyle}=[post, bend left=30, color=red]
  \Edge[label=$10.00$](A)(B)
  \Edge[label=$10.00$](C)(A)
  
  \tikzstyle{EdgeStyle}=[post, bend left=45, color=red]
  
  \tikzstyle{EdgeStyle}=[post, bend left= 45, color=blue, dashed]
  \Edge[label=$21.00$](B)(D)
  \Edge(D)(B)
  
  \tikzstyle{EdgeStyle}=[post, bend left= 45, color=teal, dash dot]
  \Edge[label=$21.00$](C)(E)
  \Edge(E)(C)
\end{tikzpicture}
\end{center}
\end{minipage}
\hfill
\begin{minipage}{0.25\textwidth}
{\small
\begin{enumerate}
\item[\textcolor{red}{$\xregarrow$}\quad] truck
\item[\textcolor{blue}{$\xdasharrow$}\quad] drone 1
\item[\textcolor{teal}{$\xdotdasharrow$}\quad] drone 2
\end{enumerate}
}
\end{minipage}
\begin{center}
\begin{tikzpicture}[scale=0.12,transform shape]
\node[font=\fontsize{70}{0}\selectfont, anchor=west] at (-12,0) {truck};
\node[font=\fontsize{70}{0}\selectfont, anchor=west] at (-12,-5) {drone 1};
\node[font=\fontsize{70}{0}\selectfont, anchor=west] at (-12,-10) {drone 2};

  \tikzset{VertexStyle/.append style={scale=2}}
  \Vertex[L=',x=0.0,y=0.0]{A1}
  \Vertex[L=',x=10.0,y=0.0]{B1}
  \Vertex[L=',x=31.0,y=0.0]{G1}
  \Vertex[L=',x=45.14,y=0.0]{H1}
  \Vertex[L=',x=66.14,y=0.0]{I1}
  \Vertex[L=',x=76.14,y=0.0]{J1}
  
  \Vertex[L=',x=10.0,y=-5.0]{B2}
  \Vertex[L=',x=31.0,y=-5.0]{G2}
  
  \Vertex[L=',x=45.14,y=-10.0]{H3}
  \Vertex[L=',x=66.14,y=-10.0]{I3}
  
  \tikzstyle{EdgeStyle}=[color=red, scale=5]
  \Edge[label=AB](A1)(B1)
  \Edge[label=BC](G1)(H1)
  \Edge[label=CA](I1)(J1)
  
  \tikzstyle{EdgeStyle}=[color=blue, scale=5]
  \Edge[label=BDB](B2)(G2)
  
  \tikzstyle{EdgeStyle}=[color=teal, scale=5]
  \Edge[label=CEC](H3)(I3)
  
  \draw[gray] (0.0,1.0) -- (0.0,-11.0);
  \node[scale=5] (A) at (0.0,-12.0) {$0$};
  \draw[gray] (10.0,1.0) -- (10.0,-11.0);
  \node[scale=5] (B) at (10.0,-12.0) {$10.00$};
  \draw[gray] (31.0,1.0) -- (31.0,-11.0);
  \node[scale=5] (D) at (31.0,-12.0) {$31.00$};
  \draw[gray] (45.14,1.0) -- (45.14,-11.0);
  \node[scale=5] (F) at (45.14,-12.0) {$45.14$};
  \draw[gray] (66.14,1.0) -- (66.14,-11.0);
  \node[scale=5] (I) at (66.14,-12.0) {$66.14$};
  \draw[gray] (76.14,1.0) -- (76.14,-11.0);
  \node[scale=5] (J) at (76.14,-12.0) {$76.14$};
\end{tikzpicture}
\captionof{figure}{An optimal solution of the instance shown in Figure~\ref{fig:twice_sol}, when the truck is constrained not to traverse any arc more than once.}
\label{fig:once_sol}
\end{center}
\end{figure}

\subsection{Sufficiency of non-arc-retraversing solutions}
\label{results_subsec}
In this section, we provide conditions under which there exists an optimal solution that is not arc-retraversing.

\begin{lem}
\label{lemma1}
There exists an optimal solution such that if a node $i\in N\smallsetminus\left\lbrace 0\right\rbrace$ is visited multiple times, then a drone is launched or retrieved every time the truck visits $i$.
\end{lem}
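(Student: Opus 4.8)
The plan is to prove this by an extremal (minimality) argument on the truck route. Among all optimal solutions I would fix one, call it $\mathcal{S}^{\ast}$, whose closed truck walk uses the fewest node visits (equivalently, the fewest arc traversals), breaking ties arbitrarily; such a minimizer exists since the objective value is bounded below by the optimum and the walk length is a nonnegative integer. I claim $\mathcal{S}^{\ast}$ has the asserted property. Suppose not: then some node $i\in N\smallsetminus\left\lbrace 0\right\rbrace$ is visited more than once by the truck, and at (at least) one of these visits, say visit $v$, neither a launch nor a retrieval of any drone occurs. I will derive a contradiction by exhibiting an optimal solution with strictly fewer truck visits.

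At visit $v$ the truck enters $i$ from some node $a$ and departs towards some node $b$ along its closed walk (with $a=b$ if $v$ is a dead-end detour to $i$ and back). The key move is to \emph{shortcut} the walk at $v$: replace the subwalk $a\to i\to b$ by the single arc $(a,b)$, or simply delete the detour if $a=b$. Since $a,b\in N^{\mathit{tr}}$ and $G$ is complete the arc $(a,b)$ exists, and since $\ell$ is a metric we have $\ell_{ab}\le\ell_{ai}+\ell_{ib}$, so the total truck travel does not increase while the number of node visits strictly decreases. Because $i\neq 0$ and $i$ is visited at least twice, deleting visit $v$ still leaves a truck visit to $i$; as a node is served merely by being visited, node $i$ remains served, and every other node keeps all of its visits.

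The hard part will be verifying that this modification stays feasible and does not increase the completion time, because of the synchronization between the truck and the airborne drones. The idea I would use is to keep every sortie, together with its launch and retrieval nodes and their absolute times, \emph{exactly} as in $\mathcal{S}^{\ast}$, and only reschedule the truck. Precisely because no drone is launched or retrieved at $i$ during visit $v$, no synchronization event is tied to the deleted visit, so every launch and retrieval event survives unchanged at another node. After the shortcut the truck reaches each subsequent node no later than before, and since the truck is allowed to wait at nodes, it can wait so as to meet each launch and each retrieval at precisely its original time; the nonnegativity of $\ell$ guarantees the truck never needs to arrive earlier than physically possible. Consequently every drone returns to the depot at the same instant as before, while the truck returns no later, so the completion time does not increase. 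The resulting solution is therefore still optimal yet uses strictly fewer truck visits, contradicting the minimality of $\mathcal{S}^{\ast}$. This contradiction shows that in $\mathcal{S}^{\ast}$ every repeated visit to a node $i\in N\smallsetminus\left\lbrace 0\right\rbrace$ coincides with a launch or a retrieval, which is exactly the claim.
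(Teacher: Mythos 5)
Your proof is correct and uses the same core idea as the paper's: shortcutting the truck route at the redundant visit, with the triangle inequality guaranteeing no increase in travel time and the absence of a launch/retrieval at that visit guaranteeing that all drone operations survive. The paper states this in two sentences; your extremal (fewest-visits) framing and the explicit synchronization argument merely make the same reasoning fully rigorous.
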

\begin{proof}Suppose by contradiction that a node $i\in N\smallsetminus\left\lbrace 0\right\rbrace$ is visited multiple times, but during one of these times no drone is launched nor retrieved. Then, we can shortcut the truck route at $i$, without missing any drone operations, and 
the resulting route will not be strictly longer than the original one.\end{proof}

An analogous result to Lemma~\ref{lemma1} applies to the depot as well, starting from its third visit on (the first two visits being the start and the end of the whole operation). Given a feasible solution represented by a truck route $\pi^{\mathit{tr}}$ and drone sorties, we define the \emph{support} of a sortie $\pi^{\mathit{dr}}$ as the sub-route $\pi'\subseteq \pi^{\mathit{tr}}$ traversed by the truck when a drone flies along $\pi^{\mathit{dr}}$. For example, in Figure~\ref{fig:pinned}, the support of the sorties $\left( B,G,D\right)$, $\left( D,H,D\right)$, $\left( D,I,F\right)$, $\left( F,J,B\right)$ are $\left( B,C,D\right)$, $D$, $\left( D,E,F\right)$, and the arc $\left( F,B\right)$, respectively.

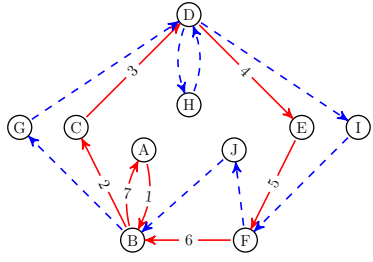
\begin{figure}
\begin{center}
\begin{tikzpicture}[scale=0.3,transform shape]
  \tikzset{VertexStyle/.append style={scale=1.67}}
  \Vertex[x=-2.0,y=9.0,L={A}]{A}
  \Vertex[x=-2.5,y=5.0,L={B}]{B}
  \Vertex[x=-5.0,y=10.0,L={C}]{C}
  \Vertex[x=0.0,y=15.0,L={D}]{D}
  \Vertex[x=5.0,y=10.0,L={E}]{E}
  \Vertex[x=2.5,y=5.0,L={F}]{F}
  
  \Vertex[x=-7.5,y=10.0,L={G}]{G}
  \Vertex[x=0.0,y=11.0,L={H}]{H}
  \Vertex[x=7.5,y=10.0,L={I}]{I}
  \Vertex[x=2.0,y=9.0,L={J}]{J}
  
  \tikzstyle{LabelStyle}=[scale=1.67,fill=white,sloped]
  \tikzstyle{EdgeStyle}=[post, color=red]
  \Edge[label=$2$](B)(C)
  \Edge[label=$3$](C)(D)
  \Edge[label=$4$](D)(E)
  \Edge[label=$5$](E)(F)
  \Edge[label=$6$](F)(B)
  
  \tikzstyle{EdgeStyle}=[post, color=red, bend left=20]
  \Edge[label=\rotatebox{-90}{$1$}](A)(B)
  \Edge[label=\rotatebox{-90}{$7$}](B)(A)
  
  \tikzstyle{EdgeStyle}=[post, color=blue, dashed]
  \Edge(B)(G)
  \Edge(G)(D)
  \Edge(D)(I)
  \Edge(I)(F)
  \Edge(F)(J)
  \Edge(J)(B)
  
  \tikzstyle{EdgeStyle}=[post, color=blue, dashed, bend right=20]
  \Edge(D)(H)
  \Edge(H)(D)
\end{tikzpicture}
\captionof{figure}{Illustration of the definition of support of a sortie. The numbers indicate the positions of the corresponding arcs in the truck route.}
\label{fig:pinned}
\end{center}
\end{figure}

\begin{lem}
\label{lemma2}
Let $m=1$. There exists an optimal solution such that if an arc $\left( i,j\right)\in A$ is traversed multiple times, then the following conditions hold:
\begin{itemize}
    \item[\emph{i.}] if the support of a sortie $\pi'$ contains $\left( i,j\right)$, then the support of $\pi'$ is equal to arc $\left( i,j\right)$;
    \item[\emph{ii.}] if the support of a sortie $\pi'$ shares at least one arc with a truck sub-route $\pi_{ji}$ from $j$ to $i$ between two consecutive traversals of $\left( i,j\right)$, then the support of $\pi'$ is contained in $\pi_{ji}$.
\end{itemize}
\end{lem}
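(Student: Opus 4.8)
The plan is to take the optimal solution guaranteed by Lemma~\ref{lemma1} (together with its depot analogue) and to show that it already satisfies both conditions, so that no further surgery is required. The single engine driving both parts is the following observation, which is exactly where the hypothesis $m=1$ is used: since there is only one drone, whenever it is airborne along a sortie the truck can neither launch nor retrieve any drone. Consequently, at every node that the truck visits \emph{strictly inside} the support of a sortie, that is, at a node that is neither the launch nor the retrieval node of the sortie, no drone operation takes place. I would combine this with Lemma~\ref{lemma1}: if $\left( i,j\right)$ is traversed multiple times, then both $i$ and $j$ are visited multiple times, so a drone is launched or retrieved at \emph{every} visit to $i$ and to $j$. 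Comparing the two facts, neither $i$ nor $j$ can ever occur as an interior node of any support, since such an occurrence would be a visit carrying no drone operation. This key fact, namely that the endpoints of a multiply-traversed arc are never interior to a support, is what I would establish first; everything else is a short deduction.

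Condition~i then follows directly. Suppose the support of a sortie $\pi'$ contains the arc $\left( i,j\right)$. Within this support the truck passes through $i$ and then through $j$; since neither may be interior, $i$ must be the launch node and $j$ the retrieval node of $\pi'$. As the support is a contiguous truck sub-route from $i$ to $j$ that contains $\left( i,j\right)$, any additional arc before $i$ or after $j$ would make $i$ or $j$ interior, which is impossible; hence the support equals the single arc $\left( i,j\right)$.

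Condition~ii is entirely analogous. The sub-route $\pi_{ji}$ begins at $j$, immediately after one traversal of $\left( i,j\right)$, and ends at $i$, immediately before the next. If the support of a sortie $\pi'$ shares an arc with $\pi_{ji}$ yet extends beyond the left endpoint $j$, then $j$ is interior to that support; if it extends beyond the right endpoint $i$, then $i$ is interior. Both contradict the key fact, so the support is contained in $\pi_{ji}$.

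The routine part is the bookkeeping of what ``interior'', ``contains an arc'', and ``support'' mean along a closed walk that may repeat nodes. The main obstacle I anticipate is the depot: Lemma~\ref{lemma1} excludes node $0$, and its analogue only guarantees drone operations from the third visit onward. I would dispose of this by observing that at the very first visit to $0$ the drone has not yet been launched and at the final visit it has already been retrieved, so $0$ cannot be an airborne-interior node of any support at its first two visits either. Hence the key fact holds verbatim when $i$ or $j$ equals the depot, and both conditions follow without modification.
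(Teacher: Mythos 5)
Your proof is correct and follows essentially the same route as the paper's: both hinge on the observation that with $m=1$ no launch or retrieval can occur at a node strictly interior to a support, which together with Lemma~\ref{lemma1} forces the endpoints of a multiply-traversed arc never to be interior to any support, and conditions~(i) and~(ii) then follow immediately. The only cosmetic difference is that you verify that the Lemma~\ref{lemma1} solution already satisfies both conditions directly, whereas the paper phrases the same fact as a further shortcutting step at the redundant visit of $i$ or $j$; your explicit treatment of the depot is a welcome detail the paper leaves implicit.
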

\begin{proof} If the support of a sortie $\pi'$ contains $\left( i,j\right)$ and at least one more arc, then the truck visits either $i$ or $j$ without launching or retrieving the drone. Then, by Lemma~\ref{lemma1}, we can shortcut its route at the redundant node without losing optimality.

Suppose that a sortie $\pi'$ shares at least one arc with a truck sub-route $\pi_{ji}$ like in the hypothesis. Without loss of generality, we can assume that the arcs shared by both $\pi'$ and $\pi_{ji}$ are only traversed once. Indeed, if not, then the thesis holds by step~(i.). If the support of $\pi'$ contains an arc that does not belong to $\pi_{ji}$, then the truck must visit either $i$ or $j$ while the drone is airborne. Again, by Lemma~\ref{lemma1}, we can shortcut the truck route at the redundant node without losing optimality.
\end{proof}

Figure~\ref{fig:arc_pinned} illustrates the thesis of Lemma~\ref{lemma2}. In this example, the support of sorties $\left( B,G,C\right)$ and $\left( B,F,C\right)$ is the arc $\left( B,C\right)$, and the supports of sorties $\left( C,H,D\right)$ and $\left( D,I,B\right)$ are the truck sub-routes $\left( C,D\right)$ and $\left( D,E,B\right)$, respectively.

\begin{figure}
\centering
\begin{tikzpicture}[scale=0.3,transform shape]
  \tikzset{VertexStyle/.append style={scale=1.67}}
  \Vertex[x=-5.0,y=5.0,L={A}]{A}
  \Vertex[x=0.0,y=0.0,L={B}]{B}
  \Vertex[x=0.0,y=10.0,L={C}]{C}
  \Vertex[x=6.5,y=7.5,L={D}]{D}
  \Vertex[x=6.5,y=2.5,L={E}]{E}
  
  \Vertex[x=2.5,y=5.0,L={F}]{F}
  \Vertex[x=-2.5,y=5.0,L={G}]{G}
  \Vertex[x=4.0,y=7.0,L={H}]{H}
  \Vertex[x=4.0,y=3.0,L={I}]{I}
  
  \tikzstyle{LabelStyle}=[scale=1.67,fill=white,sloped]
  \tikzstyle{EdgeStyle}=[post, color=red]
  
  \Edge[label=$3$](C)(D)
  \Edge[label=$4$](D)(E)
  \Edge[label=$5$](E)(B)

  \tikzstyle{EdgeStyle}=[post, color=red, bend left=20]
  \Edge[label=\rotatebox{90}{$2$}](B)(C)
  \tikzstyle{EdgeStyle}=[post, color=red, bend right=20]
  \Edge[label=\rotatebox{-90}{$6$}](B)(C)
  \Edge[label=$1$](A)(B)
  \Edge[label=$7$](C)(A)
  
  \tikzstyle{EdgeStyle}=[post, color=blue, dashed]
  \Edge(B)(G)
  \Edge(G)(C)
  \Edge(C)(H)
  \Edge(H)(D)
  \Edge(D)(I)
  \Edge(I)(B)
  \Edge(B)(F)
  \Edge(F)(C)
\end{tikzpicture}
\caption{Illustration of the thesis of Lemma~\ref{lemma2}.}
\label{fig:arc_pinned}
\end{figure}
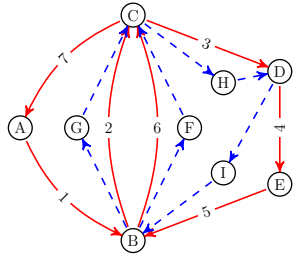

\begin{prop}
\label{saving_prop1}
At least one optimal solution is not arc-retraversing if only a single drone is available, all sorties are invertible and serve at most one customer.
\end{prop}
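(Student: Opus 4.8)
The plan is to start from an optimal solution that already satisfies Lemmas~\ref{lemma1} and \ref{lemma2} (legitimate, since $m=1$), and to repeatedly remove same-direction double traversals of arcs by local surgery until none remain. Suppose such a solution retraverses an arc $(i,j)$, and fix two \emph{consecutive} traversals of $(i,j)$ together with the truck sub-route $\pi_{ji}$ from $j$ to $i$ between them; thus the truck traverses $(i,j)$, then follows $\pi_{ji}$ back from $j$ to $i$, and then traverses $(i,j)$ a second time. Call this sub-walk $W$, which runs from $i$ to $j$. By Lemma~\ref{lemma2}(i), during each of the two traversals of $(i,j)$ the drone either rides the truck or flies a sortie with support exactly $(i,j)$; since each sortie serves a single customer, such a sortie has the form $(i,c,j)$, launched at $i$ and retrieved at $j$. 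By Lemma~\ref{lemma2}(ii), every sortie overlapping $\pi_{ji}$ has its support contained in $\pi_{ji}$, so no sortie straddles the boundary between the traversals and $\pi_{ji}$.

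The central step is to replace $W$ by the sub-walk $W'$ that runs $i \to j$ along $(i,j)$, then $j \to i$ along $(j,i)$, and finally from $i$ back to $j$ along $\mathrm{rev}(\pi_{ji})$, i.e.\ $\pi_{ji}$ traversed in reverse. I would keep the first sortie on the arc $(i,j)$; serve the customer $c'$ of the second traversal (if any) by the inverted sortie $(j,c',i)$ on the arc $(j,i)$; and serve each sortie of $\pi_{ji}$ by its inverse along $\mathrm{rev}(\pi_{ji})$. All three hypotheses enter here: \emph{invertibility} makes every inverted sortie feasible, the \emph{single-customer} restriction lets me invert the two traversal-sorties independently, and the \emph{single drone} forces the customers to be served sequentially, so that the two $i$--$j$ displacements needed in $W$ can be realized as one $i \to j$ and one $j \to i$ displacement rather than two copies of $(i,j)$. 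Because $\ell$ is symmetric the truck travel time of $W'$ equals that of $W$, and because an inverted single-customer sortie $(b,c,a)$ has the same duration as $(a,c,b)$, all synchronization waits are mirrored; hence the makespan is unchanged and the drone is on the truck at both endpoints of $W'$ exactly as in $W$, so $W'$ splices cleanly into the schedule. After the surgery the edge $\lbrace i,j\rbrace$ is traversed once in each direction instead of twice in the same direction, so this occurrence of the arc $(i,j)$ is no longer retraversed. (A useful preliminary observation: if neither traversal carries a sortie, then routing the truck directly along $\mathrm{rev}(\pi_{ji})$ deletes \emph{both} traversals and strictly shortens the tour while preserving all drone operations; by optimality this configuration cannot occur, so around every retraversal at least one traversal carries a support-$(i,j)$ sortie, and the makespan-preserving surgery above is the relevant one. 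This contrast is exactly why two drones, which can be served during a \emph{single} displacement only at the cost of simultaneous rendezvous constraints, do require retraversals in Proposition~\ref{prop_retrav2}.)

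The hard part will be \textbf{termination}: the surgery also reverses the arcs of $\pi_{ji}$ and introduces a traversal of $(j,i)$, so it could in principle create retraversals elsewhere. I would control this with the potential $\Phi = \sum_{a\in A}\max\lbrace n_a-1,\,0\rbrace$, where $n_a$ is the number of truck traversals of arc $a$, and always act on an \emph{innermost} consecutive pair—one whose intermediate sub-route $\pi_{ji}$ contains no further traversal of $(i,j)$. One then checks that when $(j,i)$ is not already used and $\mathrm{rev}(\pi_{ji})$ shares no arc with the rest of the route, the surgery lowers $\Phi$ by exactly one, while the degenerate cases (where reversing $\pi_{ji}$ re-creates $(i,j)$ or collides with traversals outside $W$) must be excluded or rerouted separately. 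Establishing that some such potential is genuinely monotone under the surgery is the delicate core of the argument; once that is in place, the process halts, and it halts at an optimal solution that is not arc-retraversing, which proves the proposition.
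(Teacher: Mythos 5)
There is a genuine gap, and it sits exactly where you flagged it: termination. The paper's surgery is different in a way that makes termination immediate. Instead of rerouting the truck through $(i,j)\circ(j,i)\circ\mathrm{rev}(\pi_{ji})$, the paper \emph{deletes both} copies of $(i,j)$, routes the truck as $\pi_{0i}\circ\pi_{ji}^{-1}\circ\pi_{j0}$, and rescues each sortie $(i,k,j)$ that was supported on $(i,j)$ by turning it into a loop $(i,k,i)$ or $(j,k,j)$ (whichever side is shorter) with the truck waiting; this costs at most $2\min\{\ell'_{ik},\ell'_{kj}\}\leq\ell'_{ik}+\ell'_{kj}$ of waiting, so the makespan does not increase, and the total number of truck arc traversals (with multiplicity) drops by exactly two. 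That count is a monotone potential, so the process halts no matter which new same-direction coincidences the reversal of $\pi_{ji}$ happens to create. Your surgery preserves the truck's total travel time but not the traversal count, and your potential $\Phi=\sum_a(n_a-1)^+$ is not monotone under it: the inserted $(j,i)$ may collide with an existing traversal of $(j,i)$, and reversing $\pi_{ji}$ may collide with arcs of $\pi_{0i}$ or $\pi_{j0}$. Worse, the surgery fails outright in the common case $\pi_{ji}=(j,i)$ (truck pattern $i\to j\to i\to j$): then $\mathrm{rev}(\pi_{ji})=(i,j)$ and $W'=(i,j)\circ(j,i)\circ(i,j)$ retraverses $(i,j)$ again, so no progress is made even on an innermost pair. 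You acknowledge these cases must be ``excluded or rerouted separately,'' but that exclusion is the whole proof; as written the argument does not terminate.

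A second warning sign: your surgery never really uses the single-customer hypothesis, since an invertible multi-customer sortie $(i,c_1,\dots,c_k,j)$ could equally be flipped to $(j,c_k,\dots,c_1,i)$ and placed on the $(j,i)$ leg. Yet the proposition is false for two-customer sorties (the instance of Figures~\ref{fig:thrice_onlyonce_sol} and~\ref{fig:thrice_sol}), so any correct proof must use that hypothesis somewhere; in the paper it is used precisely to convert $(i,k,j)$ into a short loop, which is what permits deleting the arc rather than rerouting it. Finally, your side remark that a retraversal with no sortie on either traversal ``cannot occur by optimality'' is not sound: shortening the truck's walk need not strictly reduce the completion time (the makespan may be pinned by drone waiting elsewhere), so such configurations can occur in optimal solutions — they just can be removed without loss, which is all that is needed.
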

\begin{proof} 

Consider an optimal solution with the properties described in Lemma~\ref{lemma2}, and denote its truck route by $\pi$. Without loss of generality, suppose that at least one arc in $\pi$ is traversed multiple times (if not, there is nothing to prove): we choose one and denote it by $\left( i,j\right)$. The proof is complete if we show that there always exists another truck route $\pi'$ that does not lead to a (strictly) longer completion time, and that traverses (with multiplicity) two arcs less than $\pi$.

Because the truck travels from $i$ to $j$ at least twice, its route must contain a path from $j$ to $i$, between two consecutive traversals of $\left( i,j\right)$, that we denote by $\pi_{ji}$. Then, we call $\pi_{0i}$ and $\pi_{j0}$ the paths from the depot to $i$ and from $j$ to the depot (respectively) such that $\pi = \pi_{0i}\circ \left( i,j\right) \circ \pi_{ji} \circ \left( i,j\right) \circ \pi_{j0}$. Since we only have one drone, Lemma~\ref{lemma2} implies that the support of all the sorties performed while the truck travels along $\left( i,j\right)$ or $\pi_{ji}$ are contained in $\left( i,j\right)$ and $\pi_{ji}$, respectively. Because sorties are invertible by hypothesis, we can invert both the truck route $\pi_{ji}$ and the sorties whose support is contained in it; we call the resulting inverted route $\pi_{ji}^{-1}$. The new truck route defined by $\pi_{0i}\circ \pi_{ji}^{-1} \circ \pi_{j0}$ does not miss any drone operations whose supports were originally contained in $\pi_{0i}$, $\pi_{ji}$ or $\pi_{j0}$; however, by defining the arc $\left( i,j\right)$ out of the new route twice, we might miss all those sorties whose supports were originally the arc $\left( i,j\right)$ (at most two of them, since we removed $\left( i,j\right)$ twice, and only a single drone is available).

Because we are only allowed to visit one customer per sortie, such sorties must be of the form $\left( i,k,j\right)$, for some intermediate customer $k\in N^{\mathit{dr}}\smallsetminus\left\lbrace i,j\right\rbrace$. If $\ell'_{ik}\leq\ell'_{kj}$, then at node $i$, as soon as the truck has traveled along $\pi_{0i}$ and would otherwise be ready to leave $i$, we can instead launch the drone back and forth from $i$ to $k$, and let the truck wait for it at $i$. Otherwise, we can launch the drone back and forth from $j$ to $k$, immediately after the truck has traveled along $\pi_{ji}^{-1}$. These (possibly two) new sorties are by construction not longer than the original sorties of the form $\left( i,k,j\right)$. Hence, they are feasible and, when incorporated into the new truck route $\pi' = \pi_{0i}\circ\pi_{ji}^{-1}\circ\pi_{j0}$, they lead to a completion time that is not greater than $\pi$'s original one. In this solution, the truck traverses (with multiplicity) two arcs less than $\pi$.\end{proof}

\begin{figure}
\begin{center}
\begin{minipage}{0.45\textwidth}
\centering
\begin{tikzpicture}[scale=0.3,transform shape]
  \tikzset{VertexStyle/.append style={scale=1.67}}
  \Vertex[x=0.0,y=0.0,L={A}]{A}
  \Vertex[x=-5.0,y=6.0,L={B}]{B}
  \Vertex[x=5.0,y=6.0,L={C}]{C}
  \Vertex[x=0.0,y=3.0,L={D}]{D}
  \Vertex[x=0.0,y=9.0,L={E}]{E}
  \Vertex[x=0.0,y=12.0,L={F}]{F}
  
  \tikzstyle{LabelStyle}=[scale=1.67,fill=white,sloped]
  \tikzstyle{EdgeStyle}=[post, color=red]
  \Edge[label=$1$](A)(B)
  \Edge[label=$3$](C)(F)
  \Edge[label=$4$](F)(B)
  \Edge[label=$6$](C)(A)
  
  \tikzstyle{EdgeStyle}=[post, color=red, bend left=20]
  \Edge[label=$2$](B)(C)
  \tikzstyle{EdgeStyle}=[post, color=red, bend right=20]
  \Edge[label=$5$](B)(C)
  
  \tikzstyle{EdgeStyle}=[post, color=blue, dashed]
  \Edge(B)(D)
  \Edge(D)(C)
  \Edge(B)(E)
  \Edge(E)(C)
\end{tikzpicture}
\end{minipage}
\begin{minipage}{0.45\textwidth}
\centering
\begin{tikzpicture}[scale=0.3,transform shape]
  \tikzset{VertexStyle/.append style={scale=1.67}}
  \Vertex[x=0.0,y=0.0,L={A}]{A}
  \Vertex[x=-5.0,y=6.0,L={B}]{B}
  \Vertex[x=5.0,y=6.0,L={C}]{C}
  \Vertex[x=0.0,y=3.0,L={D}]{D}
  \Vertex[x=0.0,y=9.0,L={E}]{E}
  \Vertex[x=0.0,y=12.0,L={F}]{F}
  
  \tikzstyle{LabelStyle}=[scale=1.67,fill=white,sloped]
  \tikzstyle{EdgeStyle}=[post, color=red]
  \Edge[label=$1$](A)(B)
  \Edge[label=$2$](B)(F)
  \Edge[label=$3$](F)(C)
  \Edge[label=$4$](C)(A)

  \tikzstyle{EdgeStyle}=[post, color=blue, dashed, bend left=10]
  \Edge(B)(D)
  \Edge(D)(B)
  \Edge(C)(E)
  \Edge(E)(C)
\end{tikzpicture}
\end{minipage}
\captionof{figure}{Illustration of the argument of Proposition~\ref{saving_prop1}.}
\label{fig:prop1}
\end{center}
\end{figure}
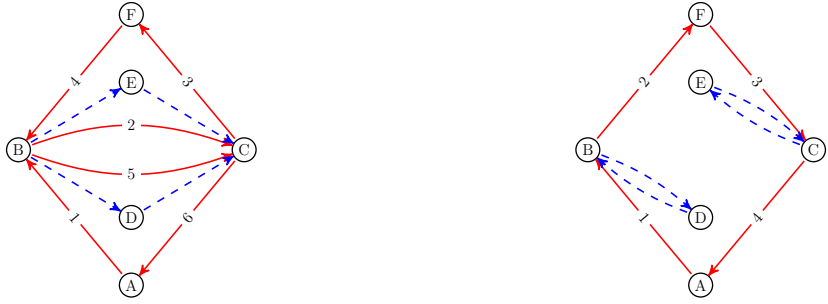

The idea of the transformation $\pi\longmapsto\pi'$ is depicted in Figure~\ref{fig:prop1}. The three conditions on Proposition~\ref{saving_prop1} are met by most of the problem settings in the related literature; this fact justifies their implicit assumption that arc-retraversing solutions do not lead to a strictly lower completion time. Moreover, these conditions are minimal. Indeed, Proposition~\ref{saving_prop1} does not hold when two drones are available (as implied by the solutions shown in Figures \ref{fig:twice_sol} and \ref{fig:once_sol}), or sorties are non-invertible (in particular, the inverted path $\pi_{ji}^{-1}$ might require the drone to fly along infeasible sorties), or sorties serve two customers (see Figure~\ref{fig:thrice_sol}). For example, in the problem setting of \cite{agatz18}, Proposition~\ref{saving_prop1} holds, and therefore it is correct to solve the problem via an IP model that does not allow arc-retraversing solutions. However, if we wanted to generalize the setting as to incorporate multiple drones, or payloads in the energy consumption of a sortie, then excluding arc-retraversing solutions may lead to a strictly higher completion time.
An analogous result to Proposition~\ref{saving_prop1} also holds in a slightly different setting, when the drone does not travel faster than the truck.

\begin{prop}
\label{saving_prop2}
At least one optimal solution is not arc-retraversing if only a single drone is available, all sorties are invertible, $N^{\mathit{tr}}=N$ and $\alpha=1$.
\end{prop}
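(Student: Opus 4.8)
The plan is to mimic the strategy of Proposition~\ref{saving_prop1}, but to exploit the two new hypotheses $N^{\mathit{tr}}=N$ and $\alpha=1$ in order to accommodate sorties that serve several customers. I would start from an optimal solution enjoying the properties of Lemmas~\ref{lemma1} and~\ref{lemma2}, with truck route $\pi$. If no arc is retraversed there is nothing to prove, so assume the truck traverses some arc $\left(i,j\right)$ at least twice and write $\pi=\pi_{0i}\circ\left(i,j\right)\circ\pi_{ji}\circ\left(i,j\right)\circ\pi_{j0}$, where $\pi_{ji}$ is a truck sub-route from $j$ to $i$ between two consecutive traversals. Because $m=1$, Lemma~\ref{lemma2} partitions the sorties attached to this portion into two classes: those whose support is exactly the arc $\left(i,j\right)$ --- at most two of them, say $\sigma_{1}$ and $\sigma_{2}$, launched at $i$ and retrieved at $j$ during the two traversals and serving customer sequences $K_{1}$ and $K_{2}$ --- and those whose support is contained in $\pi_{ji}$ (or in $\pi_{0i}$, $\pi_{j0}$), which I would leave untouched.

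The central transformation replaces each direct traversal of $\left(i,j\right)$ by a truck path that threads through the customers the drone used to serve. Concretely, for the traversal carrying $\sigma_{t}=\left(i,v_{1},\dots,v_{s},j\right)$ I let the truck travel $i\to v_{1}\to\dots\to v_{s}\to j$, which is admissible because $N^{\mathit{tr}}=N\supseteq N^{\mathit{dr}}$ contains every $v_{p}$. The sub-route $\pi_{ji}$ and its internal sorties are kept verbatim, and the single drone, now relieved of $\sigma_{1}$ and $\sigma_{2}$, is idle during the two new segments and free to perform the sorties supported inside $\pi_{ji}$ exactly as before. The key estimate is that this costs no time: provided $\alpha=1$ yields $\ell_{ab}\le\ell'_{ab}$ on every arc flown by the drone, the length of the truck path is at most the duration $\ell'_{\sigma_{t}}$ of the sortie it replaces, and since the drone sortie was then the bottleneck of that traversal's segment (the truck needed only $\ell_{ij}\le\ell'_{\sigma_{t}}$ and had to wait for the drone at $j$), the completion time does not increase.

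For termination I would track the potential $\Phi=\sum_{a}\max\{0,m_{a}-1\}$, where $m_{a}$ is the truck-multiplicity of arc $a$. The transformation lowers $m_{\left(i,j\right)}$ by two, and every arc it introduces is incident to a customer of $K_{1}\cup K_{2}$; since $K_{1}\cap K_{2}=\varnothing$ the two inserted paths are arc-disjoint, and the inserted arcs did not previously carry the truck, so no new retraversal is created. Hence $\Phi$ strictly decreases, and iterating drives it to zero, i.e.\ to a non-arc-retraversing optimal solution. The one configuration not covered by a customer path is a traversal of $\left(i,j\right)$ that carries no sortie at all; there $K_{t}=\varnothing$ and there is no customer to route through, so I would instead invoke invertibility --- exactly as in Proposition~\ref{saving_prop1} --- reversing $\pi_{ji}$ together with the sorties supported in it to reconnect $i$ to $j$ while deleting the superfluous copy of $\left(i,j\right)$.

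The main obstacle is the time accounting, for two reasons. First, the inequality $\ell_{ab}\le\ell'_{ab}$ supplied by $\alpha=1$ is only guaranteed when both endpoints lie in $N^{\mathit{dr}}\cap N^{\mathit{tr}}$; the launch and landing nodes $i$ and $j$ of a sortie need not belong to $N^{\mathit{dr}}$, so the first and last arcs $\left(i,v_{1}\right)$ and $\left(v_{s},j\right)$ of the replacement path require a separate argument (or the mild assumption that launch and retrieval happen at drone-eligible nodes), and it is precisely these endpoint arcs that decide whether the drone or the truck was the bottleneck of the traversal. Second, one must verify carefully that re-timing the truck and the idle drone never lengthens the makespan once waiting at synchronization points is taken into account; this is where the bottleneck property of the sortie and Lemma~\ref{lemma1} (no redundant visits) are used. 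Once these two points are secured, the potential argument closes the proof.
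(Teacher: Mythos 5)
Your core transformation is the same as the paper's: use $N^{\mathit{tr}}=N$ to let the truck physically follow the drone's sortie path in place of a traversal of $\left(i,j\right)$, use $\alpha=1$ plus the triangle inequality to argue the segment's duration (which was governed by the sortie, since $\ell_{ij}\leq\ell'_{\sigma_t}$) does not increase, and fall back on invertibility of $\pi_{ji}$ when a traversal carries no sortie. The time accounting you sketch is essentially the argument the paper gives, and your worry about the first and last arcs of the replacement path (whose endpoint $i$ or $j$ need only lie in $N^{\mathit{tr}}$, so $\alpha=1$ does not formally bound $\ell_{iv_1}/\ell'_{iv_1}$) is a legitimate subtlety, but it is one the paper's own proof glosses over in exactly the same way, so it is not what separates your argument from theirs.

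The genuine gap is in your termination argument. Your potential $\Phi=\sum_{a}\max\{0,m_{a}-1\}$ need not strictly decrease: the assertion that ``the inserted arcs did not previously carry the truck'' is unjustified. An arc such as $\left(i,v_{1}\right)$ may already appear on the truck route --- $i$ is a truck node that can be the launch point of several sorties and can be incident to truck arcs elsewhere in $\pi_{0i}$, $\pi_{ji}$, or $\pi_{j0}$, and nothing in Lemmas~\ref{lemma1} or \ref{lemma2} forbids the truck from already traversing an arc into a drone-served customer; moreover, after one application of your transformation the truck route does contain such arcs, so later applications at other retraversed arcs can collide with them. In the worst case the removal at $\left(i,j\right)$ lowers $\Phi$ by one while a single collision raises it by one, so $\Phi$ stalls and the iteration need not terminate at a non-arc-retraversing solution. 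The paper avoids this by measuring progress with a different quantity: the total number of arcs, counted with multiplicity, traversed by the truck \emph{and} the drone together. Replacing one drone sortie of $s+1$ arcs plus one truck arc $\left(i,j\right)$ by $s+1$ truck arcs reduces this total by exactly one, regardless of whether new retraversals are created, and since the total is a positive integer the process must stop, which can only happen at a solution that is not arc-retraversing. (The paper also only rewrites one traversal of $\left(i,j\right)$ per step --- the one during which the drone is airborne --- rather than both, which makes the mixed case of one sortie-carrying and one empty traversal trivial to handle; your treatment of that case is left vague.) To repair your proof, replace $\Phi$ by the paper's combined arc count, or supply an argument that genuinely rules out collisions between inserted arcs and existing truck arcs.
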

\begin{proof}
Consider an optimal solution with the properties described in Lemma~\ref{lemma2}, and suppose that it is arc-retraversing. Analogously to Proposition~\ref{saving_prop1}, we denote one of the retraversed arcs by $\left( i,j\right)$, and the truck route by $\pi$. Let $\pi_{ji}$, $\pi_{0i}$ and $\pi_{j0}$ be the analogous paths to those defined in the proof of Proposition~\ref{saving_prop1}; accordingly, $\pi = \pi_{0i}\circ \left( i,j\right) \circ \pi_{ji} \circ \left( i,j\right) \circ \pi_{j0}$. The proof is complete if we show that there is always another solution $\pi'$ with the same completion time, such that the total number of arcs (with multiplicity) that are traversed by the truck and the drone decreases by at least one unit.

Without loss of generality, we can assume that the drone is airborne during at least one truck traversal of $\left( i,j\right)$, either immediately before the traversal of $\pi_{ji}$ or immediately after. Indeed, if not, then we remove arc $\left( i,j\right)$ twice from the route and invert $\pi_{ji}$ (the relevant sorties are invertible by hypothesis, and their support is contained in $\pi_{ji}$ by Lemma~\ref{lemma2}): the solution induced by the truck route $\pi'=\pi_{0i}\circ \pi_{ji}^{-1} \circ \pi_{j0}$ is not longer than $\pi$. Suppose that the drone flies along a path $\pi^{\mathit{dr}}_{ij}$ during the truck traversal of arc $\left( i,j\right)$ immediately before the traversal of $\pi_{ji}$ (the proof is analogous with the traversal of $\left( i,j\right)$ immediately after $\pi_{ji}$). Then, instead of launching the drone along $\pi^{\mathit{dr}}_{ij}$, we instead let the truck traverse it. This is feasible because $N^{\mathit{tr}}=N$, and it does not take (strictly) longer than the drone's sortie, because $\alpha =1$.

After routing the truck along $\pi^{\mathit{dr}}_{ij}$, we follow the original route $\pi$ along $\pi_{ji}\circ  \left( i,j\right)$. The new truck route given by $\pi'=\pi_{0i}\circ \pi^{\mathit{dr}}_{ij} \circ \pi_{ji} \circ \left( i,j\right) \circ \pi_{j0}$ is feasible and leads to a completion time that is not (strictly) larger than that of $\pi$.
Additionally, in this solution, the total number of arcs (with multiplicity) that are traversed by the truck and the drone is strictly smaller than in the original solution.\end{proof}

The idea of the proof of Proposition~\ref{saving_prop2} is shown in Figure~\ref{fig:prop2}. The set of conditions required in Proposition~\ref{saving_prop2} is minimal. Indeed, we would not be able to replace the truck route $\left( i,j\right)$ with $\pi^{\mathit{dr}}_{ij}$ if $N^{\mathit{tr}}\neq N$ or $\alpha >1$. Sorties must be invertible to include cases where an arc $\left( i,j\right)$ is traversed twice by the truck while the drone is not airborne. The route transformation $\pi\longmapsto\pi'$ of Proposition~\ref{saving_prop2} is functional to the \emph{a posteriori} upper bounds described in Section~\ref{subsec_aposteriori}.

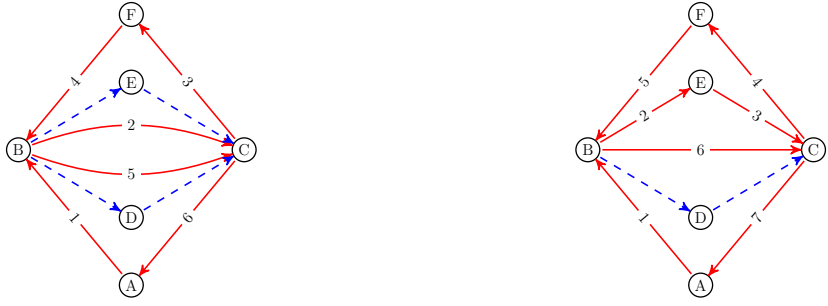
\begin{figure}
\begin{center}
\begin{minipage}{0.45\textwidth}
\centering
\begin{tikzpicture}[scale=0.3,transform shape]
  \tikzset{VertexStyle/.append style={scale=1.67}}
  \Vertex[x=0.0,y=0.0,L={A}]{A}
  \Vertex[x=-5.0,y=6.0,L={B}]{B}
  \Vertex[x=5.0,y=6.0,L={C}]{C}
  \Vertex[x=0.0,y=3.0,L={D}]{D}
  \Vertex[x=0.0,y=9.0,L={E}]{E}
  \Vertex[x=0.0,y=12.0,L={F}]{F}
  
  \tikzstyle{LabelStyle}=[scale=1.67,fill=white,sloped]
  \tikzstyle{EdgeStyle}=[post, color=red]
  \Edge[label=$1$](A)(B)
  \Edge[label=$3$](C)(F)
  \Edge[label=$4$](F)(B)
  \Edge[label=$6$](C)(A)
  
  \tikzstyle{EdgeStyle}=[post, color=red, bend left=20]
  \Edge[label=$2$](B)(C)
  \tikzstyle{EdgeStyle}=[post, color=red, bend right=20]
  \Edge[label=$5$](B)(C)
  
  \tikzstyle{EdgeStyle}=[post, color=blue, dashed]
  \Edge(B)(D)
  \Edge(D)(C)
  \Edge(B)(E)
  \Edge(E)(C)
\end{tikzpicture}
\end{minipage}
\begin{minipage}{0.45\textwidth}
\centering
\begin{tikzpicture}[scale=0.3,transform shape]
  \tikzset{VertexStyle/.append style={scale=1.67}}
  \Vertex[x=0.0,y=0.0,L={A}]{A}
  \Vertex[x=-5.0,y=6.0,L={B}]{B}
  \Vertex[x=5.0,y=6.0,L={C}]{C}
  \Vertex[x=0.0,y=3.0,L={D}]{D}
  \Vertex[x=0.0,y=9.0,L={E}]{E}
  \Vertex[x=0.0,y=12.0,L={F}]{F}
  
  \tikzstyle{LabelStyle}=[scale=1.67,fill=white,sloped]
  \tikzstyle{EdgeStyle}=[post, color=red]
  \Edge[label=$1$](A)(B)
  \Edge[label=$2$](B)(E)
  \Edge[label=$3$](E)(C)
  \Edge[label=$4$](C)(F)
  \Edge[label=$5$](F)(B)
  \Edge[label=$6$](B)(C)
  \Edge[label=$7$](C)(A)

  \tikzstyle{EdgeStyle}=[post, color=blue, dashed]
  \Edge(B)(D)
  \Edge(D)(C)
\end{tikzpicture}
\end{minipage}
\captionof{figure}{Illustration of the argument of Proposition~\ref{saving_prop2}.}
\label{fig:prop2}
\end{center}
\end{figure}

\section{Excluding arc-retraversing solutions}
\label{sec_notincluding}
By excluding arc-retraversing solutions in the TSP-mD, the optimal value might increase. In Sections~\ref{subsec_apriori} and \ref{subsec_aposteriori}, we identify conditions under which \emph{a priori} (i.e., solution-independent) and \emph{a~posteriori} (i.e., solution-dependent) upper bounds hold on such increase, respectively.
\subsection{\emph{A priori} upper bounds on the increase of the completion time}
\label{subsec_apriori}
In Section~\ref{sec_prob_descr}, we have defined two restrictions of the TSP-mD, namely the m-CYCLE and the \mbox{m-CIRCUIT}; by inequality~\eqref{relax}, any upper bound on $\frac{\text{m-CYCLE}}{\text{TSP-mD}}$ also holds on $\frac{\text{m-CIRCUIT}}{\text{TSP-mD}}$.\medskip
$\;$We identify \emph{a~priori} upper bounds on the former ratio $\frac{\text{m-CYCLE}}{\text{TSP-mD}}$ that depend on $m$ and $\alpha$.

\begin{prop}
If the truck can visit every node, then it holds that
\begin{flalign}
\label{already}
\text{m-CYCLE} \leq \left( 1+\alpha m\right) \text{TSP-mD}.&&
\end{flalign}
\end{prop}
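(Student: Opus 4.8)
The plan is to use the chain~\eqref{relax} as an accounting device but to build an m-CYCLE solution \emph{directly}, since the cruder bound $\text{m-CYCLE}\leq\text{TSP}$ (serving every customer by the truck alone) would force the truck to re-walk the drone sorties and thereby risk a spurious factor of two. Fix an optimal TSP-mD solution and let $\mathrm{OPT}$ denote its completion time. First I would record the two lower bounds that drive everything. Because the whole operation finishes by time $\mathrm{OPT}$, the truck's closed walk has $\ell$-length at most $\mathrm{OPT}$; and each drone is airborne for total $\ell'$-time at most $\mathrm{OPT}$, so $\sum_{\pi}\ell'_{\pi}\leq m\,\mathrm{OPT}$, the sum being over all sorties flown in the solution. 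Since $N^{\mathit{tr}}=N$, every arc flown by a drone joins two truck-reachable nodes, so on those arcs $\ell_{ij}\leq\alpha\,\ell'_{ij}$; hence the sortie arcs have total $\ell$-length at most $\alpha\sum_{\pi}\ell'_{\pi}\leq\alpha m\,\mathrm{OPT}$.

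Next I would construct the m-CYCLE solution. By Lemma~\ref{lemma1} I may assume an optimal solution in which every repeated truck visit coincides with a drone operation; shortcutting the truck's closed walk then yields a simple cycle $C$ through the visited nodes, of $\ell$-length at most $\mathrm{OPT}$, with every launch and landing node preserved on $C$. I would keep the drones serving exactly the customers they served before, flying the same (one-way) sorties, so that the drone effort is counted \emph{once} rather than doubled. The completion time of the resulting schedule I would bound by the time for the truck to traverse $C$ (at most $\mathrm{OPT}$, contributing the leading $1$) plus the extra time needed to wait for and coordinate the sorties, which I would charge to the drones' flight arcs and bound by $\alpha m\,\mathrm{OPT}$ using the estimate above. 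This gives the target $\text{m-CYCLE}\leq\left(1+\alpha m\right)\text{TSP-mD}$.

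The step I expect to be the main obstacle is the coordination forced by the single-visit constraint: in $C$ the truck passes each launch/landing node only once, whereas in the original optimum the same node may have been used at several distinct times for different drones. I would need to re-time the (at most $m$) sorties around the single truck cycle so that each is launched and retrieved at its node's unique visit, and then argue that the total delay introduced is at most $\alpha m\,\mathrm{OPT}$ rather than accumulating once per sortie. Making this scheduling argument precise—pairing each sortie with the truck's progress along $C$ so the flight arcs are absorbed into parallel idle time, and checking the boundary case of sortie arcs incident to launch/landing nodes lying outside $N^{\mathit{dr}}$—is the delicate part; once the schedule is shown feasible with makespan at most $\left(1+\alpha m\right)\mathrm{OPT}$, the inequality follows immediately, and tightness would be exhibited separately by a matching family of instances.
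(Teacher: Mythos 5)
The paper's proof of \eqref{already} is a two-line composition: $\text{m-CYCLE}\leq\text{TSP}$ comes directly from the restriction chain \eqref{relax} (once $N^{\mathit{tr}}=N$, a Hamiltonian truck tour with the drones left idle is a feasible m-CYCLE solution), and $\text{TSP}\leq\left(1+\alpha m\right)\text{TSP-mD}$ is imported as a known result from \cite{wang17}. You reject this route because of a feared ``spurious factor of two,'' but that worry concerns how one would \emph{reprove} the second inequality from scratch; the paper does not reprove it, it cites it. Having discarded the short argument, you then attempt a direct construction of an m-CYCLE schedule, and that construction does not close.

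Two concrete gaps. First, the accounting is inconsistent: you bound the total truck-length of the drone arcs by $\alpha\sum_{\pi}\ell'_{\pi}\leq\alpha m\,\mathrm{OPT}$ and then charge the ``coordination delay'' to this quantity, but that bound is only relevant if the \emph{truck} traverses the drone arcs. In your plan the drones keep flying their sorties, so the truck's extra idle time per sortie is at most the flight duration $\ell'_{\pi}$, and the natural total is $\sum_{\pi}\ell'_{\pi}\leq m\,\mathrm{OPT}$ --- no factor $\alpha$ arises there. You are mixing the ``truck absorbs the drone arcs'' argument (which produces $\alpha$ but needs an Eulerian-type closure of the non-loop sortie paths) with the ``truck waits for the drones'' argument (which produces waiting time measured in $\ell'$), and neither is carried through. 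Second, the step you yourself flag as delicate is where the whole proof lives, and it is not merely delicate: after shortcutting to a simple cycle $C$, a sortie's launch node may follow its retrieval node in the cyclic order; several sorties originally launched at distinct visits of the same node must all be launched at its unique visit on $C$; and a non-loop sortie cannot simply be waited out at its launch node without re-routing the drone, which raises battery-feasibility issues. Handling exactly these obstructions is the content of Lemma~\ref{lemmaapriori} and the proof of \eqref{2m}, and doing it honestly costs the very factor $2$ per sortie you set out to avoid, landing at $1+2m$ rather than $1+\alpha m$ (the two bounds are incomparable). The repair is simply to take the paper's route: invoke \eqref{relax} for $\text{m-CYCLE}\leq\text{TSP}$ and the cited bound of \cite{wang17} for $\text{TSP}\leq\left(1+\alpha m\right)\text{TSP-mD}$.
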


Inequality~\eqref{already} follows from $\text{m-CYCLE}\leq \text{TSP}$ and from $\text{TSP}\leq\left( 1+\alpha m\right)\text{TSP-mD}$; the latter inequality was shown by~\cite{wang17} in an analogous setting. We now describe other \emph{a priori} upper bounds, that only depend on $m$ and that dominate inequality~\eqref{already} for all $\alpha>2$. For sake of clarity, we first prove a preliminary result by the following lemma.

\begin{lem}
\label{lemmaapriori}
Let $\pi=\left( i_{1}, \ldots, i_{r}\right)$ be a feasible non-loop sortie. For every $t\in\left\lbrace 2, \ldots, r-1\right\rbrace$, at least one sortie out of \mbox{$\pi_{1}=\left( i_{1}, \ldots, i_{t}, i_{1}\right)$} and $\pi_{2}=\left( i_{r}, \dots, i_{t}, i_{r}\right)$ is feasible.
\end{lem}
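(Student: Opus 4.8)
The plan is to compare the energy consumption \eqref{energy_consumption} of the two candidate loops $\pi_{1}$ and $\pi_{2}$ directly against that of $\pi$, and to show that \emph{at least one} of them consumes no more energy than $\pi$ itself. Since $\pi$ is feasible, its consumption is at most $B$, so the cheaper loop is feasible as well. To set up the bookkeeping I would write $a=\sum_{q=1}^{t-1}\ell'_{i_{q}i_{q+1}}$ for the flight distance from $i_{1}$ to $i_{t}$ along $\pi$ and $b=\sum_{q=t}^{r-1}\ell'_{i_{q}i_{q+1}}$ for the flight distance from $i_{t}$ to $i_{r}$, so that $\ell'_{\pi}=a+b$. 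Because $\ell'$ is symmetric, the flight distances of $\pi_{1}$ and $\pi_{2}$ are $a+\ell'_{i_{t}i_{1}}$ and $b+\ell'_{i_{t}i_{r}}$, and the triangle inequality for the metric $\ell'$ gives $\ell'_{i_{t}i_{1}}\le a$ and $\ell'_{i_{t}i_{r}}\le b$. I then expect the argument to split into the two cases $a\le b$ and $a\ge b$.

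First I would treat the case $a\le b$ and claim that $\pi_{1}=\left( i_{1},\dots,i_{t},i_{1}\right)$ is feasible. In $\pi_{1}$ exactly the customers $i_{2},\dots,i_{t}$ are served, and each is carried over the \emph{same} distance $\sum_{q=1}^{p-1}\ell'_{i_{q}i_{q+1}}$ from the launch $i_{1}$ as in $\pi$; hence the payload part of the consumption of $\pi_{1}$ is obtained from that of $\pi$ by deleting the nonnegative terms for $i_{t+1},\dots,i_{r-1}$. For the $w^{\mathit{dr}}$-part, the flight distance changes from $a+b$ to $a+\ell'_{i_{t}i_{1}}$, which does not increase because $\ell'_{i_{t}i_{1}}\le a\le b$. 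Combining these two observations shows that the consumption of $\pi_{1}$ is at most that of $\pi$, hence at most $B$.

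The case $a\ge b$ is handled by the symmetric candidate $\pi_{2}=\left( i_{r},\dots,i_{t},i_{r}\right)$, and here the payload accounting is the delicate point where I expect the real work to lie. In $\pi_{2}$ the customers $i_{t},\dots,i_{r-1}$ are served, each now carried from the launch $i_{r}$ over $d(i_{p}):=\sum_{q=p}^{r-1}\ell'_{i_{q}i_{q+1}}$, whereas in $\pi$ the same customer $i_{p}$ is carried from $i_{1}$ over $\sum_{q=1}^{p-1}\ell'_{i_{q}i_{q+1}}=a+\bigl(b-d(i_{p})\bigr)$. The key computation is that the per-customer difference (consumption in $\pi$ minus in $\pi_{2}$) equals $(a-b)+2\bigl(b-d(i_{p})\bigr)\ge 0$, which holds precisely because $a\ge b$ and $d(i_{p})\le b$; thus no payload term increases. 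Together with $\ell'_{i_{t}i_{r}}\le b\le a$ for the $w^{\mathit{dr}}$-part and the deletion of the nonnegative terms for $i_{2},\dots,i_{t-1}$, this again shows that the consumption of $\pi_{2}$ is at most that of $\pi$. Combining the two cases gives $\min\{\,\text{consumption of }\pi_{1},\ \text{consumption of }\pi_{2}\,\}\le\text{consumption of }\pi\le B$, proving the lemma.

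Two points I would make explicit. Both $\pi_{1}$ and $\pi_{2}$ are genuine sorties: their endpoints $i_{1},i_{r}\in N^{\mathit{tr}}$, their intermediate nodes lie in $N^{\mathit{dr}}$, and each serves at least one customer since $2\le t\le r-1$. The main obstacle is exactly the asymmetry of the payload term under reversing the direction of flight: this is why a naive averaging bound of the form $\text{consumption}(\pi_{1})+\text{consumption}(\pi_{2})\le 2\,\text{consumption}(\pi)$ fails in general, and why the case distinction on $a$ versus $b$ — which ensures that whichever loop we keep measures its payload distances from the ``closer'' endpoint — is the device that makes the estimate go through.
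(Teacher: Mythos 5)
Your proof is correct, and it rests on the same underlying computation as the paper's---comparing the energy of a reversed half-loop against that of $\pi$ via the triangle inequality and the relative lengths of the two halves $a=\bar{\ell}\left( 1,t\right)$ and $b=\bar{\ell}\left( t,r\right)$ in the paper's notation---but it organizes the argument differently. The paper argues conditionally: it supposes $\pi_{1}$ is infeasible, subtracts the feasibility bound for $\pi$ from the resulting violated bound for $\pi_{1}$ to extract $\bar{\ell}\left( 1,t\right)>\bar{\ell}\left( t,r\right)$ (your $a>b$), and only then runs the chain of estimates showing that $\pi_{2}$ consumes no more than $\pi$. You instead split directly on $a\le b$ versus $a\ge b$ and show in each case that the corresponding loop consumes no more energy than $\pi$; this yields the slightly stronger, unconditional fact that at least one of $\pi_{1}$ and $\pi_{2}$ is no more energy-expensive than $\pi$, and makes the two cases symmetric, at the cost of having to verify the easy case $a\le b$ explicitly (which the paper's contradiction setup absorbs for free). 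Your per-customer accounting $w_{i_{p}}\left( a+b-2d\!\left( i_{p}\right)\right)\ge w_{i_{p}}\left( a-b\right)\ge 0$ is exactly the paper's inequality $\bar{\ell}\left( p,r\right)\le\bar{\ell}\left( 1,p\right)$ for $p\ge t$, and your closing remark that a naive averaging bound fails because of the payload asymmetry correctly identifies why the case distinction on $a$ versus $b$ is the essential device.
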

\begin{proof} For the sake of notation, we define the quantity $\bar{\ell}\left( p_{1},p_{2}\right) =\sum_{q=p_{1}}^{p_{2}-1}\ell'_{i_{q}i_{q+1}}$, for every indices $p_{1}$ and $p_{2}$ that satisfy $p_{1}< p_{2}\leq r$. Because $\pi$ is feasible, it holds that
\begin{equation}
\label{battery}
    w^{\mathit{dr}}\cdot\bar{\ell}\left( 1, r\right) +\sum_{p=2}^{r-1}\left(w_{i_{p}}\cdot\bar{\ell}\left( 1, p\right)\right) \leq B,
\end{equation}
where the left-hand side quantifies the energy consumption of $\pi$. Let $t$ and $\pi_{1}$ be an index and a sortie like in the hypothesis, respectively. Suppose that $\pi_{1}$ is not feasible. Because $\ell'$ is a metric, it holds that $\bar{\ell}\left( 1, t\right)\geq \ell'_{1t}$. Then, the following quantity is not lower than the energy consumption of $\pi_{1}$:
\begin{equation}
\label{tooheavy}
    2w^{\mathit{dr}}\cdot\bar{\ell}\left( 1, t\right) +\sum_{p=2}^{t}\left( w_{i_{p}}\cdot\bar{\ell}\left( 1, p\right)\right) > B.
\end{equation}
We want to show that $\pi_{2}=\left( i_{r}, \dots, i_{t}, i_{r}\right)$ is feasible. By subtracting the left-hand side of inequality~\eqref{battery} from that of inequality~\eqref{tooheavy}, we deduce that
\begin{equation}
\label{difference}
    w^{\mathit{dr}}\cdot\bar{\ell}\left( 1, t\right) > w^{\mathit{dr}}\cdot\bar{\ell}\left( t, r\right) +\sum_{p=t+1}^{r-1}\left( w_{i_{p}}\cdot\bar{\ell}\left( 1, p\right)\right).
\end{equation}
By dropping the last term on the right-hand side of inequality~\eqref{difference}, we obtain that, for every $p\geq t$,
\begin{equation}
  \label{1p}  \bar{\ell}\left( 1, p\right) \;\geq\; \bar{\ell}\left( 1, t\right) \; >\; \bar{\ell}\left( t, r\right) \;\geq \;\bar{\ell}\left( p, r\right).
\end{equation}
The following chain of inequalities shows that the energy consumption of $\pi_{2}$ is not greater than that of $\pi$, which in turn implies that $\pi_{2}$ is feasible.
\begin{equation}
\label{feasible}
\begin{split}
   &w^{\mathit{dr}}\cdot\left( \ell'_{tr} + \bar{\ell}\left( t, r\right)\right) +\sum_{p=t}^{r-1}\left( w_{i_{p}}\cdot\bar{\ell}\left( p, r\right)\right)\;\leq\\ &\leq\; 2w^{\mathit{dr}}\cdot\bar{\ell}\left( t, r\right) +\sum_{p=t}^{r-1}\left( w_{i_{p}}\cdot\bar{\ell}\left( p, r\right)\right)\;\leq\; 2w^{\mathit{dr}}\cdot\bar{\ell}\left( t, r\right) +\sum_{p=t}^{r-1}\left(w_{i_{p}}\cdot\bar{\ell}\left( 1, p\right)\right)\;\leq\\
    &\leq\; w^{\mathit{dr}}\cdot\bar{\ell}\left( t, r\right) +w^{\mathit{dr}}\cdot\bar{\ell}\left( 1, t\right)+\sum_{p=t}^{r-1}\left(w_{i_{p}}\cdot\bar{\ell}\left( 1, p\right)\right)\;=\; w^{\mathit{dr}}\cdot\bar{\ell}\left( 1, r\right) +\sum_{p=t}^{r-1}\left(w_{i_{p}}\cdot\bar{\ell}\left( 1, p\right)\right)\;\leq\\
    &\leq\; w^{\mathit{dr}}\cdot\bar{\ell}\left( 1, r\right) +\sum_{p=2}^{r-1}\left(w_{i_{p}}\cdot\bar{\ell}\left( 1, p\right)\right).
\end{split}
\end{equation}
The first step of the chain follows from the triangle inequality, while the second and the third steps from inequalities~\eqref{1p}.
\end{proof}

Notice that, with the notation of Lemma~\ref{lemmaapriori}, the sorties $\pi$, $\pi_{1}$, and $\pi_{2}$ satisfy the inequality $\ell'_{\pi_{1}}+\ell'_{\pi_{2}}\leq 2\ell'_{\pi}$ by construction. This fact is crucial to the proof of the following result.

\begin{prop}
It holds that
\begin{flalign}
\label{2m}
\text{m-CYCLE} \leq \left( 1+2m\right) \text{TSP-mD}.&&
\end{flalign}
If the feasible sorties are either loops or contain only one customer, then it further holds that
\begin{flalign}
\label{1m}
\text{m-CYCLE} \leq \left( 1+m\right) \text{TSP-mD}.&&
\end{flalign}
\end{prop}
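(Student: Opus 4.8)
The plan is to take an optimal TSP-mD solution, of value $T^{\star}=\text{TSP-mD}$, and to build from it a feasible m-CYCLE solution (one in which no node is revisited) whose completion time is at most $\left(1+2m\right)T^{\star}$, and at most $\left(1+m\right)T^{\star}$ under the extra hypothesis. Throughout I would use two elementary facts about the optimal solution: the truck route $R$ is a closed walk through the depot with $\ell(R)\le T^{\star}$, and since every drone operates within the time window $\left[0,T^{\star}\right]$, the total duration of all sorties is at most $mT^{\star}$. The guiding idea is to turn every sortie into \emph{loop} sorties (same launch and landing node), so that no sortie forces the truck to occupy two distinct nodes and hence none forces a revisit; crucially, once all sorties are loops, the order in which the truck visits nodes becomes irrelevant to the drones.

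The core step is a per-sortie decomposition. Given a feasible non-loop sortie $\pi=\left(i_{1},\dots,i_{r}\right)$, I would split it at an index $s$ into a prefix loop $P_{s}=\left(i_{1},\dots,i_{s},i_{1}\right)$ serving $i_{2},\dots,i_{s}$ and a suffix loop $Q_{s}=\left(i_{r},\dots,i_{s+1},i_{r}\right)$ serving $i_{s+1},\dots,i_{r-1}$. Choosing $s^{\star}=\max\{s:P_{s}\text{ is feasible}\}$ (well defined since $P_{1}$ is the empty loop), either $s^{\star}=r-1$, so that $P_{s^{\star}}$ already serves all customers and $Q_{s^{\star}}$ is empty, or $P_{s^{\star}+1}$ is infeasible, in which case Lemma~\ref{lemmaapriori} applied at $t=s^{\star}+1$ forces $Q_{s^{\star}}$ to be feasible. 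Either way both loops are feasible, they partition the customers of $\pi$, and they are launched only at the endpoints $i_{1},i_{r}\in N^{\mathit{tr}}$. By the triangle inequality for $\ell'$, exactly as in the remark following Lemma~\ref{lemmaapriori}, their total duration obeys $\ell'_{P_{s^{\star}}}+\ell'_{Q_{s^{\star}}}\le 2\,\ell'_{\pi}$.

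Applying this replacement to every non-loop sortie, and leaving loop sorties untouched, I obtain a family of loops of total duration at most $2mT^{\star}$, all launched at nodes of $R$. I would then shortcut the closed walk $R$ to a cycle $C$ through the depot that visits each node of $R$ exactly once; the triangle inequality for $\ell$ gives $\ell(C)\le\ell(R)\le T^{\star}$, and every launch node survives on $C$. The truck now traverses $C$ and, at each node, launches the loops assigned to it and waits for their return; bounding this waiting by the total loop duration (as arises from performing the loops one at a time on a single drone) yields a feasible m-CYCLE solution in which all customers are served and no node is revisited, with completion time at most $\ell(C)+2mT^{\star}\le\left(1+2m\right)T^{\star}$, i.e.\ inequality~\eqref{2m}.

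For inequality~\eqref{1m} the doubling disappears: a loop is kept as is, and a single-customer sortie $\left(i_{1},i_{2},i_{3}\right)$ is replaced by the shorter of $\left(i_{1},i_{2},i_{1}\right)$ and $\left(i_{3},i_{2},i_{3}\right)$. These two loops have energies differing only by the common positive factor multiplying $\ell'_{i_{1}i_{2}}$ versus $\ell'_{i_{2}i_{3}}$, so the shorter one has the smaller energy and is therefore feasible by Lemma~\ref{lemmaapriori}; its duration is $2\min\{\ell'_{i_{1}i_{2}},\ell'_{i_{2}i_{3}}\}\le\ell'_{\pi}$. Hence the total loop duration is at most $mT^{\star}$ and the completion time at most $\left(1+m\right)T^{\star}$. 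The step I expect to be most delicate is guaranteeing that \emph{both} loops of the decomposition are feasible \emph{while still launching only from} $i_{1}$ and $i_{r}$: this is precisely where the maximality of $s^{\star}$ combined with Lemma~\ref{lemmaapriori} is essential, as it rules out the naive recursion that would otherwise create launch points at interior, possibly drone-only, nodes.
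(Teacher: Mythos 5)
Your proposal is correct and follows essentially the same route as the paper's proof: the same splitting of each non-loop sortie into a maximal feasible prefix loop and a suffix loop whose feasibility is guaranteed by Lemma~\ref{lemmaapriori}, the same factor-$2$ bound on the total loop duration, the same serialized waiting argument after shortcutting the truck walk to a cycle, and the same single-customer refinement for inequality~\eqref{1m}. The only (cosmetic) differences are that you treat general $m$ directly via the bound $m\,T^{\star}$ on total sortie duration rather than first doing $m=1$, and your choice of $s^{\star}$ as a maximum is slightly cleaner than the paper's appeal to a unique switching index.
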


\begin{proof}
 We first prove inequalities \eqref{2m} and \eqref{1m} for the case $m=1$, and we then adapt the argument to any $m\geq 2$. Suppose that $m=1$: in this case, the proof of inequality~\eqref{2m} is complete if, for every feasible solution, we can replace every non-loop sortie $\pi$ by either a single sortie $\pi_{0}$, or by two sorties $\pi_{1}$ and $\pi_{2}$, that satisfy the following conditions:
\begin{enumerate}
\item[i.] $\pi_{0}$, or $\pi_{1}$ and $\pi_{2}$, are feasible loop sorties;
\item[ii.] $N\!\left[ \pi\right] = N\!\left[ \pi_{0}\right]$, or $N\!\left[ \pi\right] = N\!\left[ \pi_{1}\right]\cup N\!\left[ \pi_{2}\right]$;
\item[iii.]  $\ell'_{\pi_{0}}\leq 2\ell'_{\pi}$, or $\ell'_{\pi_{1}}+\ell'_{\pi_{2}}\leq 2\ell'_{\pi}$, respectively.
\end{enumerate}
Indeed, if we can always replace every non-loop sortie $\pi$ by either the corresponding $\pi_{0}$, or by $\pi_{1}$ and $\pi_{2}$, then we let the truck wait at the starting node of every sortie, and we shortcut its route to eliminate every node revisit. The resulting solution is feasible to the \mbox{1-CYCLE} restriction, and its objective value satisfies $\text{1-CYCLE} \leq 3\cdot\text{TSP-1D} = \left( 1+2m\right)\text{TSP-1D}$, because both the travel times of the truck and the drone are not longer than the \mbox{TSP-1D} objective value, and because of condition~(iii.).

Let then $\pi=\left( i_{1}, \ldots, i_{r}\right)$ be a feasible non-loop sortie. We choose the relevant sorties $\pi_{0}$, or $\pi_{1}$ and $\pi_{2}$, as follows. If at least one sortie out of $\left( i_{1}, \ldots, i_{r-1}, i_{1}\right)$ and $\left( i_{r}, \ldots, i_{2}, i_{r}\right)$ is feasible, we choose one and denote it as $\pi_{0}$. Otherwise, by Lemma~\ref{lemmaapriori}, sorties $\left( i_{1}, i_{2}, i_{1}\right)$ and $\left( i_{r}, i_{r-1}, i_{r}\right)$ are both feasible. Therefore, there exists a unique $t\in\left\lbrace 2, \ldots, r-2\right\rbrace$ such that the sortie $\left( i_{1}, \ldots, i_{t}, i_{1}\right)$ is feasible but $\left( i_{1}, \ldots, i_{t+1}, i_{1}\right)$ is not. We can then choose $\pi_{1}=\left( i_{1}, \ldots, i_{t}, i_{1}\right)$ and, by Lemma~\ref{lemmaapriori}, $\pi_{2}=\left( i_{r}, \ldots, i_{t+1}, i_{r}\right)$. The sorties $\pi_{0}$, or $\pi_{1}$ and $\pi_{2}$, satisfy the aforementioned conditions~(i.)-(iii.).

If a non-loop sortie contains only a single customer, i.e., it is of the form $\left( i_{1},i_{2},i_{3}\right)$, then we can choose the shortest arc between $\left( i_{1},i_{2}\right)$ and $\left( i_{2},i_{3}\right)$, and set $\pi_{0}=\left( i_{1},i_{2},i_{1}\right)$ or $\pi_{0}=\left( i_{3}, i_{2}, i_{3}\right)$ accordingly. Notice that in this case, it holds that $\ell'_{\pi_{0}}\leq \ell'_{\pi}$. If the feasible sorties are either loops or satisfy $r=3$, an argument analogous to that for the $\left( 1+2m\right)$ bound leads to the inequality $\text{1-CYCLE} \leq 2\cdot\text{TSP-1D} = \left( 1+m\right)\text{TSP-1D}$.

If $m\geq 2$, we can repeat the same construction for the sorties of every drone, but this time the truck has to wait up to $m$ times longer for all the drones to perform their sorties.\end{proof}

We complement the results of the previous proposition by showing that inequalities \eqref{2m} and \eqref{1m} are asymptotically tight with a single drone, and that the ratio $\frac{\text{m-CYCLE}}{\text{TSP-mD}}$ cannot be upper-bounded by any constant. In particular, we cannot replace $m$ by any constant in the right-hand sides of inequalities~\eqref{2m} and \eqref{1m}.

\begin{prop}
\label{prop_dependm}
For $m=1$, the bounds \eqref{2m} and \eqref{1m} are asymptotically tight. Moreover, for every~$m$, there exists an instance for which $\frac{\text{m-CYCLE}}{\text{TSP-mD}}\geq m$.
\end{prop}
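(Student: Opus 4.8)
The plan is to prove the three claims one at a time, in each case exhibiting an explicit family of instances and exploiting the fact that the truck metric $\ell$ and the drone metric $\ell'$ may be chosen independently (they are linked only through the definition of $\alpha$). The common mechanism is that node revisits let the $m$-$\mathrm{D}$ truck overlap drone work with its own movement, whereas the simple-cycle constraint of $m$-CYCLE forbids exactly the revisits that make this overlap possible.

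I would first treat the lower bound $\frac{\text{m-CYCLE}}{\text{TSP-mD}}\ge m$, which is what rules out a constant bound. I would place $m$ drone-only customers $c_{1},\ldots,c_{m}$ together with launch nodes $\ell_{1},\ldots,\ell_{m}\in N^{\mathit{tr}}$, choosing $\ell'$ so that the \emph{only} feasible sortie serving $c_{k}$ is the battery-tight loop $(\ell_{k},c_{k},\ell_{k})$ of duration $L$, while choosing $\ell$ so that the launch nodes form an arbitrarily small cluster around the depot. In the TSP-mD the truck launches the $m$ loops in one short sweep across the cluster and retrieves them in a second sweep, so the sorties run in parallel and the completion time is $L$ plus a vanishing travel overhead. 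In the m-CYCLE the truck visits each $\ell_{k}$ only once, so it must launch and retrieve the loop there during that single visit, i.e.\ wait out its full duration $L$; since the truck occupies one node at a time these $m$ waits are sequential and the completion time is at least $mL$. Sending the truck-metric scale to zero drives the ratio to $m$; establishing the clean inequality $\ge m$ (rather than merely approaching it) is a matter of showing that the unavoidable launch-sweep overhead of the TSP-mD solution is dominated by the instance parameters, which I flag below.

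For the tightness of \eqref{1m} with $m=1$ I would use a single gadget of the same flavour. Take one drone-only customer $c$ whose only feasible sortie is a loop $(v,c,v)$ of duration $L$, one truck-only customer $t$ reachable from $v$ by a detour of truck-length $L$, and the depot adjacent to $v$. In the TSP-1D the truck launches the loop at $v$, drives $v\to t\to v$ \emph{while the drone is airborne} (revisiting $v$), and then retrieves the drone, so truck work and drone work fully overlap and the completion time is $L+o(1)$. Any 1-CYCLE solution visits $v$ once, hence must wait out the whole loop, and must additionally cover $t$ on its cycle; because the sortie serves a single customer there is no loop-doubling (as in the single-customer case of the preceding proof, $\ell'_{\pi_{0}}\le \ell'_{\pi}$), so the completion time is $2L+o(1)$ and the ratio tends to $2$.

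The tightness of \eqref{2m} is the delicate case and where I expect the main obstacle. Here I would force the drone to serve customers through efficient one-way multi-customer sorties whose launch and landing nodes differ, arranged along a chain of spread-out mandatory truck-customers of total length $\approx T$ that the truck must traverse in every feasible solution. In the TSP-1D the truck follows a revisiting walk that lets each such sortie overlap a backward leg of the route, so truck work and drone work both equal $T$ and overlap, giving completion $\approx T$. In any 1-CYCLE solution the simple cycle forbids the backward legs, so these sorties must be replaced by loops; by the tightness of the inequality $\ell'_{\pi_{1}}+\ell'_{\pi_{2}}\le 2\ell'_{\pi}$ recorded just after Lemma~\ref{lemmaapriori}, a one-way sortie of length $T$ costs up to $2T$ as loops, and these loops are now served sequentially \emph{on top of} the truck's own tour of length $\approx T$, yielding completion $\approx 3T$. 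The hard part is entirely on the lower-bound side: I must design the geometry so that every 1-CYCLE solution is genuinely forced both to serialise the drone work and to incur the full doubling, argue that no cleverer simple cycle does better, and keep the additive truck-travel terms negligible, so that the three ratios converge exactly to $2$, $3$, and $m$.
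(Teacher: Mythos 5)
Two of your three constructions are essentially the paper's. For the $(1+m)$ bound and for the $\frac{\text{m-CYCLE}}{\text{TSP-mD}}\geq m$ claim, the paper likewise uses battery-tight loop sorties whose launch node the m-CYCLE truck may visit only once, so that it must wait out each loop where it stands, padded either with truck-only customers of matching detour length (factor $2$) or with $m$ launch nodes at mutual distance $\epsilon$ forming a regular $m$-tope around the depot (factor $m$). Your caveat that one only gets a ratio \emph{tending} to $m$ applies equally to the paper's own instance, whose stated values $m(1+\epsilon)$ and $1+m\epsilon$ give a ratio strictly below $m$ for every $\epsilon>0$; asymptotic tightness is what is really established there too, so I would not count that against you.

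The genuine gap is the tightness of \eqref{2m}, which you correctly flag as the delicate case but leave as a design problem with the crucial 1-CYCLE lower bound open. The paper closes it with a geometry that makes the lower bound a triviality rather than a global argument over all Hamiltonian cycles: instead of a spread-out chain, it uses \emph{two} truck hubs $0$ and $1$ at distance $\epsilon$, with $2k$ drone pairs $i_q,j_q$ satisfying $\ell_{0i_q}=\ell_{1j_q}=1$ and $\ell_{i_qj_q}=\epsilon$, plus $k$ truck-only customers near $0$ and $k$ near $1$ at distance $1$, with $\alpha=1$ and $L=2+\epsilon$ (Figure~\ref{fig:inst}). The TSP-1D truck shuttles across the $\epsilon$-edge $2k$ times, each crossing supporting a one-way two-customer sortie $(0,i_q,j_q,1)$ or $(1,j_q,i_q,0)$ of duration $2+\epsilon$ that fully overlaps a truck detour of length $2$, giving $2k(2+\epsilon)$. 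In any 1-CYCLE solution the nodes $0$ and $1$ are each visited once, so at most two such efficient operations exist at all; every remaining drone customer must be served by a length-$2$ loop and every remaining truck customer by an unoverlapped length-$2$ detour, yielding $2(2+\epsilon)+4(k-1)+4(2k-2)$ and a ratio tending to $3$ with $\epsilon=\nicefrac{1}{k}$. The idea you were missing is precisely to collapse all launch and landing activity onto two nearly coincident hubs: the "backward legs" then cost only $\epsilon$, and the question of whether a cleverer simple cycle does better reduces to counting visits to two nodes, rather than to the open-ended geometric argument your chain layout would require. Note also that Lemma~\ref{lemmaapriori} and the remark $\ell'_{\pi_1}+\ell'_{\pi_2}\leq 2\ell'_{\pi}$ are only needed for the upper bound \eqref{2m}; for tightness one just needs an explicit instance where the doubling and the serialization are both forced, which is what the two-hub construction delivers.
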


\begin{proof} We first show that bound \eqref{2m} is asymptotically tight for $m=1$. Consider the set of instances shown in Figure~\ref{fig:inst}: the truck must visit nodes $0$, $1$, $v_{1}$, $v_{2}$, \dots, $v_{k}$, $w_{1}$, $w_{2}$, \dots, $w_{k}$, while the drone must serve nodes $i_{1}$, $i_{2}$, \dots, $i_{2k}$, $j_{1}$, $j_{2}$, \dots, $j_{2k}$, for any $k\geq 1$; the node $0$ is the depot. The lengths of the arcs are either those shown in the figure or the shortest path between the relevant endpoints. The drone travels equally fast as the truck, i.e., $\ell_{ij}=\ell'_{ij}$ for every arc $\left( i,j\right)\in A$. We consider all-zero payloads, and set $L=2+\epsilon$. An optimal \mbox{TSP-1D} solution can be described as follows: the drone flies along sorties of the form $\left(0, i_{q}, j_{q}, 1\right)$ and $\left(1, j_{q}, i_{q}, 0\right)$ for $q\leq 2k$, while the truck traverses the edge $\left\lbrace 0,1\right\rbrace$ for a total of $2k$ times, and sub-routes of the form $\left( 0, v_{q}, 0\right)$ and $\left( 0, w_{q}, 0\right)$ for $q\leq k$, every time the drone is airborne. This solution leads to an objective value of $2k\left( 2+\epsilon\right)$. An optimal solution to the 1-CYCLE can only serve two customers in a single sortie only twice, and the truck can visit one node $v_{q}$ and $w_{q}$ while the drone is airborne only once; hence, the optimal value for the 1-CYCLE amounts to $2\left( 2+\epsilon\right)+4\left( k-1\right)+4\left( 2k-2\right)$. By choosing $\epsilon=\frac{1}{k}$, we get that $\lim_{k\rightarrow\infty}\frac{\text{1-CYCLE}}{\text{TSP-1D}}=3$.

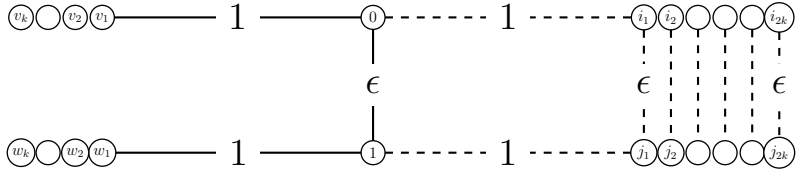
\begin{figure}
    \centering
\begin{tikzpicture}[scale=0.18,transform shape]
  \tikzset{VertexStyle/.append style={scale=2.8}}
  \Vertex[x=0.0,y=0.0,L={0}]{A}
  \Vertex[x=0.0,y=-10.0,L={1}]{B}
  \Vertex[x=20.0,y=0.0,L={$i_{1}$}]{C}
  \Vertex[x=22.0,y=0.0,L={$i_{2}$}]{D}
  \Vertex[x=24.0,y=0.0,L={$\textcolor{white}{.}$}]{E}
  \Vertex[x=26.0,y=0.0,L={$\textcolor{white}{.}$}]{F}
  \Vertex[x=28.0,y=0.0,L={$\textcolor{white}{.}$}]{G}
  \Vertex[x=30.0,y=0.0,L={$i_{2k}$}]{H}
  \Vertex[x=20.0,y=-10.0,L={$j_{1}$}]{I}
  \Vertex[x=22.0,y=-10.0,L={$j_{2}$}]{J}
  \Vertex[x=24.0,y=-10.0,L={$\textcolor{white}{.}$}]{K}
  \Vertex[x=26.0,y=-10.0,L={$\textcolor{white}{.}$}]{L}
  \Vertex[x=28.0,y=-10.0,L={$\textcolor{white}{.}$}]{M}
  \Vertex[x=30.0,y=-10.0,L={$j_{2k}$}]{N}
  \Vertex[x=-20.0,y=0.0,L={$v_{1}$}]{O}
  \Vertex[x=-22.0,y=0.0,L={$v_{2}$}]{P}
  \Vertex[x=-24.0,y=0.0,L={$\textcolor{white}{.}$}]{Q}
  \Vertex[x=-26.0,y=0.0,L={$v_{k}$}]{T}
  \Vertex[x=-20.0,y=-10.0,L={$w_{1}$}]{U}
  \Vertex[x=-22.0,y=-10.0,L={$w_{2}$}]{V}
  \Vertex[x=-24.0,y=-10.0,L={$\textcolor{white}{.}$}]{S}
  \Vertex[x=-26.0,y=-10.0,L={$w_{k}$}]{W}
  
  \tikzstyle{LabelStyle}=[scale=0.67,fill=white]
  \tikzstyle{EdgeStyle}=[scale=10]
  \Edge[label=$\epsilon$](A)(B)
  \tikzstyle{EdgeStyle}=[scale=10, dashed]
  \Edge[label=$1$](A)(C)
  \tikzstyle{EdgeStyle}=[scale=10]
  \Edge[label=$1$](A)(O)
  \tikzstyle{EdgeStyle}=[scale=10, dashed]
  \Edge[label=$1$](B)(I)
  \tikzstyle{EdgeStyle}=[scale=10]
  \Edge[label=$1$](B)(U)
  \tikzstyle{EdgeStyle}=[scale=10, dashed]
  \Edge[label=$\epsilon$](C)(I)
  \Edge(D)(J)
  \Edge(E)(K)
  \Edge(F)(L)
  \Edge(G)(M)
  \Edge[label=$\epsilon$](H)(N)
\end{tikzpicture}
\caption{Illustration of the set of instances in the proof of Proposition~\ref{prop_dependm}, parametrized by $k\geq 1$.}
    \label{fig:inst}
\end{figure}

Analogously, bound \eqref{1m} is asymptotically tight for $m=1$. Consider the instance shown in Figure~\ref{fig:inst} and remove nodes $i_{1}$, \dots, $i_{2k}$, $v_{1}$, \dots, $v_{k}$, $j_{k+1}$, $j_{k+2}$, \dots, $j_{2k}$. Again, the drone travels as fast as the truck, we consider all-zero payloads, and set $L=2$. The only allowed sorties are then loops of the form $\left( 1, j_{q}, 1\right)$, for $q\leq k$. An optimal TSP-1D solution can visit nodes $w_{q}$ for $q\leq k$ while the drone is airborne, and this leads to an optimal value equal to $2\epsilon + 2k$. At the same time, the optimal value for the 1-CYCLE amounts to $2\epsilon + 4k$, which leads to an asymptotic bound of $2$.

Finally, we need to show that the ratio $\frac{\text{m-CYCLE}}{\text{TSP-mD}}$ cannot be upper-bounded by any constant. Let $m>1$, and consider $m$ nodes at the vertices of a regular $m$-tope with edge length $\epsilon$; for each of these nodes $i_{1}, \dots, i_{m}$, we consider nodes $j_{1}, \dots, j_{m}$ such that the length $\ell_{i_{q}j_{q}}=\frac{1}{2}$ for all $q\leq m$. The length of all the other arcs is set to the shortest path between their endpoints. We choose $i_{1}$ as the depot, and set proportional metrics with $\alpha=2$ and $L=1$. The only sorties that can be selected in an optimal solution are loops of the form $\left( i_{q}, j_{q}, i_{q}\right)$ for $q\leq m$. The optimal m-CYCLE objective amounts to $m\left( 1+\epsilon\right)$, while the optimal TSP-mD amounts to $1+m\epsilon$.\end{proof}

\subsection{\emph{A posteriori} upper bounds on the increase of the completion time}
\label{subsec_aposteriori}

In this section, we describe a method to provide a solution-dependent upper bound on the increase of the completion time due to the exclusion of arc-retraversing solutions from the solution space of the TSP-mD. Consider an optimal solution $\mathcal{S}$ to the TSP-mD and let $h=\left( h_{1}, h_{2}, \ldots, h_{\left| A\right|}\right)$ be a vector in $\mathbb{Z}_{+}^{\left| A\right|}$, whose components $h_{a}$ represent the number of times the truck traverses arc $a$ in $\mathcal{S}$, for any $a\in A$. We call $h$ the \emph{retraversing vector} of $\mathcal{S}$.

\begin{prop}
\label{saving_prop4}
Suppose that $N^{\mathit{tr}}=N$, $\alpha\geq 1$ and only a single drone is available. Then, for any optimal solution whose retraversing vector is $h\in \mathbb{Z}_{+}^{\left| A\right|}$, it holds that
\begin{equation}
\text{TSP-mD}\; +\; \left(\sum_{a\in A}\left( h_{a}-1\right)^{+}\right)\cdot\left(\alpha -1\right)\cdot L\; \geq \text{m-CIRCUIT}.
\end{equation}
\end{prop}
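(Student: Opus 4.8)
The plan is to begin from an optimal TSP-mD solution $\mathcal{S}$ that realizes the value $\text{TSP-mD}$ and satisfies the structural properties of Lemma~\ref{lemma2}, with retraversing vector $h$, and to transform it into a solution feasible for the m-CIRCUIT (no arc retraversals) whose completion time exceeds $\text{TSP-mD}$ by at most $\bigl(\sum_{a\in A}(h_a-1)^+\bigr)(\alpha-1)L$. Since the optimal m-CIRCUIT value is bounded above by the completion time of \emph{any} m-CIRCUIT-feasible solution, this yields the claimed inequality. The engine of the transformation is precisely the map $\pi\longmapsto\pi'$ of Proposition~\ref{saving_prop2}, applied repeatedly to the retraversed arcs, with the new ingredient that for $\alpha>1$ each reroute of the truck along a drone sortie is no longer free but costs at most $(\alpha-1)L$.

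Concretely, I would induct on the retraversal count $R(\mathcal{S})=\sum_{a\in A}(h_a-1)^+$, proving the stronger claim that any solution with retraversal count $R$ can be made m-CIRCUIT-feasible at an extra completion-time cost of at most $R(\alpha-1)L$. If $R=0$ the solution is already m-CIRCUIT-feasible and there is nothing to do. Otherwise I pick a retraversed arc $(i,j)$ and write $\pi=\pi_{0i}\circ(i,j)\circ\pi_{ji}\circ(i,j)\circ\pi_{j0}$ as in Proposition~\ref{saving_prop2}, splitting into the same two cases. If the drone is idle during both traversals, I remove both copies of $(i,j)$ by reversing $\pi_{ji}$ (and the sorties it supports, whose supports lie inside $\pi_{ji}$ by Lemma~\ref{lemma2}) exactly as in Proposition~\ref{saving_prop2}; this lowers $R$ at no cost. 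If instead the drone flies a sortie $\pi^{\mathit{dr}}_{ij}$ during one traversal (whose support is the single arc $(i,j)$ by Lemma~\ref{lemma2}), I let the truck retrace $\pi^{\mathit{dr}}_{ij}$ instead of waiting; this is feasible because $N^{\mathit{tr}}=N$, the truck now spends time at most $\alpha\,\ell'_{\pi^{\mathit{dr}}_{ij}}$ in place of the waiting time $\ell'_{\pi^{\mathit{dr}}_{ij}}\le L$, so the completion time grows by at most $(\alpha-1)\,\ell'_{\pi^{\mathit{dr}}_{ij}}\le(\alpha-1)L$, and one copy of $(i,j)$ disappears. In either case the total number of truck-plus-drone arc traversals strictly decreases, which guarantees termination; re-applying the shortcutting of Lemmas~\ref{lemma1}--\ref{lemma2} between steps at no cost re-establishes the hypotheses for the next iteration.

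The crux of the argument, and the step I expect to be the main obstacle, is the cost bookkeeping: I must guarantee that each elementary step lowers $R$ by at least one, so that the inductive hypothesis applies and the costly (airborne) steps total at most $R(\alpha-1)L$. The idle case clearly lowers $R$, so the delicate point is the airborne step, where the truck newly traverses the arcs of $\pi^{\mathit{dr}}_{ij}$: I must argue that these do not create fresh repeated truck arcs that would cancel the removal of a copy of $(i,j)$. I would settle this by observing that the interior nodes of $\pi^{\mathit{dr}}_{ij}$ are served \emph{solely} by the drone and hence, in a shortcut optimal route, lie off the truck's path, so the arcs the truck newly traverses were not truck arcs beforehand; consequently the single removed copy of $(i,j)$ strictly decreases $R$. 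With this decrement secured, the inductive hypothesis gives the bound $R(\alpha-1)L$ at each node of the recursion, and applying it to $\mathcal{S}$ itself yields $\text{m-CIRCUIT}\le\text{TSP-mD}+\bigl(\sum_{a\in A}(h_a-1)^+\bigr)(\alpha-1)L$, as claimed.
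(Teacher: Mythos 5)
Your proposal follows essentially the same route as the paper: the paper's proof is a two-sentence sketch that applies the transformation of Proposition~\ref{saving_prop2} once per excess traversal of each arc with $h_a>1$, charging each airborne case at most $(\alpha-1)\cdot L$ because the rerouted truck spends at most $\alpha\,\ell'_{\pi^{\mathit{dr}}_{ij}}\leq\alpha L$ where it previously waited at least $\ell'_{\pi^{\mathit{dr}}_{ij}}$. Your extra care about whether rerouting the truck along $\pi^{\mathit{dr}}_{ij}$ could itself create fresh retraversals (and hence stall the induction) addresses a point the paper silently glosses over, and is a reasonable refinement rather than a departure.
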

\begin{proof}
The proof follows from transforming the truck route analogously to the proof of Proposition~\ref{saving_prop2} for every arc $a$ such that $h_{a}>1$. Any such route transformation removes one arc-retraversal and instead routes the truck along a sortie of the initial solution; the corresponding increases in the objective value are at most $\left(\alpha -1\right)\cdot L$, because no sortie can be strictly longer than $L$.
\end{proof}

When the conditions of Proposition~\ref{saving_prop4} are met, and given an optimal solution to the TSP-mD with a retraversing vector $h$, we can conclude that the percentage increase in the completion time due to not allowing arc-retraversing solutions satisfies the following inequality:
\begin{equation}
\label{percentage_decrease}
\frac{\text{m-CIRCUIT}-\text{TSP-mD}}{\text{TSP-mD}}\; \leq\;\frac{\left(\sum_{a\in A}\left( h_{a}-1\right)^{+}\right)\cdot\left(\alpha -1\right)\cdot L}{\text{LB}},
\end{equation}
where LB is a lower bound for the TSP-mD, that can be easily pre-computed as follows:
\begin{prop}
\label{prop_LB}
With a single drone and $N^{\mathit{tr}}=N$, let $f\in N$ be the furthest node from the depot. Then, the quantity
\begin{equation}
\label{LB}
\mathrm{LB}\; =\;\mathrm{min}\left\lbrace \ell_{0i} + \ell_{0j}+\mathrm{max}\left\lbrace\ell_{ij},\, \ell'_{if}+\ell'_{fj}\right\rbrace\; :\; i,j\in N\text{ and }\left( i,f,j\right)\in P\right\rbrace
\end{equation}
is a lower bound on the objective value for the TSP-mD.
\end{prop}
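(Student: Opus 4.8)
The plan is to show that in every feasible solution the furthest node $f$ forces a completion time of at least $\mathrm{LB}$, splitting the argument according to whether $f$ is served by the truck or by the single drone. Fix any feasible solution $\mathcal{S}$ with completion time $T$. Since $N^{\mathit{tr}}=N$, node $f$ may be served either by the truck or by the drone, and I would bound $T$ from below in each case so that both bounds are at least $\mathrm{LB}$.

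First I would treat the drone case, which carries the main content. Suppose the sortie serving $f$ is launched while the truck sits at node $i$ and retrieved while the truck sits at node $j$. Reading off the timeline: the truck cannot reach $i$ before time $\ell_{0i}$ and, after retrieval at $j$, cannot return to the depot in less than $\ell_{0j}$; meanwhile, between launch and retrieval the truck traverses a sub-route from $i$ to $j$ of length at least $\ell_{ij}$, and the drone flies a path through $f$ of length at least $\ell'_{if}+\ell'_{fj}$, both by the triangle inequality for $\ell$ and for $\ell'$. Because (with a single drone) retrieval waits for both vehicles, these intervals are disjoint and I obtain $T \ge \ell_{0i}+\ell_{0j}+\max\{\ell_{ij},\,\ell'_{if}+\ell'_{fj}\}$. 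The crucial remaining step is to certify that $(i,j)$ is admissible in the minimisation defining $\mathrm{LB}$, i.e.\ that the single-customer sortie $(i,f,j)$ lies in $P$: I would show that deleting every served node other than $f$ from the original sortie only decreases the energy consumption~\eqref{energy_consumption}, since by the triangle inequality for $\ell'$ both the total flight length and the prefix length up to $f$ shrink, while all other non-negative payload terms vanish. Hence $(i,f,j)\in P$, and the displayed bound is at least $\mathrm{LB}$.

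Second, the truck case: if the truck itself serves $f$, its closed walk contains $f$, so its length is at least $\ell_{0f}+\ell_{f0}=2\ell_{0f}$ and therefore $T\ge 2\ell_{0f}$. To finish I would exhibit a feasible sortie realising exactly the value $2\ell_{0f}$ inside the formula, namely the degenerate loop $(f,f,f)$: it serves $f$, has zero energy consumption, and contributes $\ell_{0f}+\ell_{0f}+\max\{0,0\}=2\ell_{0f}$, so that $\mathrm{LB}\le 2\ell_{0f}\le T$. This also makes explicit the implicit hypothesis $f\in N^{\mathit{dr}}$, which is precisely what keeps the index set of the minimisation non-empty. Combining the two cases gives $T\ge\mathrm{LB}$ for every feasible solution, and in particular for the optimum.

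I expect the main obstacle to be the feasibility certification in the drone case rather than the timing inequality: one must verify carefully that contracting a multi-customer sortie to $(i,f,j)$ genuinely keeps it in $P$, so that the lower bound is compared against a minimisation over \emph{feasible} sorties and not over a larger, possibly infeasible, set. The secondary point to double-check is the interplay between the two cases---ensuring the truck-service value $2\ell_{0f}$ is itself dominated by $\mathrm{LB}$---which is exactly what the degenerate loop $(f,f,f)$ resolves and where the hypotheses $N^{\mathit{tr}}=N$ and $\alpha\ge 1$ are implicitly relied upon.
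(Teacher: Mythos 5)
Your proposal is correct and follows essentially the same argument as the paper: the same case split on whether $f$ is served by the truck or the drone, the same choice $i=j=f$ to cover the truck case, and the same three-segment timing bound $\ell_{0i}+\max\{\ell_{ij},\,\ell'_{if}+\ell'_{fj}\}+\ell_{j0}$ in the drone case. You are in fact more explicit than the paper on one point---verifying via the energy formula~\eqref{energy_consumption} that contracting a multi-customer sortie to $(i,f,j)$ keeps it in $P$---which the paper dispatches with a ``without loss of generality''.
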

\begin{proof}
The node $f$ must be visited by either the truck or the drone; hence, the completion time must be greater or equal than the minimum time it takes to visit $f$. If $f$ is visited by the truck in an optimal solution, then such a time is $2\ell_{0f}$, which is included in \eqref{LB} by choosing $i=j=f$. If $f$ is instead visited by the drone, without loss of generality, we only have to consider the feasible sorties of the form $\left( i,f,j\right)$ (with possibly $i=j=0$ that leads to the drone visiting $f$ directly from the depot). Indeed, sorties serving strictly more nodes cannot lead to a shorter time to visit $f$. Once a sortie of the form $\left( i,f,j\right)$ is used, the truck needs to traverse the arcs $\left( 0,i\right)$, $\left( i,j\right)$ and $\left( j,0\right)$; the drone is airborne while the truck travels along $\left( i,j\right)$, hence the quantity $\mathrm{max}\left\lbrace\ell_{ij},\, \ell'_{if}+\ell'_{fj}\right\rbrace$ in \eqref{LB}.

\end{proof}
Propositions \ref{saving_prop4} and \ref{prop_LB} provide an upper bound on the percentage increase of the completion time due to not allowing arc-retraversing solutions, with a single drone and $N^{\mathit{tr}}=N$. Consider, for example, the instance corresponding to Figures~\ref{fig:thrice_onlyonce_sol} and \ref{fig:thrice_sol}, which satisfies $N^{\mathit{tr}}\neq N$ and $\alpha=1$. In Section~\ref{time-saving_subsection}, we have shown that, in this instance, the percentage increase of the completion time is at least $\frac{1084.09-1050.36}{1050.36}=3.21\%$. If we instead set $N^{\mathit{tr}}=N$ and $\alpha =\frac{4}{3}$, like for Figures~\ref{fig:twice_sol} and \ref{fig:once_sol}, we can show that, by inequality~\eqref{percentage_decrease}, the percentage increase is at most $\frac{3\cdot\nicefrac{1}{3}\cdot 43.04}{2\cdot 228.09+410}=4.97\%$. Indeed, because of the metric $\ell'$ and the parameter $L$, the truck must visit nodes F and I in any feasible solution. Consequently, the minimum amount of time it takes to visit F and I, namely, $2\cdot 228.09+410$, is a lower bound on the objective value of the TSP-mD.

\section{Approximability results for the TSP-mD}
\label{sec_approx}
Contrary to the well-known results for the metric TSP, we prove that the TSP-mD is not approximable within any constant factor. For the special case where the truck is allowed to visit all the nodes, we identify a (non-constant) approximation factor explicitly, via a result that relies on the Christofides heuristic for the metric TSP.

\subsection{Inapproximability within any constant factor}
\label{subsec_inapprox}
In this section, we restrict ourselves to the special case where the truck and the drone metrics are proportional, i.e., $\ell_{a}=\alpha\cdot\ell'_{a}$ for any arc $a\in A$, and $\alpha\geq 1$. In particular, it suffices to specify the metric~$\ell$ and the parameter $\alpha$ to also determine $\ell'$.
In this problem setting, we prove that the TSP-mD is not approximable within any constant factor, even when either $\alpha$ or $m$ is given. The result is based on a reduction to the TSP-mD of the Minimum Set Cover Problem, which is briefly defined below.
\medskip

\noindent\textbf{Definition.} Let $X$ be a finite set and $\mathcal{S}$ be a finite collection of subsets of $X$ such that \mbox{$\cup_{U\in\mathcal{S}}U = X$}. The Minimum Set Cover Problem (MSC) is the problem of finding \begin{equation}\text{min}\!\left\lbrace \left|\mathcal{S}'\right|\;\; :\;\; \mathcal{S}'\subseteq\mathcal{S}\text{ and }\cup_{U\in\mathcal{S}'}U = X\right\rbrace.\end{equation}
\medskip

\noindent Unless P=NP, the MSC is well known not to be approximable within a factor $c\cdot\ln\!\left(\left| X\right|\right)$ via any polynomial-time heuristic algorithm, for any $c\in\left( 0,1\right)$. In particular, the MSC is not approximable within any constant factor \citep{Dinur14}.

\begin{prop}
\label{prop_inapprox}
Unless P=NP, no polynomial-time heuristic algorithm can approximate the \mbox{TSP-mD} with either $\alpha$ or $m$ given within a constant factor.
\end{prop}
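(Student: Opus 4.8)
The plan is to reduce the Minimum Set Cover Problem (MSC) to the TSP-mD, so that a hypothetical polynomial-time constant-factor approximation for the latter would yield a polynomial-time constant-factor approximation for the former, contradicting the inapproximability of MSC \citep{Dinur14} unless P=NP. Given an MSC instance $(X,\mathcal{S})$, I would build a TSP-mD instance with one \emph{set node} $s_U$ for each $U\in\mathcal{S}$, one \emph{element node} for each $x\in X$, and the depot $0$. Crucially, I would take $N^{\mathit{tr}}=\{0\}\cup\{s_U:U\in\mathcal{S}\}$ and $N^{\mathit{dr}}=X$, so that the truck cannot reach the element nodes and every element must be served by a drone sortie; this is precisely the regime \emph{excluded} by the approximation result of the next subsection (where $N^{\mathit{tr}}=N$), and it is what makes the problem hard. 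I would fix a single drone ($m=1$) and proportional metrics with a prescribed speedup $\alpha\ge 1$; since the construction uses constant values of both parameters, the resulting hardness holds whether $\alpha$ or $m$ is regarded as given, which is exactly the statement.

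The heart of the construction is to choose the arc lengths and the battery budget $L$ so that \textbf{reachability equals membership}: a feasible sortie serving an element node $x$ can be launched and retrieved only at a set node $s_U$ with $x\in U$. I would do this by placing the element nodes so that $\ell'_{s_U x}$ is small precisely when $x\in U$, and otherwise large enough that any sortie visiting $x$ from a non-containing set node exceeds $L$, while keeping each set's own elements jointly servable within $L$. Under this calibration, the set nodes at which the drone is launched in any feasible solution form a set cover of $X$, and conversely any set cover yields a feasible solution. The second design goal is that the completion time be \emph{proportional} to the number of distinct launch set nodes (hence to the cover size), with no large additive term: I would arrange that visiting each additional set node costs the truck a fixed amount from below, and that the total drone-service time is comparable to, and not dominating, the truck travel, so that $\text{TSP-mD-OPT}=\Theta(\text{MSC-OPT})$.

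Granting this, the reduction is completed as follows. Run the assumed $\rho$-approximation on the constructed instance, and read off the set nodes from which its sorties are launched; by the reachability-equals-membership property these form a cover, and by the proportionality its size is $O(\rho)\cdot\text{MSC-OPT}$. Thus a polynomial-time constant-factor approximation for the TSP-mD would give one for MSC, which is impossible unless P=NP \citep{Dinur14}. Since a single fixed value of $\alpha$ and $m=1$ suffice, the same family of instances establishes hardness both when $\alpha$ is fixed and when $m$ is fixed.

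I expect the main obstacle to be exactly the metric calibration of the second paragraph. The triangle inequality tends to pull together the set nodes that share an element (each such node must lie close to all of them), which in turn threatens to make an element of one set cheaply reachable from a different, non-containing set node, breaking the correspondence between sorties and sets; simultaneously, a battery large enough to serve a big set tends to be large enough to reach outside it. Reconciling these — embedding the (necessarily unbounded-size) hard MSC instances into a metric with a single uniform $L$ so that feasible sorties correspond exactly to sets, while the completion time remains proportional to, rather than additively dominated by, the cover size — is the delicate step, and the one on which the proof will stand or fall. Verifying that the chosen lengths genuinely form a metric with $\ell=\alpha\,\ell'$, and that the argument is insensitive to the fixed values of $\alpha$ and $m$, are the remaining routine checks.
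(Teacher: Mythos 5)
Your overall strategy---encode MSC with one node per set and one per element, extract a cover from the launch nodes of an approximate TSP-mD solution, and invoke the inapproximability of MSC from \cite{Dinur14}---is the same as the paper's, and your extraction and final contradiction steps match the paper's steps (ii) and (v). But the plan as stated has a fatal flaw: you fix \emph{both} $m=1$ and $\alpha$ to constants and assert that this covers both cases of the statement. The proposition only claims hardness when \emph{one} of the two parameters is fixed, and the paper's proof is built around exactly this: step (iii) takes $m=\left| X\right|+\left|\mathcal{S}\right|$ (growing with the instance) when $\alpha$ is given, and step (iv) takes $\alpha=\nicefrac{\left(\left| X\right|+\left|\mathcal{S}\right|\right)}{\epsilon}$ when $m$ is given. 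This is not an accident, and your design goal $\text{TSP-mD}\!\left( I^{X,\mathcal{S}}\right)=\Theta\!\left(\text{MSC}\!\left( X,\mathcal{S}\right)\right)$ is unattainable with $m=1$ and $\alpha=O(1)$. With a single drone the sorties are flown sequentially, so the completion time is at least the total drone flying time, and every element node lies in $N\smallsetminus N^{\mathit{tr}}$, so each must be reached by some sortie. The membership calibration you describe requires that a sortie launched at $s_{U'}$, which can legally reach some $y\in U'$, cannot continue to a nearby $x\notin U'$; since proportionality (consider $\text{MSC}=1$) forces the elements to be packed with pairwise drone-distance $O\!\left(\nicefrac{1}{\left| X\right|}\right)$ relative to the gadget scale, this is only possible if reaching any single element essentially exhausts the battery---which then also forbids any sortie from serving two separated elements. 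Hence at least $\left| X\right|$ sorties of duration at least $L$ each are needed, giving $\text{TSP-mD}\geq\left| X\right| L$, while the set-node separation you need for the bound ``cover size $\leq O(V)$'' is at most $\alpha L$ (two set nodes sharing an element are both within sortie reach of it). When $\text{MSC}=1$ these collide: proportionality forces $L\leq\nicefrac{C}{\left| X\right|}$, which shrinks the set-node separation to $O\!\left(\nicefrac{\alpha}{\left| X\right|}\right)$ and destroys the extraction bound. The only escapes are the paper's: let $m$ grow so the $\left| X\right|$ sorties run in parallel, or let $\alpha$ grow so each sortie takes negligible time.

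A secondary issue is that you leave the metric calibration---which you correctly identify as the crux---unresolved, so even in the regime where the reduction can work this is a plan rather than a proof. The paper's resolution is simple and worth internalizing: $\ell_{0i_{S}}=\ell_{i_{S}j_{x}}=\nicefrac{1}{2}$ for $x\in S$, shortest-path closure for all other arcs, and $L=\nicefrac{1}{\alpha}$, so that every feasible sortie consists of exactly two hops of truck-length $\nicefrac{1}{2}$ and can therefore serve an element $j_{x}$ only between set nodes $i_{S}$ with $x\in S$. Note also that the paper keeps $N^{\mathit{tr}}=N^{\mathit{dr}}=N$, which yields a stronger statement than your restricted-truck variant (and is used in the remark following the proposition); the only price is that the extraction step must additionally handle element nodes visited by the truck directly.
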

\begin{proof}
The proof consists of five steps, which we enumerate as follows:

\noindent \textbf{i.} for every MSC instance $\left( X, \mathcal{S}\right)$, $\alpha\geq 1$ and $m$, we can construct an instance $I^{X,\mathcal{S}}$ for the TSP-mD in polynomial time;

\noindent \textbf{ii.} for every $m$ and every $\alpha\geq 1$, if there exists a polynomial-time heuristic for the TSP-mD, then there exists a polynomial-time heuristic for the MSC such that for any MSC instance $\left( X, \mathcal{S}\right)$ it holds that
    \begin{equation}
    \text{HEUR}_{\text{MSC}}\!\left( X, \mathcal{S}\right)\leq\text{HEUR}_{\text{TSP-mD}}\!\left( I^{X,\mathcal{S}}\right);
    \end{equation}
    
\noindent \textbf{iii.} for every MSC instance $\left( X, \mathcal{S}\right)$ and for every $\alpha\geq 1$, there exist an $m$ such that
    \begin{equation}
    \text{TSP-mD}\!\left( I^{X,\mathcal{S}}\right)\leq 2\cdot\text{MSC}\!\left( X, \mathcal{S}\right) ;
    \end{equation}
    
\noindent \textbf{iv.} for every MSC instance $\left( X, \mathcal{S}\right)$, every $\epsilon >0$ and every $m\geq 1$, there exist an $\alpha\geq 1$ such that
    \begin{equation}
    \text{TSP-mD}\!\left( I^{X,\mathcal{S}}\right)\leq \left( 1+\epsilon\right)\cdot\text{MSC}\!\left( X, \mathcal{S}\right) ;
    \end{equation}
    
\noindent \textbf{v.} by contradiction: for both the cases where either $\alpha\geq 1$ or $m$ is given, if there existed a polynomial-time heuristic that approximates the TSP-mD within a constant factor, then the same would apply to the MSC, contradicting its inapproximability properties.

\medskip
\noindent\emph{Proof of} \textbf{i.} Let $\left( X, \mathcal{S}\right)$ be an instance for the MSC. We construct an instance $I^{X,\mathcal{S}}$ for the TSP-mD as follows. The set $N$ of nodes consists of the depot $0$, a node $i_{S}$ for every $S\in\mathcal{S}$ and a node $j_{x}$ for every $x\in X$. We set the length of the arc $\ell_{0i_{S}}=\nicefrac{1}{2}$ for every $S\in\mathcal{S}$ and $\ell_{i_{S}j_{x}}=\nicefrac{1}{2}$ for every $S\in\mathcal{S}$ and every $x\in S$. All the other arcs have a length equal to the shortest path between their endpoints. We set $N^{\mathit{dr}}=N^{\mathit{tr}}=N$, $w_{i}=0$ for every $i\in N\smallsetminus\left\lbrace 0\right\rbrace$, and $L=\nicefrac{1}{\alpha}$. Figure~\ref{fig:inapprox} illustrates the construction of $I^{X,\mathcal{S}}$.

\medskip
\noindent\emph{Proof of} \textbf{ii.} Let $\text{HEUR}_{\text{TSP-mD}}$ be a polynomial-time heuristic for the TSP-mD. Given an MSC instance $\left( X, \mathcal{S}\right)$, we construct a feasible MSC solution $\mathcal{S}'$ as follows. We compute $\text{HEUR}_{\text{TSP-mD}}\!\left( I^{X,\mathcal{S}}\right)$. Then, for every $S\in\mathcal{S}$, we include $S$ in $\mathcal{S}'$ if the corresponding node $i_{S}$ is visited by the truck. For all $x\in X$, if $j_{x}$ is visited by the truck, we arbitrarily choose a set $S\ni x$; if $S$ was not yet included in $\mathcal{S}'$, then we include it in $\mathcal{S}'$. By construction, for all $x\in X$ we have included at least one set $S\in\mathcal{S}$ such that $x\in S$. Moreover, because the length of the arcs satisfies the triangle inequality, it holds that $\left|\mathcal{S}'\right|\leq\text{HEUR}_{\text{TSP-mD}}\!\left( I^{X,\mathcal{S}}\right)$. This construction defines a polynomial-time heuristic for the MSC with the required property.

\medskip
\noindent\emph{Proof of} \textbf{iii.} Let $\alpha\geq 1$ be given and let $\left( X, \mathcal{S}\right)$ be an instance for the MSC. We choose $m=\left| X\right| + \left|\mathcal{S}\right|$. Moreover, let $\mathcal{S'}\subseteq\mathcal{S}$ be an optimal sub-collection for the MSC, and let $I^{X,\mathcal{S}}$ be the TSP-mD instance from step (i.). We construct a feasible solution for the TSP-mD as follows. The truck travels $\left|\mathcal{S}'\right|$ times back and forth from the depot to the nodes $i_{S}$, for all $S\in\mathcal{S'}$. Every time the truck reaches a node $i_{S}$, a drone is launched onto a sortie $\left( i_{S},j_{x},i_{S}\right)$ for every $x\in S$, while the truck stops and waits. At the same time, for every set $S\in\mathcal{S}\smallsetminus\mathcal{S}'$, we launch one drone onto sortie $\left( 0,i_{S},0\right)$. This solution is feasible and leads to an objective value of $\left( 1+\frac{1}{\alpha}\right)\left(\text{MSC}\!\left( X, \mathcal{S}\right)\right)\leq 2\cdot\text{MSC}\!\left( X, \mathcal{S}\right)$.

\medskip
\noindent\emph{Proof of} \textbf{iv.} Let $m\geq 1$ and $\epsilon >0$ be given, and let $\left( X, \mathcal{S}\right)$ be an instance for the MSC. We now choose $\alpha=\frac{\left| X\right| + \left|\mathcal{S}\right|}{\epsilon}$. With a construction similar to that of step (iii.), we get a feasible solution for the TSP-mD that uses only one drone and leads to an objective value $\frac{1}{\alpha}\left(\left|\mathcal{S}\right|-\text{MSC}\!\left( X, \mathcal{S}\right)\right) +\text{MSC}\!\left( X, \mathcal{S}\right) +\frac{\left| X\right|}{\alpha}\leq \epsilon+\text{MSC}\!\left( X, \mathcal{S}\right)\leq\left( 1+\epsilon\right)\cdot\text{MSC}\!\left( X, \mathcal{S}\right)$.

\medskip
\noindent\emph{Proof of} \textbf{v.} This step is analogous for both the cases where either $\alpha\geq 1$ or $m$ is given. If a polynomial-time heuristic for the TSP-mD and a constant $k>1$ existed such that $\text{HEUR}_{\text{TSP-mD}}\leq k\text{TSP-mD}$ for all the instances, then the following chain of inequalities would hold by all the previous steps, for any MSC instance $\left( X, \mathcal{S}\right)$:
\begin{equation}
\text{HEUR}_{\text{MSC}}\!\left( X, \mathcal{S}\right)\leq\text{HEUR}_{\text{TSP-mD}}\!\left( I^{X,\mathcal{S}}\right)\leq k\text{TSP-mD}\!\left( I^{X,\mathcal{S}}\right)\leq 2k\text{MSC}\!\left( X, \mathcal{S}\right) .
\end{equation}
\end{proof}

Notice that, because of the construction of the TSP-mD instance described in step (i.) of the proof, Proposition~\ref{prop_inapprox} also holds when $N=N^{\mathit{tr}}=N^{\mathit{dr}}$. The same reduction of the MSC can be used to derive a further double-logarithmic factor inapproximability for the TSP-mD.

\begin{figure}
    \centering
\begin{tikzpicture}[scale=0.18,transform shape]
  \tikzset{VertexStyle/.append style={scale=2.8}}
  \Vertex[x=-20.0,y=0.0,L={0}]{A}
  
  \Vertex[x=0.0,y=15.0,L={$i_{1}$}]{B}
  \Vertex[x=0.0,y=10.0,L={$i_{2}$}]{C}
  \Vertex[x=0.0,y=5.0,L={$\textcolor{white}{.}$}]{D}
  \Vertex[x=0.0,y=0.0,L={$\textcolor{white}{.}$}]{E}
  \Vertex[x=0.0,y=-5.0,L={$\textcolor{white}{.}$}]{F}
  \Vertex[x=0.0,y=-10.0,L={$\textcolor{white}{.}$}]{G}
  \Vertex[x=0.0,y=-15.0,L={$i_{\left|\mathcal{S}\right|}$}]{H}

  \Vertex[x=20.0,y=10.0,L={$j_{1}$}]{I}
  \Vertex[x=20.0,y=5.0,L={$j_{2}$}]{J}
  \Vertex[x=20.0,y=0.0,L={$\textcolor{white}{.}$}]{K}
  \Vertex[x=20.0,y=-5.0,L={$\textcolor{white}{.}$}]{L}
  \Vertex[x=20.0,y=-10.0,L={$j_{\left| X\right|}$}]{M}
  
  \Vertex[x=-20.0,y=20.0,L={$0$}]{N}
  \Vertex[x=0.0,y=20.0,L={$\mathcal{S}$}]{O}
  \Vertex[x=20.0,y=20.0,L={$X$}]{P}
  
  \tikzstyle{LabelStyle}=[scale=0.75,fill=white]
  \tikzstyle{EdgeStyle}=[scale=10, <->]
  \Edge[label=$\frac{1}{2}$](N)(O)
  
  \tikzstyle{EdgeStyle}=[scale=10, <->, dashed]
  \Edge[label=$\frac{1}{2}$](O)(P)
  
  \tikzstyle{EdgeStyle}=[scale=10]
  \Edge(A)(B)
  \Edge(A)(C)
  \Edge(A)(D)
  \Edge(A)(E)
  \Edge(A)(F)
  \Edge(A)(G)
  \Edge(A)(H)
  
  \tikzstyle{EdgeStyle}=[post, scale=10, dashed]
  \Edge(B)(I)
  \Edge(B)(K)
  \Edge(B)(M)
  \Edge(C)(I)
  \Edge(C)(J)
  \Edge(H)(L)
  \Edge(H)(M)
\end{tikzpicture}
\caption{Illustration of the TSP-mD instance $I^{\left( X, \mathcal{S}\right)}$ in step (i.) of the proof of Proposition~\ref{prop_inapprox}.}
    \label{fig:inapprox}
\end{figure}
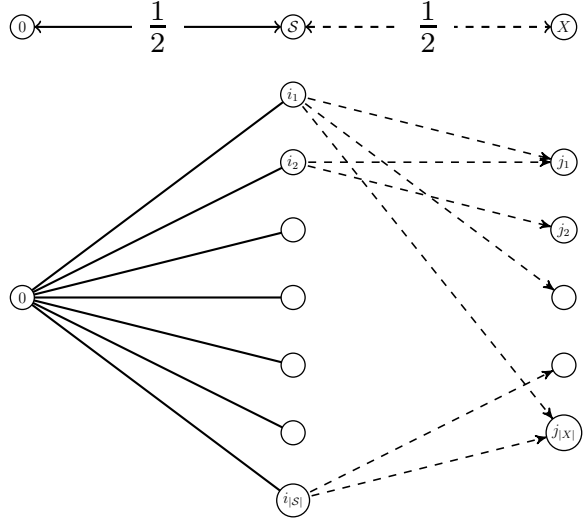

\begin{cor}
Unless P=NP, the TSP-mD is not approximable within a factor of $c\cdot\ln\ln\left(\left| N\right|\right)$, for any instance with $N$ as set of nodes and any $c\in\left( 0,\frac{1}{2}\right)$.
\end{cor}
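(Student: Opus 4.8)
The plan is to reuse verbatim the polynomial-time reduction $I^{X,\mathcal{S}}$ from the proof of Proposition~\ref{prop_inapprox}, together with the two inequalities it already supplies: $\text{HEUR}_{\text{MSC}}(X,\mathcal{S}) \le \text{HEUR}_{\text{TSP-mD}}(I^{X,\mathcal{S}})$ from step~(ii.), and $\text{TSP-mD}(I^{X,\mathcal{S}}) \le 2\cdot\text{MSC}(X,\mathcal{S})$ from step~(iii.) (with $m=\lvert X\rvert+\lvert\mathcal{S}\rvert$). First I would argue by contradiction: assume that for some $c\in(0,\tfrac12)$ there is a polynomial-time heuristic satisfying $\text{HEUR}_{\text{TSP-mD}} \le c\ln\ln(\lvert N\rvert)\cdot\text{TSP-mD}$ on every instance. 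Chaining the three inequalities yields, for every MSC instance, $\text{HEUR}_{\text{MSC}}(X,\mathcal{S}) \le 2c\ln\ln(\lvert N\rvert)\cdot\text{MSC}(X,\mathcal{S})$, where $N$ is the node set of $I^{X,\mathcal{S}}$; since $\lvert N\rvert = 1+\lvert\mathcal{S}\rvert+\lvert X\rvert$ is linear in the size of the MSC instance, this induced MSC heuristic runs in polynomial time.

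The crux is to convert the factor $\ln\ln(\lvert N\rvert)$ into a factor in $\ln(\lvert X\rvert)$ that beats the MSC inapproximability threshold. Here I would invoke the only bound that holds for an arbitrary set family, namely $\lvert\mathcal{S}\rvert\le 2^{\lvert X\rvert}$, which gives $\lvert N\rvert \le 2^{\lvert X\rvert+1}$ and therefore $\ln\ln(\lvert N\rvert) \le \ln\bigl((\lvert X\rvert+1)\ln 2\bigr) = \ln(\lvert X\rvert) + O(1)$. This single estimate is exactly what downgrades the guarantee from logarithmic to \emph{double}-logarithmic, and it is also where the threshold $c<\tfrac12$ enters: combined with the factor~$2$ lost in step~(iii.), the induced MSC ratio obeys $2c\ln\ln(\lvert N\rvert) \le 2c\ln(\lvert X\rvert)+O(1)$, and the condition $2c<1$ is precisely what lets the additive $O(1)$ be absorbed into a multiplicative factor strictly below $\ln(\lvert X\rvert)$.

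Finally I would close the contradiction. Fixing any $c'$ with $2c<c'<1$, the previous estimate gives $2c\ln\ln(\lvert N\rvert) \le c'\ln(\lvert X\rvert)$ for all sufficiently large $\lvert X\rvert$, so the induced heuristic approximates MSC within $c'\ln(\lvert X\rvert)$ in polynomial time, contradicting the fact that MSC admits no polynomial-time $c'\ln(\lvert X\rvert)$-approximation for $c'\in(0,1)$ unless P=NP \citep{Dinur14}. The main obstacle I anticipate is bookkeeping rather than conceptual: one must confirm that the crude exponential comparison $\lvert N\rvert\le 2^{\lvert X\rvert+1}$ is the correct tool (it yields $\ln\ln(\lvert N\rvert)\le\ln(\lvert X\rvert)+O(1)$ \emph{uniformly} over all instances, hence the clean constant $\tfrac12$), while checking that the reduction itself stays polynomial in $\lvert N\rvert$ despite this exponential bound — which it does, since $\lvert N\rvert$ is only linear in $\lvert\mathcal{S}\rvert+\lvert X\rvert$ and the exponential inequality is used purely in the analysis, never in the construction.
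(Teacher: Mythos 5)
Your proof is correct and follows essentially the same route as the paper: reuse the reduction $I^{X,\mathcal{S}}$ and the chain of inequalities from Proposition~\ref{prop_inapprox}, then use $\left|\mathcal{S}\right|\leq 2^{\left| X\right|}$ to convert the factor $\ln\ln\!\left(\left| N\right|\right)$ into roughly $\ln\!\left(\left| X\right|\right)$ and contradict the set-cover inapproximability threshold. The only (immaterial) difference is bookkeeping: the paper invokes step~(iv.) with its $\left(1+\epsilon\right)$ factor and charges the factor $2$ to the slightly loose estimate $\ln\ln\!\left(2^{2\left| X\right|}\right)\leq 2\ln\!\left(\left| X\right|\right)$, whereas you invoke step~(iii.) with its factor $2$ and keep the logarithm estimate tight up to an additive constant absorbed asymptotically --- both land on the same threshold $c<\frac{1}{2}$.
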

\begin{proof}For every MSC instance $\left( X,\mathcal{S}\right)$ with $\left| X\right|\geq 2$, it holds that $\left| \mathcal{S}\right|\leq 2^{\left| X\right|}$ and
\begin{equation}
c\cdot\ln\ln\!\left( 1+\left| X\right| + \left|\mathcal{S}\right|\right)\leq c\cdot\ln\ln\left( 2^{2\left| X\right|}\right)\leq 2c\cdot\ln\!\left(\left| X\right|\right) + c\cdot\ln\ln 2\leq 2c\cdot\ln\!\left(\left| X\right|\right).
\end{equation}
Suppose that a polynomial-time heuristic for the TSP-mD existed that approximates it within a factor \mbox{$c\cdot\ln\ln\!\left(\left| N\right|\right)$} for $c\in\left( 0,\frac{1}{2}\right)$. Then, analogously to step (iv.) in the proof of Proposition~\ref{prop_inapprox}, it holds that 
\begin{equation}
\begin{split}
\text{HEUR}_{\text{MSC}}\!\left( X,\mathcal{S}\right)&\leq\text{HEUR}_{\text{TSP-mD}}\!\left( I^{X,\mathcal{S}}\right)\leq c\cdot\ln\ln\!\left( 1+\left| X\right| + \left|\mathcal{S}\right|\right)\text{TSP-mD}\!\left( I^{X,\mathcal{S}}\right)\leq \\&\leq 2c\cdot\ln\!\left(\left| X\right|\right)\text{TSP-mD}\!\left( I^{X,\mathcal{S}}\right)\leq 2c\left( 1+\epsilon\right)\cdot\ln\!\left(\left| X\right|\right)\text{MSC}\!\left(X,\mathcal{S}\right),
\end{split}
\end{equation}
for every $\epsilon >0$, which contradicts the approximability properties of the MSC.\end{proof}

\subsection{An approximation within a non-constant factor}
\label{subsec_approx}
If the truck can visit all the nodes, i.e., $N^{\text{tr}}=N$, then it holds that $\text{TSP}\leq \left( 1+\alpha m\right)\text{TSP-mD}$. The proof can be found in~\cite{wang17}. This bound is tight, i.e., there exist instances for which the bound is satisfied to equality. Notice that when $N^{\text{tr}}=N$, a Hamiltonian cycle induces a feasible solution for the TSP-mD; this fact leads us to establish the following (non-constant) approximation factor:

\begin{prop}
\label{christof}
The TSP-mD is approximable within a $\frac{3}{2}\left( 1+\alpha m\right)$ factor in polynomial time if the truck can visit all the nodes.
\end{prop}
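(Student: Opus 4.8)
The plan is to compose the Christofides heuristic for the metric TSP with the bound $\text{TSP}\leq\left( 1+\alpha m\right)\text{TSP-mD}$ recalled at the start of Section~\ref{subsec_approx}. First I would observe that, since $\ell$ is a metric (hence symmetric and satisfying the triangle inequality) on the complete graph $G$, the Christofides heuristic applies directly to $G$ with edge lengths $\ell$ and produces, in polynomial time, a Hamiltonian cycle $H$ through all nodes of $N$ whose length $\ell\!\left( H\right)$ satisfies $\ell\!\left( H\right)\leq\frac{3}{2}\text{TSP}$.

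Next I would turn $H$ into a feasible TSP-mD solution. Because $N^{\mathit{tr}}=N$, the truck is permitted to visit every customer, so I let the truck traverse $H$ as a closed walk through the depot $0$ and keep all $m$ drones grounded on the truck for the entire operation. This is feasible: every customer is served by the truck, no sortie is flown (so the battery constraint~\eqref{energy_consumption} is vacuously met), and the truck route is a closed walk based at $0$. Its completion time equals $\ell\!\left( H\right)$, since the truck drives the cycle at unit speed with no waiting and no drone operations to synchronize. Denoting by $\text{HEUR}_{\text{TSP-mD}}$ the value returned by this construction, we thus have $\text{HEUR}_{\text{TSP-mD}}=\ell\!\left( H\right)\leq\frac{3}{2}\text{TSP}$.

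Finally I would chain the inequalities. Invoking $\text{TSP}\leq\left( 1+\alpha m\right)\text{TSP-mD}$ gives
\[
\text{HEUR}_{\text{TSP-mD}}\;\leq\;\tfrac{3}{2}\text{TSP}\;\leq\;\tfrac{3}{2}\left( 1+\alpha m\right)\text{TSP-mD},
\]
which is exactly the claimed guarantee; the whole procedure is polynomial because Christofides is.

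I do not expect any genuine obstacle, as the argument is essentially a one-line composition of the two cited facts. The only point requiring care is the verification that a Hamiltonian cycle genuinely induces a \emph{feasible} TSP-mD solution whose completion time equals its length, which is precisely where the hypothesis $N^{\mathit{tr}}=N$ enters: without it the truck could not legally visit every node, and running Christofides on $\ell$ over all of $N$ would not yield an admissible route in the first place.
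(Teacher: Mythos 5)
Your proposal is correct and is essentially identical to the paper's argument: the authors likewise note that when $N^{\mathit{tr}}=N$ a Hamiltonian cycle induces a feasible TSP-mD solution, apply Christofides to obtain a $\frac{3}{2}$-approximate tour, and chain this with the bound $\text{TSP}\leq\left(1+\alpha m\right)\text{TSP-mD}$ from \cite{wang17}. You merely spell out the feasibility verification that the paper leaves implicit.
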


The proof follows from the fact that the Christofides' heuristic is well known to produce a $\frac{3}{2}$~approximation cycle for the TSP. Notice that, by Proposition~\ref{prop_inapprox}, we cannot replace $\alpha$ or $m$ by a constant in this approximation factor.

\section{Conclusion}
\label{sec_conclusion}
In this work, we have introduced arc-retraversing solutions for the TSP with Drones, and showed via two Euclidean instances that ignoring them may increase the completion time. On the other hand, we have identified conditions, which are commonly met in the drone routing literature, under which it suffices to consider solutions that are not arc-retraversing. By excluding arc retraversals when these conditions are not satisfied, the optimal value may increase; we have described cases where \emph{a~priori} and \emph{a~posteriori} upper bounds on such increase hold. Furthermore, we have shown that no polynomial-time heuristic algorithm exists that approximates the metric TSP-mD within a constant factor, unless P=NP. We have explicitly found a non-constant approximation factor for the special case without restrictions on the truck route.

Improvements on our work might be possible by extending the results of Section~\ref{results_subsec} to the multiple drones case; the extension, however, does not appear to immediately follow from any results or ideas of this paper. Potential future work might include the development of a branch-and-price algorithm where the columns correspond to feasible drone operations, and a classification of the instances in the literature where all the optimal solutions are arc-retraversing.

\section*{Acknowledgments}
The authors wish to thank Ben Hermans (KU Leuven) and Ivana Ljubi\'c (ESSEC Business School, Paris) for their valuable suggestions on a draft of this work.

\appendix
\section{Description of the instances}
\label{app_inst}
An instance of the TSP-mD is defined by the distance matrices representing the metrics $\ell$ and $\ell'$, by the drone weight $w^{\mathit{dr}}$ and the payload of the nodes, by the parameters $m$ and $B$, and by the subsets of nodes $N^{\mathit{tr}}$ and $N^{\mathit{dr}}$. If the metrics are proportional, $\ell'$ is completely determined by $\ell$ and the parameter $\alpha$; if the payloads are null, it suffices to specify $L$ instead of the parameters $w^{\mathit{dr}}$ and $B$.
\subsection{Instance of Figure~\ref{fig:feas_sol}}
\label{subapp_feas_sol}
\begin{table}[!ht]
    \centering
    \tiny
    \renewcommand{\arraystretch}{2}
    \begin{tabular}{ccccccccccc}
        $\ell$ & A & B & C & D & E & F & G & H & I & J \\ \hline
        A & 0 & 13.89 & 21.02 & 32.56 & 17.2 & 14.14 & 11.4 & 26.42 & 22.02 & 23.09 \\ \hline
        B & 13.89 & 0 & 12.37 & 19.21 & 31.06 & 22.2 & 16.76 & 22.83 & 11.66 & 24.21 \\ \hline
        C & 21.02 & 12.37 & 0 & 15.3 & 37.01 & 21.02 & 28.07 & 34.93 & 22.2 & 16.28 \\ \hline
        D & 32.56 & 19.21 & 15.3 & 0 & 49.68 & 36.06 & 35.36 & 35.01 & 21.1 & 31 \\ \hline
        E & 17.2 & 31.06 & 37.01 & 49.68 & 0 & 20.4 & 21.02 & 37.12 & 37.64 & 32.76 \\ \hline
        F & 14.14 & 22.2 & 21.02 & 36.06 & 20.4 & 0 & 25.5 & 40.22 & 33.24 & 12.37 \\ \hline
        G & 11.4 & 16.76 & 28.07 & 35.36 & 21.02 & 25.5 & 0 & 16.49 & 18.03 & 34.01 \\ \hline
        H & 26.42 & 22.83 & 34.93 & 35.01 & 37.12 & 40.22 & 16.49 & 0 & 14.04 & 46.1 \\ \hline
        I & 22.02 & 11.66 & 22.2 & 21.1 & 37.64 & 33.24 & 18.03 & 14.04 & 0 & 35.81 \\ \hline
        J & 23.09 & 24.21 & 16.28 & 31 & 32.76 & 12.37 & 34.01 & 46.1 & 35.81 & 0 \\ \hline
    \end{tabular}
    \caption{Matrix representing the metric $\ell$ of the instance shown in Figure~\ref{fig:feas_sol}.}
    \label{tab:feas_sol}
\end{table}
In Table~\ref{tab:feas_sol}, we show the distance matrix corresponding to the metric $\ell$ in the instance of Figure~\ref{fig:feas_sol}. The instance is further defined by $m=3$, proportional metrics with $\alpha=\frac{4}{3}$, $L=40.00$, $N=N^{\mathit{dr}}=N^{\mathit{tr}}$ and all-zero payloads.

\subsection{Instance of Figures~\ref{fig:thrice_onlyonce_sol} and \ref{fig:thrice_sol}}
\label{subapp_thrice_onlyonce_sol}
\begin{table}[!ht]
    \centering
    \tiny
    \setlength{\tabcolsep}{4.5pt}
    \renewcommand{\arraystretch}{2}
    \begin{tabular}{cccccccccccccccc}
        $\ell$ & A & B & C & D & E & F & G & H & I & J & K & L & M & N & O \\ \hline
        A & 0 & 79.15 & 79.15 & 84.18 & 84.18 & 228.09 & 100.13 & 100.13 & 228.09 & 109.01 & 109.01 & 116.13 & 116.13 & 120.85 & 120.85 \\ \hline
        B & 79.15 & 0 & 0.2 & 7.26 & 7.41 & 205.96 & 21.42 & 21.46 & 206.16 & 29.87 & 29.88 & 37.24 & 37.27 & 41.7 & 41.7 \\ \hline
        C & 79.15 & 0.2 & 0 & 7.41 & 7.26 & 206.16 & 21.46 & 21.42 & 205.96 & 29.88 & 29.87 & 37.27 & 37.24 & 41.7 & 41.7 \\ \hline
        D & 84.18 & 7.26 & 7.41 & 0 & 11 & 200.14 & 16.01 & 19.14 & 211.11 & 25.35 & 25.91 & 32 & 33.84 & 37.24 & 37.27 \\ \hline
        E & 84.18 & 7.41 & 7.26 & 11 & 0 & 211.11 & 19.14 & 16.01 & 200.14 & 25.91 & 25.35 & 33.84 & 32 & 37.27 & 37.24 \\ \hline
        F & 228.09 & 205.96 & 206.16 & 200.14 & 211.11 & 0 & 200 & 210 & 410 & 203.9 & 206.5 & 200.14 & 211.11 & 205.96 & 206.16 \\ \hline
        G & 100.13 & 21.42 & 21.46 & 16.01 & 19.14 & 200 & 0 & 10 & 210 & 9.73 & 10.99 & 16.01 & 19.14 & 21.42 & 21.46 \\ \hline
        H & 100.13 & 21.46 & 21.42 & 19.14 & 16.01 & 210 & 10 & 0 & 200 & 10.99 & 9.73 & 19.14 & 16.01 & 21.46 & 21.42 \\ \hline
        I & 228.09 & 206.16 & 205.96 & 211.11 & 200.14 & 410 & 210 & 200 & 0 & 206.5 & 203.9 & 211.11 & 200.14 & 206.16 & 205.96 \\ \hline
        J & 109.01 & 29.87 & 29.88 & 25.35 & 25.91 & 203.9 & 9.73 & 10.99 & 206.5 & 0 & 2.6 & 8.16 & 9.76 & 11.91 & 11.93 \\ \hline
        K & 109.01 & 29.88 & 29.87 & 25.91 & 25.35 & 206.5 & 10.99 & 9.73 & 203.9 & 2.6 & 0 & 9.76 & 8.16 & 11.93 & 11.91 \\ \hline
        L & 116.13 & 37.24 & 37.27 & 32 & 33.84 & 200.14 & 16.01 & 19.14 & 211.11 & 8.16 & 9.76 & 0 & 11 & 7.26 & 7.41 \\ \hline
        M & 116.13 & 37.27 & 37.24 & 33.84 & 32 & 211.11 & 19.14 & 16.01 & 200.14 & 9.76 & 8.16 & 11 & 0 & 7.41 & 7.26 \\ \hline
        N & 120.85 & 41.7 & 41.7 & 37.24 & 37.27 & 205.96 & 21.42 & 21.46 & 206.16 & 11.91 & 11.93 & 7.26 & 7.41 & 0 & 0.2 \\ \hline
        O & 120.85 & 41.7 & 41.7 & 37.27 & 37.24 & 206.16 & 21.46 & 21.42 & 205.96 & 11.93 & 11.91 & 7.41 & 7.26 & 0.2 & 0 \\ \hline
    \end{tabular}
    \caption{Matrix representing the metric $\ell$ of the instance shown in Figures~\ref{fig:thrice_onlyonce_sol} and \ref{fig:thrice_sol}.}
    \label{tab:thrice_onlyonce_sol}
\end{table}
In Table~\ref{tab:thrice_onlyonce_sol}, we show the distance matrix corresponding to the metric $\ell$ of the instance shown in Figures~\ref{fig:thrice_onlyonce_sol} and~\ref{fig:thrice_sol}. The instance is further defined by $m=1$, proportional metrics with $\alpha=1$, $L=43.04$, $N^{\mathit{tr}}=\left\lbrace A,F,G,H, I\right\rbrace$ and $N^{\mathit{dr}}=\left\lbrace B,C,D,E,J,K,L,M,N,O\right\rbrace$, and all-zero payloads.

\subsection{Instance of Figures~\ref{fig:twice_sol} and \ref{fig:once_sol}}
\label{subapp_twice_sol}
\begin{table}[!ht]
    \centering
    \tiny
    \renewcommand{\arraystretch}{2}
    \begin{tabular}{cccccc}
        $\ell$ & A & B & C & D & E \\ \hline
        A & 0 & 10.00 & 10.00 & 24.00 & 24.00\\ \hline
        B & 10.00 & 0 & 14.14 & 14.00 & 26.00\\ \hline
        C & 10.00 & 14.14 & 0 & 26.00 & 14.00\\ \hline
        D & 24.00 & 14.00 & 26.00 & 0 & 33.94\\ \hline
        E & 24.00 & 26.00 & 14.00 & 33.94& 0\\ \hline
    \end{tabular}
    \caption{Matrix representing the metric $\ell$ of the instance shown in Figures~\ref{fig:twice_sol} and \ref{fig:once_sol}.}
    \label{tab:twice_sol}
\end{table}
In Table~\ref{tab:twice_sol}, we show the distance matrix corresponding to the metric $\ell$ of the instance shown in Figures~\ref{fig:twice_sol} and \ref{fig:once_sol}. The instance is further defined by $m=2$, proportional metrics with $\alpha=\frac{4}{3}$, $L=21.00$, $N=N^{\mathit{dr}}=N^{\mathit{tr}}$ and all-zero payloads.\medskip

\section{MILP model}
\label{app_model}
The proofs of Propositions~\ref{saving_prop1} and \ref{saving_prop2} are complete if we provide an MILP model for the TSP-mD: we propose one based on the formulation for the Time-Dependent TSP (TDTSP) by~\cite{Picard78}. In the TDTSP, the cost of using an arc  depends on its position in the route. Similarly, we describe the truck route $\pi$ of a TSP-mD solution as a sequence of (possibly repeated) arcs in $N\times N=A\cup\left\lbrace\left( i,i\right)\; : \; i\in N\right\rbrace$, where the arcs of the form $\left\lbrace\left( i,i\right)\; : \; i\in N\right\rbrace$ represent the truck waiting at a node $i\in N$ for a drone. Furthermore, we describe a sortie as a sequence of nodes and a couple of positive numbers $t_{1}\leq t_{2}$; the nodes of the sortie represent (in the order) the starting, the served and the ending locations, while $t_{1}$ and $t_{2}$ correspond to the positions in the truck route of the arcs that are traversed immediately after launching and right before retrieving the relevant drone, respectively. We refer to $t_{1}$ and $t_{2}$ as starting and ending positions of the sortie, respectively. For example, in the solution shown in Figure~\ref{fig:feas_sol}, the sortie of drone 2 starts in position $2$ and ends in position $3$, because the truck traverses the second arc of its route immediately after launching, and the third arc right before retrieving the drone. Analogously, the sortie of drone 3 starts in position $1$ and ends in position $4$.

We prove that we can solve the TSP-mD by only considering truck routes that contain at most $2\cdot\left| N\right|$ arcs.

\noindent\textbf{Proposition B.1} \emph{There always exists an optimal solution whose truck route contains at most $2\cdot\left| N\right|$ arcs in $N\times N$ and such that no two solution sorties have the same ending position.}

\medskip
\begin{proof}
Let $\left( i,j\right)\in N\times N$ be an arc in an optimal truck route $\pi$. If node~$j$ was not previously visited, then the truck traversal of $\left( i,j\right)$ contributes to visit at least one new node. If instead node~$j$ was already visited, then, by Lemma~\ref{lemma1}, we can assume that there is at least one drone that either lands or takes off at~$j$ immediately after the truck has traversed $\left( i,j\right)$. Thus, the truck traversal of $\left( i,j\right)$ on average contributes to visit half a new node, because every sortie has two endpoints and serves at least one node. Hence, there is always an optimal solution $\mathcal{S}$ whose truck route contains at most $2\cdot\left| N\right|$ arcs.

Notice that, given an optimal solution, we can always construct an optimal solution such that no two solution sorties contain the same ending position. Indeed, if multiple drones are planned to land in the same position, it suffices to expand the truck route with further arcs in $N\times N\smallsetminus A$: we then get another optimal solution, to which the argument above still applies, because by Lemma~\ref{lemma1} there still is at least one sortie starting or ending at every revisit of a node.
\end{proof}

Requiring that no two solution sorties have the same ending position is crucial to simplify our MILP formulation. We denote the position of an arc in a truck route by $t\in \left\lbrace 1, \ldots, T\right\rbrace$, with $T=2\cdot\left| N\right|$. We define a \emph{drone operation} $h$ as a $4$-tuple $\left( d, \pi, t^{1}, t^{2}\right)$ made by a drone $d\in \{1, \ldots, m\}$, a feasible sortie $\pi\in P$, a starting position $t^{1}\in\left\lbrace 1, \ldots, T\right\rbrace$, and an ending position $t^{2}\in\left\lbrace t^{1}, \ldots, T\right\rbrace$. The set of all the drone operations is denoted by $H$.

Our decision variables can be described as follows. For every arc $a\in N\times N$ and for every position $t\in\left\lbrace 1, \ldots, T\right\rbrace$, the binary variable $x_{at}$ is $1$ if and only if arc $a$ is in position $t$ in the truck route. For every position $t\in\left\lbrace 1, \ldots, T\right\rbrace$, the continuous variable $w_{t}\geq 0$ represents the amount of time the truck waits for a drone at the destination node of the arc in position $t$. For every drone operation $h\in H$, the binary variable $z_{h}$ is $1$ if and only if $h$ is performed in the solution. We assume that $N=N^{\mathit{dr}}=N^{\mathit{tr}}$; if not, it is easy to impose the following constraints:
\begin{flalign}
\sum_{t\in\left\lbrace 1, \ldots, T\right\rbrace}\;\sum_{j\in N\smallsetminus\left\lbrace i\right\rbrace}x_{ijt}\leq 0 && \forall i\in N\smallsetminus N^{\mathit{tr}},
\end{flalign}
\begin{flalign}
\sum_{t\in\left\lbrace 1, \ldots, T\right\rbrace}\;\sum_{i\in N\smallsetminus\left\lbrace j\right\rbrace}x_{ijt}\geq 1 &&\forall j\in N\smallsetminus N^{\mathit{dr}}.
\end{flalign}

For sake of conciseness, we slightly abuse the notation of a number of subsets of $H$; this never gives rise to ambiguities, because we always denote drones and starting and ending positions by the symbols $d$, $t_{1}$ and $t_{2}$, respectively. Table \ref{tab:symbols} provides a description of every such subset of $H$.

\begin{table}[ht]
\footnotesize
\centering
\begin{tabular}{ll}
\multicolumn{2}{l}{\textbf{Subsets of $H$}}\\\hline
$H^{k}$ & set of drone operations in $H$ whose sorties are in $P^{k}$ with $k\in N^{\mathit{dr}}$\\
$H_{t^{1}\cdot}$ & set of drone operations in $H$ that start at period $t^{1}$\\
$H_{\cdot t^{2}}$ & set of drone operations in $H$ that end at period $t^{2}$\\
$H_{t^{1} t^{2}}$ & intersection of sets $H_{t^{1}\cdot}$ and $H_{\cdot t^{2}}$\\
$H_{d, t^{1} t^{2}}$ & set of drone operations in $H_{t^{1}t^{2}}$ performed by drone $d$\\
$H_{t^{1}\cdot,i\cdot}$ & set of drone operations in $H_{t^{1}\cdot}$ whose sorties start at node $i\in N^{\mathit{tr}}$\\
$H_{\cdot t^{2},\cdot j}$ & set of drone operations in $H_{\cdot t^{2}}$ whose sorties end at node $j\in N^{\mathit{tr}}$
\end{tabular}
\caption{Description of the subsets of $H$ that are relevant to our formulation of the TSP-mD.}
\label{tab:symbols}
\end{table}

{\small
\begin{lpformulation}[]
\lpobj{min}{\label{obj}\sum_{t\in\left\lbrace 1, \ldots, T\right\rbrace}\left( w_{t}\; +\; \sum_{a\in A}\ell_{a}x_{at}\right)}
\lpeq{\label{onearcpertime}\sum_{a\in N\times N}\; x_{at} \leq 1&&}{t\in\left\lbrace 1, \ldots, T\right\rbrace}
\lpeq{\label{TSP2}\sum_{j\in N}x_{ij1}=\begin{cases*}1 & if $i=0$\\ 0 & otherwise\end{cases*}&&}{i\in N}
\smallskip
\lpeq{\label{TSP3}\sum_{j\in N}x_{ijt}\leq\sum_{j\in N}x_{ji,t-1}&&}{i\in N,\;\;\forall t\in\left\lbrace 2, \ldots, T\right\rbrace}
\lpeq{\label{TSPend}\sum_{i\in N}\;\sum_{j\in N\smallsetminus\left\lbrace 0\right\rbrace} x_{ijt}\leq\begin{cases*}\sum_{a\in N\times N}\; x_{a, t+1}& if $t\neq T$\\ 0 & otherwise\end{cases*}&&}{t\!\in\!\left\lbrace 1, \ldots, T\right\rbrace}
\smallskip
\lpeq{\label{u4}\sum_{t^{1}\in\left\lbrace 1, \ldots, t\right\rbrace}\;\sum_{t^2\in\left\lbrace t, \ldots, T\right\rbrace}\;\sum_{h\in H_{d,t^{1}t^{2}}}z_{h}\leq \sum_{a\in N\times N}x_{at}&&}{d\in D,\;\;\forall t\in\left\lbrace 1, \ldots, T\right\rbrace}
\medskip
\lpeq{\label{u5}\sum_{h\in H_{t^{1}\cdot,i\cdot}}z_{h}\leq m\cdot\sum_{j\in N}x_{ijt^{1}}&&}{i\in N,\;\;\forall t^{1}\in\left\lbrace 1, \ldots, T\right\rbrace}
\lpeq{\label{u6}\sum_{h\in H_{\cdot t^{2},\cdot j}}z_{h}\leq \sum_{i\in N}x_{ijt^{2}}&&}{j\in N,\;\;\forall t^{2}\in\left\lbrace 1, \ldots, T\right\rbrace}
\lpeq{\label{u7}\sum_{t\in\left\lbrace 1, \ldots, T\right\rbrace}\;\sum_{j\in N}x_{kjt}\; +\;\sum_{h\in H^{k}}\; z_{h}\geq 1&&}{k\in N}
\lpeq{\label{u8}\sum_{t\in\left\lbrace t^{1}, \ldots, t^{2}\right\rbrace} \left( w_{t}+\sum_{a\in A}\ell_{a}x_{at}\right)\geq\sum_{h\in H_{t^{1}t^{2}}}\ell'_{h}z_{h}&&}{t^{1}\!\in\!\left\lbrace 1, \ldots, T\right\rbrace,\;\;\forall t^{2}\!\in\!\left\lbrace t^{1}, \ldots, T\right\rbrace}
\lpeq{x_{at}\in\left\lbrace 0,1\right\rbrace&&}{a\in A,\;\;\forall \left\lbrace 1, \ldots, T\right\rbrace}
\lpeq{z_{h}\in\left\lbrace 0,1\right\rbrace&&}{h\in H}
\lpeq{w_{t}\geq 0&&}{t\in\left\lbrace 1, \ldots, T\right\rbrace}
\end{lpformulation}
}

The objective function~\eqref{obj} quantifies the completion time. Constraints~\eqref{onearcpertime} impose that at most one arc in $N\times N$ can be in position $t\in\left\lbrace 1, \ldots, T\right\rbrace$ in the truck route. Constraints~\eqref{TSP2} force that, in position $1$ of the truck route, the depot has one outgoing arc, while all the other nodes $i\in N\smallsetminus\left\lbrace 0\right\rbrace$ do not have any. For all the positions $t\in\left\lbrace 2, \ldots, T\right\rbrace$, by constraints~\eqref{TSP3}, the total outgoing flow of the $x$ variables from a given node $i\in N$ in $t$ is smaller or equal to the total inflow into $i$ in position $t-1$. For any position $t\in\left\lbrace 1, \ldots, T-1\right\rbrace$, if the truck route does not have any arc in position $t+1$, then, by constraints~\eqref{TSPend}, in position $t$ either the truck does not move or it travels directly to the depot. Constraints~\eqref{u4} make sure that, if the truck route actually contains an arc in position $t\in\left\lbrace 1, \ldots, T\right\rbrace$, every drone $d\in D$ is performing at most one sortie. Constraints~\eqref{u5} and~\eqref{u6} force a sortie to start and end where and when the truck is located. We further impose that at most one drone can land in any position $t\in\left\lbrace 1, \ldots, T\right\rbrace$; by Proposition~\ref{app_model}.1, this condition is met by at least one optimal solution, and at the same time it simplifies constraints~\eqref{u8}, whose right-hand side can now be expressed as a summation over the drone operation index $h$. Constraints \eqref{u7} make sure that all the nodes are visited by either the truck or a drone. Finally, constraints~\eqref{u8} synchronize the truck and the drones by giving to the variables $w$ their meaning as per definition, with $\ell'_{h}$ denoting the length of the sortie in operation $h\in H$.

\end{document}